\documentclass[12pt]{article}

\usepackage[dvipsnames]{xcolor}
\usepackage{xspace}
\usepackage{colortbl}
\usepackage[utf8]{inputenc} 
\usepackage[T1]{fontenc}    
\usepackage{url}            
\usepackage{booktabs}       
\usepackage{amsfonts}       
\usepackage{nicefrac}       
\usepackage{microtype}      
\usepackage{xcolor}         
\usepackage{tabularx}
\usepackage{tikz}
\usepackage{arxiv_macros}
\usetikzlibrary{arrows.meta}
\definecolor{lightgreen}{RGB}{220, 245, 220}
\definecolor{PineGreen}{rgb}{0.0,0.47,0.44} 
\newcommand{\algname}[1]{{\color{PineGreen}\sf#1}\xspace}
\newcommand{\lalgname}[2]{%
  \begingroup
  \hypersetup{linkcolor=PineGreen}%
  \hyperref[#2]{\color{PineGreen}\sf#1}%
  \endgroup
  \xspace
}
\newcommand\inprd[2]{\left\langle #1,#2\right\rangle}
\newcommand{\p}[1]{\left( #1 \right)}
\newcommand{\set}[1]{\left\{#1\right\}}

\newcommand{\ugamma}[0]{\underline{\gamma}}
\newcommand{\overF}[0]{\overline{F}}
\newcommand{\xhatk}[0]{\hat{x}_k}
\newcommand{\supg}[0]{\sup_{y\in B}}
\newcommand{\xhat}[0]{\hat{x}}
\newcommand{\ifaone}[0]{\text{ if $\alpha = 1$ }}
\newcommand{\ifasmall}[0]{\text{ if $\alpha \in (0,1)$ }}
\newcommand{\ceil}[1]{\left\lceil #1 \right\rceil}
\newcommand{\floor}[1]{\left\lfloor #1 \right\rfloor}

\newcommand{\leqfrom}[1]{\overset{(#1)}{\leq}}
\newcommand{\geqfrom}[1]{\overset{(#1)}{\geq}}
\newcommand{\eqfrom}[1]{\overset{(#1)}{=}}
\newcommand{\gap}[1]{\texttt{Gap}_{F,B}\left( #1\right)}

\newcommand{\xbark}[0]{\bar{x}_K}
\newcommand{\half}[0]{\frac{1}{2}}
\newcommand{\mink}[0]{\min_{0\leq k\leq K}}
\newcommand{\eps}[0]{\varepsilon}

\newcommand{\eg}[0]{\lalgname{EG}{EG_update}}
\newcommand{\popov}[0]{\lalgname{PEG}{PEG_update}}
\newcommand{\adaeg}[0]{\algname{EG-LS}}
\newcommand{\adapeg}[0]{\algname{PEG-LS}}
\newcommand{\pegls}[0]{\algname{(P)EG-LS}}
\newcommand{\peg}[0]{\lalgname{PEG}{PEG_update}}
\newcommand{\bigo}[1]{\mathcal{O}\left(#1\right)}

\newcommand{\B}[0]{\mathbf{B}}

\newcommand{\J}[0]{\mathbf{J}}
\newcommand{\gapp}[0]{\texttt{Gap}_P}
\newcommand{\shrink}[2]{\resizebox{#1\linewidth}{!}{$#2$}}
\newcommand{\lamz}[0]{\bar{\lambda}_N^0}
\newcommand{\lamo}[0]{\bar{\lambda}_N^1}

\title{\textbf{Extragradient and Past Extragradient Methods \\for $(L_0,L_1)$-Lipschitz Variational Inequalities}}

\begin{document}

\author{ Ezra Greenberg and  Nicolas Loizou }

\author{
    Ezra Greenberg\\
    \texttt{egreen70@jhu.edu} 
    \and 
    Nicolas Loizou\\
    \texttt{nloizou@jhu.edu}
}

\date{
    \vspace{1em}
    Mathematical Institute for Data Science (MINDS),\\
    Department of Applied Mathematics and Statistics,\\ Johns Hopkins University\\
    \vspace{0.5em}
}

\maketitle

\begin{abstract}
      Variational inequalities (VIs) capture a broad class of optimization problems central to modern machine learning, including min-max optimization and equilibrium computation in multiplayer games. Traditionally, algorithms for solving VIs, such as Extragradient (EG) and Past Extragradient (PEG), have been analyzed under the standard $L$-Lipschitz assumption. However, this condition is often overly restrictive, and recent works have instead turned to the more relaxed $(L_0,L_1)$-Lipschitz setting. Existing guarantees for $(L_0,L_1)$-Lipschitz VIs require a quasi-sharpness condition [Vankov et al., 2024], while monotone guarantees for EG cover only unconstrained root finding [Choudhury and Loizou, 2025].
   In this work, we provide the first convergence guarantees for EG and PEG for monotone $(L_0,L_1)$-Lipschitz VIs. 
    We also improve upon prior convergence guarantees for strongly monotone VIs by supporting larger stepsizes in the constrained setting and eliminating exponential dependence on the initial distance from the solution for convergence rates in the unconstrained setting. Lastly, we propose line-search variants for when Lipschitz parameters are unknown.
   Numerical experiments complement our theoretical results and demonstrate the practical efficacy of our methods. 
 \end{abstract}
 \newpage
 \tableofcontents

\newpage
\section{Introduction}\label{sec: introduction}
Variational inequalities (VIs) have increasingly attracted interest for their ability to model diverse machine learning applications, such as reinforcement learning \citep{sokota2023a,sidahmed2025addressing}, static mean-field games \citep{yardim2025}, and generative adversarial networks \citep{goodfellow14,gidel2018a}. The VI problem is defined as
\vspace{-2mm}
\begin{equation}
    \text{find $x^*\in U$ s.t. $\inprd{F(x^*)}{x-x^*}\geq 0$ $\forall x\in U$},\label{VIP Solution}
\end{equation}
where $U\subset \R^d$ is closed and convex and $F:U\rightarrow \R^d$ is a continuous operator. In the unconstrained setting, the VI problem simplifies to the root-finding problem of finding $x^*$ such that $F(x^*)=0$~\citep{sgda_consensus21, pmlr-v151-gorbunov22b}.  In this work, we assume that the solution set of the VI problem \eqref{VIP Solution} is nonempty ($U^* \neq \emptyset$).

Problem~\eqref{VIP Solution} covers a wide array of optimization tasks based on the choice of operator $F$~\cite{facchinei_finite-dimensional_2003}. For example, taking $F(x)=\nabla f(x)$ for a continuously
differentiable function $f$ yields the first-order necessary
optimality conditions for the constrained \textbf{minimization}
problem $\min_{x\in U}f(x)$. When $f$ is convex, the VI solutions
are exactly the global minimizers. 
For \textbf{min-max optimization}, let
$\mathcal{L}:U_1\times U_2\to\R$ be continuously differentiable,
convex in $w_1$, and concave in $w_2$, with $U_1$ and $U_2$
nonempty, closed, and convex. On $U=U_1\times U_2$, solving the VI
with the saddle-point operator $F=\begin{pmatrix}
    \nabla_{w_1}\mathcal{L}^\top, -\nabla_{w_2}\mathcal {L}^\top
\end{pmatrix}^\top$ characterizes the saddle points of $\mathcal{L}$. 
Finally, consider an $n$-player game in which player $i$
chooses a strategy $x_i\in U_i$, where each $U_i$ is
nonempty, closed, and convex. In this setting, each player minimizes a continuously differentiable
loss $\mathcal{L}_i(x_i,x_{-i})$ that is convex in $x_i$
for every fixed $x_{-i}$.
The game's \textbf{Nash equilibria} are exactly the solutions
of the VI with the joint-gradient operator $F=\begin{pmatrix}
    \nabla_{x_1} \mathcal{L}_1(x_1,x_{-1})^\top, \cdots, \nabla_{x_n} \mathcal{L}_n(x_n,x_{-n})^\top
\end{pmatrix}^\top$.

\textbf{Problem Classes and Algorithms.} In this work we focus on two main classes of VIs: \textit{monotone} and \textit{strongly monotone}.
\begin{assumption}
    $F:U\rightarrow\R^d$ is $\mu$-strongly monotone if there exists some $\mu >0$ such that
    \begin{equation}
        \inprd{F(x)-F(y)}{x-y}\geq \mu\|x-y\|^2\hspace{0.5cm}\forall x,y\in U
        \label{muSC}
    \end{equation}
    When $\mu=0$, we call $F$ monotone.
\end{assumption}
A $\mu$-strongly monotone operator has a unique solution, while a monotone operator has a convex solution set $U^*$ \citep{facchinei_finite-dimensional_2003}.
Notably, even for simple monotone problems, the analogue to the gradient descent method for VIs, the Projection Method, $x_{k+1} = P_U(x_k-\gamma_k F(x_k))$, provably diverges~\citep{gidel2018a}. Instead, one must turn to the Extragradient Method~\citep{Korpelevich76}:
\vspace{-2mm}
\begin{equation}
\label{EG_update}
\begin{aligned}
    \xhatk &= P_U(x_k-\gamma_kF(x_k)) \notag\\
    x_{k+1} &= P_U(x_k-\gamma_k F(\xhatk)) 
\end{aligned}\tag{\algname{EG}}
\end{equation}
where $P_U$ is a projection to the constraint set. 
More recently, convergence guarantees have been established
for extragradient-type methods applied to structured
non-monotone VIs~\citep{diakonikolas21a_weak_mvi_eg,
choudhury2023singlecall}, as well as for
stochastic~\citep{pmlr-v108-mishchenko20a,
pmlr-v151-gorbunov22b, yoon2026same} and
distributed~\citep{beznosikov_decentralized_seg22,
beznosikov_distributed_2025} variants.

Closely related to \eg is Past Extragradient \citep{popov1980,gidel2018a}, which reduces the number of operator evaluations at each iteration by reusing computations from the previous iterate:
\vspace{-4mm}
\begin{equation}
\label{PEG_update}
    \begin{aligned}
    \xhatk&=P_U(x_k-\gamma_kF(\xhat_{k-1}))\\
    x_{k+1} &= P_U(x_k-\gamma_k F(\xhatk)) 
\end{aligned}\tag{\algname{PEG}}
\end{equation}
where $\xhat_{-1}=x_0\in U$. 

Traditionally, both \eg and \popov have been analyzed under the $L$-Lipschitz assumption, 
\vspace{-2mm}
\begin{equation}
    \|F(x)-F(y)\|\leq L\|x-y\|. \label{eqn:l lipschitz}
\end{equation}
More recently however, research in relaxing this assumption has greatly accelerated, catalyzed by the proposal of the $(L_0,L_1)$-smooth assumption in the minimization setting \citep{Zhang2020Why}. 
There, \cite{Zhang2020Why} show that along the trajectories of gradient descent algorithms for training neural networks, the spectral norm of the Hessian $\|\nabla^2f\|$  tends to scale with the norm of the gradient $\|\nabla f\|$, leading to the $(L_0,L_1)$-smoothness assumption, $\|\nabla^2 f\|\leq L_0+L_1\|\nabla f\|$. The use of this assumption provided the first theoretical justification for gradient norms/clipping, where stepsizes are inversely proportional to $\|\nabla f\|$~\cite{Zhang2020Why,gorbunov2025methods}.

For VIs, similar relationships between the Jacobian norm
$\|\J(x)\|$ and the norm $\|F(x)\|$ have been observed along
\eg trajectories in unconstrained root-finding
problems~\citep{choudhury2025extragradient}.
In Appendix~\ref{app: l0l1 trajectory} we present analogous
trajectory plots for \eg and \peg in the constrained setting~\eqref{VIP Solution}.
These observations motivate our study of VIs under the
$\alpha$-symmetric $(L_0,L_1)$-Lipschitz
assumption~\citep{pmlr-v235-vankov24a}.
\begin{assumption}\label{assmpt: l0 l1}
    An operator $F$ is $\alpha$-symmetric $(L_0, L_1)$-Lipschitz if for some $L_0, L_1 \geq 0$ and $\alpha \in (0, 1]$, and for all $x,y\in U$,
    \vspace{-2mm}
    \begin{equation}\label{l0l1 defn}
    \begin{aligned}
        \| F(x) - F(y)\| \leq  
        ( L_0 + L_1 \max_{\theta \in [0, 1]} \left\|F(\theta x + ( 1- \theta) y) \right\|^{\alpha}) \| x- y\|.
    \end{aligned}
    \end{equation}
\end{assumption}
This assumption allows the effective Lipschitz constant to grow with the operator norm. In the case of a differentiable operator $F$, \eqref{l0l1 defn}  is equivalent to $\|\J(x)\|\leq L_0+L_1\|F(x)\|^\alpha$ ~\citep{choudhury2025extragradient}. 
\cite{pmlr-v235-vankov24a} provided the first convergence guarantees for \eg and \peg in the $(L_0,L_1)$-Lipschitz setting for $p$-quasi sharp operators, a class of structured non-monotone operators which include strongly monotone operators as a special case. Notably, $p$-quasi sharpness is a strong enough assumption that the simple projection method can solve all $p$-quasi sharp VIs, despite the method diverging for monotone problems. For root-finding problems, \cite{choudhury2025extragradient} improved on the stepsizes and convergence guarantees of~\cite{pmlr-v235-vankov24a} for \eg in the strongly monotone setting and provided the first monotone convergence guarantees. However, convergence guarantees for \eg and \popov for monotone $(L_0,L_1)$-Lipschitz VIs remain an open question. Additionally, while \cite{choudhury2025extragradient} improved stepsizes and convergence guarantees for \eg compared to \cite{pmlr-v235-vankov24a}, similar results for \peg have not been attempted previously.

\subsection{Main Contributions}

Our contributions are summarized as follows.

   \begin{table*}[t]
\centering
\small
\begin{threeparttable}
\caption{Summary of Convergence guarantees for $\alpha$-symmetric
$(L_0,L_1)$-Lipschitz VIs.}
\label{tab:comparison_of_rates}

\renewcommand{\arraystretch}{1.1}
\setlength{\tabcolsep}{4pt}
\renewcommand{\tabularxcolumn}[1]{m{#1}}

\begin{tabularx}{\textwidth}{
    @{}
    >{\raggedright\arraybackslash}m{2.8cm}
    >{\centering\arraybackslash}m{1.5cm}
    *{3}{>{\centering\arraybackslash}X}
    >{\centering\arraybackslash}m{3.4cm}
    @{}
}
\toprule
\textbf{Work}
& \textbf{Methods}
& \textbf{\shortstack{Strongly\\monotone}}\tnote{1}
& \textbf{Monotone}\tnote{2}
& \textbf{Constrained}
& \textbf{\shortstack{No exponential\\dependence on $D_0$}}
\\
\midrule
\cite{pmlr-v235-vankov24a}
& \eg, \peg
& \cmark
& \ysquare\tnote{3}
& \cmark
& \xmark
\\
\cite{choudhury2025extragradient}
& \eg
& \cmark
& \cmark
& \xmark
& \ysquare\tnote{4}
\\
\rowcolor{lightgreen}
This work
& \eg, \peg
& \cmark
& \cmark
& \cmark
& \cmark
\\
\bottomrule
\end{tabularx}

\begin{tablenotes}[flushleft]
\footnotesize
\setlength{\itemsep}{0pt}
\setlength{\parsep}{0pt}

\item[] $D_0=\|x_0-x^*\|$; the last column concerns
unconstrained VIs with $\alpha=1$.

\item[1] Metric: $\|x_K-x^*\|^2$.

\item[2] Metrics: $\min_k\|F(x_k)\|^2$ (unconstrained);
restricted gap $\texttt{Gap}(\bar{x}_K)$ (constrained).

\item[3] Monotone cases require $p$-quasi sharpness:
$\inprd{F(x)}{x-x^*}\ge
\mu\,\operatorname{dist}^p(x,U^*)$.
This does not cover all monotone operators.
Metric: $\operatorname{dist}^2(x,U^*)$.

\item[4] No exponential dependence for the monotone
(sublinear) guarantee only.
\end{tablenotes}

\end{threeparttable}
\end{table*}
       \textbf{Convergence guarantees for monotone $(L_0,L_1)$-Lipschitz VIs:} We provide the first convergence guarantees for \eg and \popov
on monotone $\alpha$-symmetric
$(L_0,L_1)$-Lipschitz VIs
(Theorems~\ref{thm: EG monotone}
and~\ref{thm: popov monotone}).
       For both methods we show $\bigo{1/K}$ convergence of the restricted gap function. For \popov, our results also yield the first convergence
guarantees for monotone $(L_0,L_1)$-Lipschitz root-finding problems (unconstrained setting).

      \textbf{Convergence guarantees for strongly monotone $(L_0,L_1)$-Lipschitz VIs:}
We establish linear convergence of \eg and \popov
for strongly monotone $\alpha$-symmetric
$(L_0,L_1)$-Lipschitz VIs
(Theorems~\ref{thm: EG sm} and~\ref{thm: popov sm}),
using stepsizes that do not require knowledge of
the strong monotonicity parameter $\mu$.
For unconstrained root-finding problems with $\alpha=1$,
we further derive iteration complexity bounds for both
methods without exponential dependence on the initial
distance to the solution
(Corollaries~\ref{cor: eg usm} and~\ref{cor: popov sm}).

\textbf{Line-search variants with unknown Lipschitz parameters:}
For the scenarios that $L_0$ or $L_1$ parameters are unkown or hard to pre-compute we propose \adaeg and \adapeg, which are line-search variants of \eg and \peg for solving $(L_0,L_1)$-Lipschitz VIs without requiring knowledge
of $L_0$ or $L_1$ (Algorithm~\ref{alg:AdaptiveAlgo}).
For both methods, we establish $\bigo{1/K}$ convergence
for monotone VIs and linear convergence for strongly
monotone VIs, without requiring knowledge of $\mu$.
We prove that the line-search always terminates successfully and that the total number of while-loop iterations over the entire run is finite.
     
     \textbf{Numerical Experiments}: We provide numerical experiments validating our theoretical guarantees. We compare our stepsizes with existing alternatives, demonstrating the strong performance of our approach.
\vspace{-4mm}
\section{On $(L_0,L_1)$-Lipschitz: Setting and Stepsizes}\label{sec:stepsizes}

Let us provide additional details on the $(L_0,L_1)$-Lipschitz assumption~\eqref{l0l1 defn} and its use for the design of novel stepsizes for \ref{EG_update} and \ref{PEG_update}.

Essential for our convergence analysis is the following result from \cite{pmlr-v202-chen23ar,choudhury2025extragradient}.
\begin{proposition}[\cite{choudhury2025extragradient}]\label{lem: l0l1 smooth}
    Suppose $F$ is $\alpha$-symmetric $(L_0,L_1)$-Lipschitz. Then if $\alpha=1$
    \begin{equation}
    \begin{aligned}
         \|F(x)&-F(y)\| \leq (L_0+L_1\|F(y)\|)e^{L_1\|x-y\|}\|x-y\|
        \label{a1 l0l1}
    \end{aligned}
    \end{equation}
    and if $\alpha\in (0,1)$,
    \begin{equation}
    \begin{aligned}
        \|&F(x)-F(y)\| 
        \leq (K_0+K_1\|F(y)\|^\alpha + K_2\|x-y\|^{p_\alpha})\|x-y\|
    \end{aligned}
        \label{a small l0l1}
    \end{equation}
    where $p_\alpha=\nicefrac{\alpha}{1-\alpha}$, $K_0=L_0(2^{\alpha p_\alpha}+1)$, $K_1=L_1 2^{\alpha p_\alpha}3^\alpha$ and $K_2=L_1^{p_\alpha/\alpha}2^{\alpha p_\alpha}3^\alpha(1-\alpha)^{p_\alpha}$.
\end{proposition}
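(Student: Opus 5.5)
The plan is a Grönwall-type argument along the segment from $y$ to $x$. Fix $x,y\in U$ and set $z(t)=y+t(x-y)$ for $t\in[0,1]$; convexity of $U$ gives $z(t)\in U$. Write $D=\|x-y\|$ and $\phi(t)=\|F(z(t))\|$.

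\textbf{Step 1 (local Lipschitz control).} For $0\le s<t\le 1$, apply \eqref{l0l1 defn} to the pair $z(t),z(s)$. This gives
\[
|\phi(t)-\phi(s)|\le \bigl(L_0+L_1 M(s,t)^\alpha\bigr)(t-s)D,
\qquad M(s,t)=\max_{\tau\in[s,t]}\phi(\tau).
\]
Let $M(t)=\max_{\tau\in[0,t]}\phi(\tau)$. The function $M$ is nondecreasing and absolutely continuous, since $\phi$ is locally Lipschitz by the display. It satisfies the differential inequality $M'(t)\le (L_0+L_1M(t)^\alpha)D$ almost everywhere. The reason is that where $M$ increases we have $M=\phi$ there, and $\phi$'s upper Dini derivative is bounded by the right-hand side. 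I will state this as a comparison lemma rather than fuss with Dini derivatives.

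\textbf{Step 2 (integrate the inequality).} For $\alpha=1$ the inequality is linear: $(M+L_0/L_1)'\le L_1D\,(M+L_0/L_1)$. Grönwall then gives
\[
L_0+L_1M(t)\le (L_0+L_1\|F(y)\|)e^{L_1Dt}.
\]
(The case $L_1=0$ is trivial.) Plugging $t=1$ into \eqref{l0l1 defn} gives only $(L_0+L_1\|F(y)\|)e^{L_1D}D$, which is already \eqref{a1 l0l1}.

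For $\alpha\in(0,1)$, compare with the ODE $u'=(L_0+L_1u^\alpha)D$. I first bound $L_0+L_1u^\alpha\le 2\max(L_0,L_1u^\alpha)$. In the regime where the $L_1$ term dominates, $(u^{1-\alpha})'\le 2(1-\alpha)L_1D$, so
\[
M(1)^{1-\alpha}\le \|F(y)\|^{1-\alpha}+ c\,(1-\alpha)L_1D+\text{(an $L_0$ contribution)}.
\]
Raising to the power $\alpha/(1-\alpha)=p_\alpha$ to get $M(1)^\alpha$, I use $(a+b+c)^{p}\le 3^{p}\max(a^p,b^p,c^p)$ or a similar convexity/Jensen split. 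This yields
\[
M(1)^\alpha\lesssim \|F(y)\|^\alpha + \bigl((1-\alpha)L_1D\bigr)^{p_\alpha}+\text{const}\cdot L_0\text{-terms}.
\]
Multiplying by $L_1$ and adding $L_0$ produces the three terms $K_0$, $K_1\|F(y)\|^\alpha$ and $K_2D^{p_\alpha}$. Note $L_1\cdot L_1^{p_\alpha}=L_1^{p_\alpha/\alpha}$, which matches the form of $K_2$.

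\textbf{Main obstacle.} The hard part is matching the exact constants $2^{\alpha p_\alpha}$ and $3^\alpha$, and handling the $L_0$ term cleanly in the sublinear case. I would first try a case split on whether $L_0\le L_1M^\alpha$ along the trajectory. Where $L_0$ dominates, $M$ grows at most linearly, at rate $2L_0D$. I then absorb that contribution into $K_0$ using $M^\alpha\le$ the corresponding bound. If the constants come out differently, it suffices to note that the proposition is quoted from \cite{choudhury2025extragradient}, so any bound of the same form would serve downstream. I would nonetheless aim to reproduce their constants via the $3^\alpha$ split of a three-term sum raised to $\alpha$.
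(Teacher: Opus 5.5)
Your $\alpha=1$ argument is correct and complete. Step~1 gives $M(t+h)-M(t)\le (L_0+L_1M(t+h))hD$ for $h>0$, so $M$ is Lipschitz with $M'\le (L_0+L_1M)D$ a.e. Gr\"onwall then gives $L_0+L_1M(1)\le (L_0+L_1\|F(y)\|)e^{L_1D}$, and inserting this into \eqref{l0l1 defn} is exactly \eqref{a1 l0l1}. (The paper gives no proof of this proposition; it is imported from \cite{pmlr-v202-chen23ar,choudhury2025extragradient}.)

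The gap is the case $\alpha\in(0,1)$, which you only sketch, with unspecified constants. There the explicit $K_0,K_1,K_2$ are the content of the statement. The paper uses them verbatim: the stepsizes \eqref{EG Stepsizes} and \eqref{Popov stepsizes}, and the proof of Lemma~\ref{lem:lipschitz analogue}, are calibrated to exactly these constants. So falling back on ``any bound of the same form'' does not prove the proposition.

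Moreover, the device you propose cannot reach these constants. Replacing $L_0+L_1u^\alpha$ by $2\max(L_0,L_1u^\alpha)$ puts a factor $2$ into the growth rate of $M^{1-\alpha}$, which becomes $2^{p_\alpha}$ after raising to the power $p_\alpha$. For example, take $L_0=L_1\|F(y)\|^\alpha$ and $\|F(y)\|^{1-\alpha}=(1-\alpha)L_1D$. Then $L_1M^\alpha\ge L_0$ on all of $[0,1]$, and your route gives at best $M(1)^{1-\alpha}\le 3\|F(y)\|^{1-\alpha}$, i.e.\ $L_1M(1)^\alpha\le 3^{p_\alpha}L_1\|F(y)\|^\alpha$. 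But the right-hand side of \eqref{a small l0l1} leaves room only for $L_1M(1)^\alpha\le K_0-L_0+K_1\|F(y)\|^\alpha+K_2D^{p_\alpha}=2^{\alpha p_\alpha}(1+2\cdot 3^\alpha)L_1\|F(y)\|^\alpha$. At $\alpha=0.9$ these factors are about $1.97\cdot 10^4$ versus $1.75\cdot 10^3$.

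The missing idea is to merge the two terms by concavity rather than by a maximum. With $c=(L_0/L_1)^{1/\alpha}$ you have $L_0+L_1u^\alpha=L_1(c^\alpha+u^\alpha)\le 2^{1-\alpha}L_1(c+u)^\alpha$. Hence $w=c+M\ge c>0$ is Lipschitz and $(w^{1-\alpha})'\le 2^{1-\alpha}(1-\alpha)L_1D$ a.e. Integrating, and using $(a+b)^{q}\le 2^{\max\{q-1,0\}}(a^q+b^q)$ and $(c+s)^\alpha\le c^\alpha+s^\alpha$, you get
\[
M(1)^\alpha\le\bigl[(c+\|F(y)\|)^{1-\alpha}+2^{1-\alpha}(1-\alpha)L_1D\bigr]^{p_\alpha}\le 2^{\max\{p_\alpha-1,0\}}\Bigl[\frac{L_0}{L_1}+\|F(y)\|^\alpha+2^\alpha\bigl((1-\alpha)L_1D\bigr)^{p_\alpha}\Bigr].
\]
Since $\alpha p_\alpha-(p_\alpha-1)=1-\alpha>0$, you have $2^{\max\{p_\alpha-1,0\}}\le 2^{\alpha p_\alpha}$. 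Together with $2^\alpha\le 3^\alpha$ and $L_1^{1+p_\alpha}=L_1^{p_\alpha/\alpha}$, this gives $L_0+L_1M(1)^\alpha\le K_0+K_1\|F(y)\|^\alpha+K_2D^{p_\alpha}$. Then \eqref{l0l1 defn} yields \eqref{a small l0l1}, even with slightly smaller constants. If $L_0=0$, run the argument with $L_0$ replaced by $\varepsilon>0$ (the assumption still holds) and let $\varepsilon\to 0$. This also sidesteps the non-uniqueness of $u'=L_1Du^\alpha$ from $u(0)=0$.
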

Proposition~\ref{lem: l0l1 smooth} eliminates the maxima from (\ref{l0l1 defn}), providing us with the more tractable upper bounds \eqref{a1 l0l1} and \eqref{a small l0l1}. 
In particular, note that in classical convergence analyses of \eg and \peg in the $L$-Lipschitz case, the stepsize is  set to $\gamma_k\propto\nicefrac{1}{L}$ to achieve the bound 
\vspace{-3mm}
\begin{equation}
    \gamma_k\|F(\xhatk)-F(\tilde{x}_k)\|\leq C\|\xhatk-\tilde{x}_k\|\label{eqn: backtrack cond}
\end{equation}
where $\tilde{x}_k=x_k$ for \eg and $\tilde{x}_k=\xhat_{k-1}$ for \peg, and $C\in\R$ is constant parameter that does not depend on $L$ or $\mu$. 
In the $(L_0,L_1)$-Lipschitz setting, our proposed stepsizes adapt to the bounds in Proposition~\ref{lem: l0l1 smooth} in order to achieve a similar result. 
\subsection{Stepsize selection}

For \eg, following a similar approach to \cite{choudhury2025extragradient} we use:
\vspace{-2mm}
\begin{equation}
    \gamma_{k,\eg}=\begin{cases}
        \frac{\nu}{L_0+L_1\|F(x_k)\|} &\text{ if $\alpha = 1$} \\
        \frac{\nu}{2K_0+(2K_1+2^{1-\alpha}K_2^{1-\alpha})\|F(x_k)\|^\alpha} &\text{ if $\alpha \in (0,1)$}\label{EG Stepsizes}
    \end{cases}
\end{equation}
where $\nu>0$ is a stepsize scaling parameter with an upper bound that depends on $\alpha$ and properties of the operator $F$ (e.g. monotone or strongly monotone). These upper bounds are specified in subsequent theorems and range from $0.17$ to $1$. The form of stepsizes similar to $\frac{A}{B+C\|F(x_k)\|^\alpha}$ is also seen in the minimization setting in \cite{gorbunov2025methods}. 

On the other hand, for \peg, we require the somewhat more complex stepsizes $\gamma_{k,\peg}$. Let $d_k:=\|x_k-\xhat_{k-1}\|$ and for $\alpha\in(0,1)$, define  $s_{k,\alpha}
:=d_k+\frac{\nu}{K_1}
       \|F(\xhat_{k-1})\|^{1-\alpha}$. Then
\vspace{-2mm}
\begin{equation}\label{Popov stepsizes}
\gamma_{k,\peg}:=
\begin{cases}
\displaystyle
\frac{\nu}
     {(L_0+L_1\|F(\xhat_{k-1})\|) e^{L_1d_k}},
& \alpha=1,\\[6pt]
\displaystyle
\frac{\nu}
     {K_0+K_1\|F(\xhat_{k-1})\|^\alpha
      +K_2s_{k,\alpha}^{p_\alpha}},
& \alpha\in (0,1).
\end{cases}
\end{equation}
For $\alpha=1$, this stepsize is similar to that proposed by \cite{pmlr-v235-vankov24a}, but it eliminates the dependence on the strong monotonicity parameter $\mu$.
While the stepsizes may not seem intuitive initially, note that as $\alpha\rightarrow 0$ or $L_1\rightarrow 0$ (equivalent to approaching the standard $L$-Lipschitz setting) both the \eg and \peg stepsizes become constant and proportional to the reciprocal of the effective $L$-Lipschitz constant, as expected (see Appendix~\ref{app:limiting behavior}).  Additionally, as \peg converges to a solution of the VI, resulting in $\|x_k-\xhat_{k-1}\|\rightarrow 0$, the stepsizes for \peg recover the form of $\frac{A}{B+C\|F(x_k)\|^\alpha}$. 

With these stepsizes, we obtain following lemma.
\begin{lemma}\label{lem:lipschitz analogue}
    For \eg and \popov, if $F$ is $\alpha$-symmetric $(L_0, L_1)$-Lipschitz and we choose the stepsize $\gamma_k$ according to equations (\ref{EG Stepsizes}) and (\ref{Popov stepsizes}) respectively, then the algorithm iterates satisfy
    \vspace{-2mm}
    \begin{equation}
    \gamma_k\|F(\xhatk)-F(\tilde{x}_k)\|\leq  C_\nu \|\xhatk-\tilde{x}_k\| \label{l0l1 half step general}
\end{equation}
where $\tilde{x}_k=x_k$ for \eg and $\tilde{x}_k=\xhat_{k-1}$ for \peg, $C_\nu=\nu e^\nu$ if $\alpha=1$, and $C_\nu=\nu$ if $\alpha \in (0,1)$, with $\nu<1$.
\end{lemma}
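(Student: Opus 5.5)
The plan is to apply Proposition~\ref{lem: l0l1 smooth} with $x=\xhatk$ and $y=\tilde{x}_k$. We then bound the distance $\|\xhatk-\tilde{x}_k\|$ using nonexpansiveness of $P_U$, and use the specific form of $\gamma_k$ to cancel the $(L_0,L_1)$ growth factor. We treat \eg and \peg, and within each the cases $\alpha=1$ and $\alpha\in(0,1)$, separately.

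\textbf{\eg.} Since $x_k\in U$, we have $\|\xhatk-x_k\|=\|P_U(x_k-\gamma_kF(x_k))-P_U(x_k)\|\le\gamma_k\|F(x_k)\|$.

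For $\alpha=1$, \eqref{a1 l0l1} with $y=x_k$ gives
\[
\gamma_k\|F(\xhatk)-F(x_k)\|\le \gamma_k(L_0+L_1\|F(x_k)\|)e^{L_1\|\xhatk-x_k\|}\|\xhatk-x_k\|.
\]
Here $\gamma_k(L_0+L_1\|F(x_k)\|)=\nu$. Moreover $L_1\|\xhatk-x_k\|\le L_1\gamma_k\|F(x_k)\|\le\nu$, so the exponential factor is at most $e^\nu$, which yields $C_\nu=\nu e^\nu$.

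For $\alpha\in(0,1)$, \eqref{a small l0l1} gives the factor $\gamma_k(K_0+K_1\|F(x_k)\|^\alpha+K_2\|\xhatk-x_k\|^{p_\alpha})$. The first two terms contribute at most $\nu/2$, because the denominator of $\gamma_k$ contains $2K_0+2K_1\|F(x_k)\|^\alpha$. For the third term, we use $\|\xhatk-x_k\|^{p_\alpha}\le(\gamma_k\|F(x_k)\|)^{p_\alpha}$ and aim to show $\gamma_kK_2(\gamma_k\|F(x_k)\|)^{p_\alpha}\le\nu/2$. Set $t=\gamma_kK_2^{1-\alpha}\|F(x_k)\|^\alpha$, which is at most $\nu/2^{1-\alpha}$ by the stepsize. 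Since $1+p_\alpha=1/(1-\alpha)$, the third term equals
\[
K_2\gamma_k^{1/(1-\alpha)}\|F(x_k)\|^{p_\alpha}=\big(\gamma_kK_2^{1-\alpha}\|F(x_k)\|^{\alpha}\big)^{1/(1-\alpha)}=t^{1/(1-\alpha)}\le \nu^{1/(1-\alpha)}/2\le\nu/2,
\]
where the last step uses $\nu<1$. This requires checking the exponent bookkeeping carefully, and that is the point where $\nu<1$ is essential.

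\textbf{\peg.} Here $\tilde{x}_k=\xhat_{k-1}$. Write
\[
\|\xhatk-\xhat_{k-1}\|\le\|\xhatk-x_k\|+d_k\le\gamma_k\|F(\xhat_{k-1})\|+d_k,
\]
using nonexpansiveness again with $x_k\in U$.

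For $\alpha=1$, \eqref{a1 l0l1} with $y=\xhat_{k-1}$ yields the factor $\gamma_k(L_0+L_1\|F(\xhat_{k-1})\|)e^{L_1\|\xhatk-\xhat_{k-1}\|}$. Bounding the exponent by $L_1d_k+L_1\gamma_k\|F(\xhat_{k-1})\|$, the $e^{L_1d_k}$ factor cancels against the stepsize. The remaining exponent satisfies $L_1\gamma_k\|F(\xhat_{k-1})\|\le\nu e^{-L_1d_k}\le\nu$, so the total is at most $\nu e^\nu$.

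For $\alpha\in(0,1)$, the factor is $\gamma_k(K_0+K_1\|F(\xhat_{k-1})\|^\alpha+K_2\|\xhatk-\xhat_{k-1}\|^{p_\alpha})$. It suffices to show $\|\xhatk-\xhat_{k-1}\|\le s_{k,\alpha}$, because then the whole bracket is at most the denominator of $\gamma_k$ and the product is $\le\nu$. This reduces to $\gamma_k\|F(\xhat_{k-1})\|\le\frac{\nu}{K_1}\|F(\xhat_{k-1})\|^{1-\alpha}$, i.e.\ $\gamma_kK_1\|F(\xhat_{k-1})\|^\alpha\le\nu$. That holds immediately, since $K_1\|F(\xhat_{k-1})\|^\alpha$ is one summand of the denominator.

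\textbf{Main obstacle.} The delicate step is the \eg case with $\alpha\in(0,1)$. There the self-referential term $K_2\|\xhatk-x_k\|^{p_\alpha}$ must be absorbed using the $2^{1-\alpha}K_2^{1-\alpha}$ piece of the stepsize, which needs the exponent identity $1+p_\alpha=1/(1-\alpha)$ and the assumption $\nu<1$. The remaining cases are direct cancellations.
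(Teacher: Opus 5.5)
Your proposal is correct and follows the paper's proof essentially step for step. Both arguments use nonexpansiveness of $P_U$ to bound $\|\hat{x}_k-\tilde{x}_k\|$, apply Proposition~\ref{lem: l0l1 smooth} with $y=\tilde{x}_k$, and cancel against the stepsize denominator. For EG with $\alpha\in(0,1)$ you make the same split into a $\nu/2$ piece and a $\nu^{1/(1-\alpha)}/2\le\nu/2$ piece, using $\nu<1$. The only cosmetic difference is in the PEG bound on $\|\hat{x}_k-\hat{x}_{k-1}\|$: you go through $x_k$ via the triangle inequality and nonexpansiveness at $x_k\in U$, whereas the paper applies nonexpansiveness at $\hat{x}_{k-1}\in U$ first, and both yield $d_k+\gamma_k\|F(\hat{x}_{k-1})\|$.
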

Lemma~\ref{lem:lipschitz analogue} allows us to perform a similar convergence analysis to the $L$-Lipschitz case. However, due to our use of non-constant stepsizes, our proofs will also require the derivation of stepsize lower bounds.
\subsection{Lower Bounds on Stepsizes}

We now provide these lower bounds for each of our stepsizes, dependent on the algorithm iterates being bounded. Here, bounded iterates is not an assumption, but an algorithmic property proved in the subsequent theorems. The first lower bound for \eg is similar to that obtained as an intermediate step in the proofs of \cite{choudhury2025extragradient} for root-finding problems (unconstrained setting), but it includes an additional dependence on $\|F(x^*)\|$ as we focus on the more complicated VI setting. 
    \begin{lemma}\label{lem:EG step lower bound}
Let $F$ be $\alpha$-symmetric $(L_0,L_1)$-Lipschitz.
Fix $x^*\in U^*$ and write
$D_0=\|x_0-x^*\|$.
Suppose the iterates of \eg with stepsizes
\eqref{EG Stepsizes} satisfy
$\|x_k-x^*\|\le D_0$ for all $k\ge0$.

Define
\vspace{-3mm}
\begin{align*}
M_1
&:= \|F(x^*)\|+D_0(L_0+L_1\|F(x^*)\|)e^{L_1D_0},\\
M_\alpha
&:= \|F(x^*)\|+D_0\bigl(K_0+K_1\|F(x^*)\|^\alpha +K_2D_0^{p_\alpha}\bigr),\\
A_\alpha
&:= 2K_1+2^{1-\alpha}K_2^{1-\alpha}.
\end{align*}
Then $\gamma_{k,\eg}\ge\ugamma$ for all $k\ge0$, where
\begin{equation}\label{EG step lower bound main}
\ugamma :=
\begin{cases}
\displaystyle\frac{\nu}{L_0+L_1M_1},
& \alpha=1,\\[6pt]
\displaystyle\frac{\nu}{2K_0+A_\alpha M_\alpha^\alpha},
& \alpha\in (0,1)
\end{cases}
\end{equation}
\end{lemma}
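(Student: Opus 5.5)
The stepsize \eqref{EG Stepsizes} depends on $x_k$ only through $\|F(x_k)\|$, and it is decreasing in that quantity. So the plan is to bound $\|F(x_k)\|$ uniformly by $M_1$ (for $\alpha=1$) or $M_\alpha$ (for $\alpha\in(0,1)$), and then substitute. The natural route is the triangle inequality
$\|F(x_k)\|\le \|F(x^*)\|+\|F(x_k)-F(x^*)\|$,
followed by Proposition~\ref{lem: l0l1 smooth} with the anchor $y=x^*$ and $x=x_k$. We need $\|F(x^*)\|$ rather than $0$ here because in the constrained VI setting $F(x^*)$ need not vanish.

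\textbf{Case $\alpha=1$.} By \eqref{a1 l0l1},
$\|F(x_k)-F(x^*)\|\le (L_0+L_1\|F(x^*)\|)e^{L_1\|x_k-x^*\|}\|x_k-x^*\|$.
The function $t\mapsto t e^{L_1 t}$ is nondecreasing on $[0,\infty)$. Together with the hypothesis $\|x_k-x^*\|\le D_0$, this gives $\|F(x_k)\|\le M_1$. Hence $L_0+L_1\|F(x_k)\|\le L_0+L_1M_1$, and therefore $\gamma_{k,\eg}\ge \nu/(L_0+L_1M_1)$.

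\textbf{Case $\alpha\in(0,1)$.} By \eqref{a small l0l1},
$\|F(x_k)-F(x^*)\|\le (K_0+K_1\|F(x^*)\|^\alpha+K_2\|x_k-x^*\|^{p_\alpha})\|x_k-x^*\|$.
Since $p_\alpha>0$, the right-hand side is nondecreasing in $\|x_k-x^*\|$, so it is at most $D_0(K_0+K_1\|F(x^*)\|^\alpha+K_2D_0^{p_\alpha})$. Thus $\|F(x_k)\|\le M_\alpha$. Because $t\mapsto t^\alpha$ is increasing, $\|F(x_k)\|^\alpha\le M_\alpha^\alpha$. The denominator of \eqref{EG Stepsizes} is $2K_0+A_\alpha\|F(x_k)\|^\alpha$ with $A_\alpha\ge0$, so it is at most $2K_0+A_\alpha M_\alpha^\alpha$, which yields the claimed $\ugamma$.

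There is no real obstacle. The only points needing care are the following:
\begin{itemize}
\item the monotonicity of the bounding functions in the distance, so that the hypothesis $\|x_k-x^*\|\le D_0$ can be applied;
\item the correct ordering of $x$ and $y$ in Proposition~\ref{lem: l0l1 smooth}, so that the $\|F(\cdot)\|$ term on the right is evaluated at the fixed point $x^*$ and not at $x_k$;
\item noting that this step applies for every $k\ge0$, including $k=0$, where it holds trivially.
\end{itemize}
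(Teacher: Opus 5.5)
Your proposal is correct and matches the paper's proof. Like the paper, you apply the triangle inequality through $\|F(x^*)\|$, then Proposition~\ref{lem: l0l1 smooth} anchored at $y=x^*$, then replace $\|x_k-x^*\|$ by $D_0$ using monotonicity, and finally substitute the resulting bound on $\|F(x_k)\|$ into the stepsize formula \eqref{EG Stepsizes}, which is decreasing in $\|F(x_k)\|$.
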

\vspace{-2mm}
Similarly, for \peg we have

\begin{lemma}\label{lem:popov step lower bound}
Let $F$ be $\alpha$-symmetric $(L_0,L_1)$-Lipschitz.
Fix $x^*\in U^*$ and define $D_k=\|x_k-x^*\|$,
$R_k=D_k^2+\|x_k-\xhat_{k-1}\|^2$, and $F_*=\|F(x^*)\|$

Suppose the iterates of \peg with stepsizes
\eqref{Popov stepsizes} and $C_\nu<1$ satisfy
$R_k\le R_0$ for all $k\ge0$.

For $\alpha=1$, define
\vspace{-2mm}
\begin{align*}
M_{1,\mathrm{PEG}}
&:= \sqrt{2}D_0
    \bigl(L_0+L_1 F_*\bigr) e^{\sqrt{2}L_1D_0}
    + F_*.
\end{align*}
For $\alpha\in (0,1)$, define
\vspace{-2mm}
\begin{align*}
q_\alpha
&:= \frac{\nu^2}
    {(1-\nu)\bigl[K_1+K_2(\nu/K_1)^{p_\alpha}\bigr]}
    +\frac{\nu}{K_1},\\
M_{\alpha,\mathrm{PEG}}
&:= \sqrt{2}D_0
    \bigl(K_0+K_1 F_*^{\alpha}
 +K_2(\sqrt{2}D_0)^{p_\alpha}\bigr)
    +F_*.
\end{align*}
Then, for all $k\ge0$,
\vspace{-2mm}
\begin{equation}\label{popov step lower bound main}
\gamma_{k,\peg}\ge\ugamma:=\frac{\nu}{\Delta},
\end{equation}
where
\[
\Delta :=
\begin{cases}
(L_0+L_1M_{1,\mathrm{PEG}})
\exp\!\left(\dfrac{\nu^2e^\nu}{1-\nu e^\nu}\right),
& \alpha=1,\\[6pt]
K_0+(K_1+K_2q_\alpha^{p_\alpha})
M_{\alpha,\mathrm{PEG}}^\alpha,
& \alpha\in (0,1).
\end{cases}
\]
\end{lemma}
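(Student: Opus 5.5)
The plan is to bound the two quantities entering the denominator of \eqref{Popov stepsizes}, namely $\|F(\xhat_{k-1})\|$ and $d_k=\|x_k-\xhat_{k-1}\|$, uniformly in $k$. The bound on $\|F(\xhat_{k-1})\|$ will come from the boundedness hypothesis $R_k\le R_0$. The bound on $d_k$ will come from a self-referential argument that uses Lemma~\ref{lem:lipschitz analogue}.

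\textbf{Step 1: controlling $\|F(\xhat_{k-1})\|$.} Since $\xhat_{-1}=x_0$, we have $R_0=D_0^2$. The hypothesis $R_k\le R_0$ then gives both $D_k\le D_0$ and $d_k\le D_0$. By the triangle inequality,
\[
\|\xhat_{k-1}-x^*\|\le D_k+d_k\le \sqrt{2}\sqrt{R_k}\le\sqrt2 D_0 .
\]
Apply Proposition~\ref{lem: l0l1 smooth} with $y=x^*$ and $x=\xhat_{k-1}$. For $\alpha=1$ this gives
\[
\|F(\xhat_{k-1})\|\le F_*+\sqrt2D_0(L_0+L_1F_*)e^{\sqrt2L_1D_0}=M_{1,\mathrm{PEG}}.
\]
For $\alpha\in(0,1)$ it gives, in the same way, $\|F(\xhat_{k-1})\|\le M_{\alpha,\mathrm{PEG}}$.

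\textbf{Step 2: controlling $d_k$ relative to the stepsize.} The crude bound $d_k\le D_0$ would produce a factor $e^{L_1D_0}$, which is not what the statement claims. Instead I will show $\gamma_k d_k$ is small, or more precisely that $L_1 d_k$ (for $\alpha=1$) and $s_{k,\alpha}$ (for $\alpha<1$) are bounded by constants depending only on $\nu$.

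By nonexpansiveness of $P_U$,
\[
d_{k+1}=\|x_{k+1}-\xhat_k\|\le\gamma_k\|F(\xhat_k)-F(\xhat_{k-1})\|\le C_\nu\|\xhat_k-\xhat_{k-1}\|,
\]
where the last inequality is Lemma~\ref{lem:lipschitz analogue}. Again by nonexpansiveness,
\[
\|\xhat_k-\xhat_{k-1}\|\le d_k+\|\xhat_k-x_k\|\le d_k+\gamma_k\|F(\xhat_{k-1})\|.
\]
For $\alpha=1$, the stepsize gives $\gamma_k\|F(\xhat_{k-1})\|\le \nu/L_1$. So $L_1d_{k+1}\le C_\nu(L_1d_k+\nu)$. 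Unrolling from $d_0=0$ and summing the geometric series yields
\[
L_1d_k\le \frac{\nu C_\nu}{1-C_\nu}=\frac{\nu^2e^\nu}{1-\nu e^\nu},
\]
which is exactly the exponent in $\Delta$. Hence
\[
\gamma_k\ge \frac{\nu}{(L_0+L_1M_{1,\mathrm{PEG}})\exp\!\bigl(\nu^2e^\nu/(1-\nu e^\nu)\bigr)}.
\]

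\textbf{Step 3: the case $\alpha\in(0,1)$ (main obstacle).} The same recursion gives $d_{k+1}\le \nu(d_k+\gamma_k\|F(\xhat_{k-1})\|)$. Now
\[
\gamma_k\|F(\xhat_{k-1})\|\le \frac{\nu\|F(\xhat_{k-1})\|}{K_1\|F(\xhat_{k-1})\|^\alpha+K_2s_{k,\alpha}^{p_\alpha}}.
\]
This is not uniformly bounded by a constant; it scales like $\|F\|^{1-\alpha}$, which is why $s_{k,\alpha}$ carries the term $\frac{\nu}{K_1}\|F(\xhat_{k-1})\|^{1-\alpha}$. The goal is an inductive bound of the form
\[
s_{k,\alpha}\le q_\alpha\,\max\bigl(1,\|F(\xhat_{k-1})\|^{1-\alpha}\bigr),
\]
or a variant scaled appropriately. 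From it,
\[
K_2 s_{k,\alpha}^{p_\alpha}\le K_2q_\alpha^{p_\alpha}\|F\|^{\alpha}
\]
(using $(1-\alpha)p_\alpha=\alpha$), up to the constant case. Together with Step 1 this gives $\Delta=K_0+(K_1+K_2q_\alpha^{p_\alpha})M_{\alpha,\mathrm{PEG}}^\alpha$.

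To close the induction, I would use the denominator term $K_2 s^{p_\alpha}\ge K_2(\nu/K_1)^{p_\alpha}\|F\|^\alpha$. This gives
\[
\gamma_k\|F(\xhat_{k-1})\|\le \frac{\nu\|F\|^{1-\alpha}}{K_1+K_2(\nu/K_1)^{p_\alpha}}.
\]
Summing the geometric recursion with ratio $\nu$ then yields $d_k\le \frac{\nu^2}{(1-\nu)[K_1+K_2(\nu/K_1)^{p_\alpha}]}\cdot(\text{power of }\|F\|)$. Adding $\nu/K_1$ reproduces $q_\alpha$.

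The delicate point is that $\|F(\xhat_{j-1})\|$ varies with $j$. I will therefore replace each such value by its uniform bound $M_{\alpha,\mathrm{PEG}}$ from Step 1 before summing, and track the resulting powers carefully. I expect the homogeneity bookkeeping here, namely matching $d_k$ against $\|F\|^{1-\alpha}$ so the final bound scales as $M^\alpha$, to be the hardest part.
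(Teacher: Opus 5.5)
Your proposal is correct and follows the paper's proof essentially step for step. You first bound $\|\hat{x}_{k-1}-x^*\|\le\sqrt{2}D_0$ to get $\|F(\hat{x}_{k-1})\|\le M_{1,\mathrm{PEG}}$ (resp.\ $M_{\alpha,\mathrm{PEG}}$). You then unwind $d_{k+1}\le C_\nu\bigl(d_k+\gamma_k\|F(\hat{x}_{k-1})\|\bigr)$ from $d_0=0$, using $\gamma_k\|F(\hat{x}_{k-1})\|\le \nu/L_1$ for $\alpha=1$ and $\gamma_k\|F(\hat{x}_{k-1})\|\le \nu M_{\alpha,\mathrm{PEG}}^{1-\alpha}/[K_1+K_2(\nu/K_1)^{p_\alpha}]$ for $\alpha\in(0,1)$. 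In Step 3 you need no induction and no $\max(1,\cdot)$-type target in terms of the current $\|F(\hat{x}_{k-1})\|$; such a bound need not even hold, because $d_k$ is driven by earlier operator values. Your final choice of replacing every $\|F(\hat{x}_{j-1})\|$ by $M_{\alpha,\mathrm{PEG}}$ gives $s_{k,\alpha}\le q_\alpha M_{\alpha,\mathrm{PEG}}^{1-\alpha}$ directly, hence $K_2 s_{k,\alpha}^{p_\alpha}\le K_2 q_\alpha^{p_\alpha}M_{\alpha,\mathrm{PEG}}^{\alpha}$, exactly as in the paper.
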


For Lemma~\ref{lem:popov step lower bound}, let us note that the condition of $C_\nu<1$ is equivalent to choosing $\nu<1$ when $\alpha\in (0,1)$ and $\nu<0.56$ when $\alpha=1$. In addition, note that these lower bounds preserve the property of our proposed stepsizes that as $L_1\rightarrow0$ or $\alpha\rightarrow 0$, they are proportional to the reciprocal of the Lipschitz constant for each of these limiting cases. This will later allow our convergence guarantees to reduce to the $L$-Lipschitz case (up to a constant). The lower bound for the \popov stepsizes is smaller than that for \eg, largely due to our need to use the Lyapunov function $R_k$ for bounding the iterates rather than $\|x_0-x^*\|^2$ and our need to additionally bound the $\|x_k-\xhat_{k-1}\|$ term in the denominator. 

\section{Convergence Analysis}\label{sec:convergence analysis}
In this section, we present convergence guarantees for \eg and \peg in the $(L_0,L_1)$-Lipschitz setting for monotone (sublinear convergence) and strongly monotone operators (linear convergence).
As corollaries of our analysis, we provide refined convergence guarantees without exponential dependence on the initial distance from the solution for both \eg and \peg in the unconstrained setting as well as convergence guarantees for line-search variants of \eg and \peg. 

\subsection{Monotone VIs}

To measure convergence, we will use the \textit{restricted gap}
\begin{align*}
    \gap{x}=\max_{y\in B}\inprd{F(y)}{x-y}
\end{align*}
where $B=\set{y\in U: \|y-x^*\|\leq r\|x_0-x^*\|}$, $x_0$ is the initial point of the iterative algorithm, $x^*\in U^*$, and $r\in \R$.  
As long as the algorithm iterates are in $B$, then convergence of the restricted gap function to $0$ ensures convergence to a solution of a monotone VI \citep{nesterov_dual_2007}. Finally, here we will focus on the \textit{averaged iterates} $\bar{x}_K=\nicefrac{\sum_{k=0}^{K-1}(\gamma_k\xhatk)}{\sum_{k=0}^{K-1}\gamma_k}$ in the constrained setting and the best iterate in the unconstrained setting. For $L$-Lipschitz operators, \eg and \peg have slower $\bigo{\nicefrac{1}{\sqrt{K}}}$ last-iterate convergence rates \citep{cai_last_iterate_eg,popov_last_iterate} compared to the $\bigo{\nicefrac{1}{K}}$ convergence guarantees for the averaged iterate \citep{nemirovski_ergodic_eg,monteiro_ergodic_eg_guarantees,popov_ergodic_convergence,pmlr-v125-golowich20a}, and iterate averaging can also have practical benefits for GAN training in some settings \citep{gidel2018a,gan_tseng,yaz2018averaging}. Consequently, we leave last-iterate convergence in the $(L_0,L_1)$-Lipschitz setting for future work.

    \begin{theorem}\label{thm: EG monotone}
        Let $F$ be $\alpha$-symmetric $(L_0, L_1)$-Lipschitz and monotone. Additionally let $\xbark=\frac{\sum_{k=0}^{K-1}\gamma_k\xhatk}{\sum_{k=0}^{K-1}\gamma_k}$ and $x^*\in U^*$. Then the averaged iterates $\xbark$ of \eg with stepsizes given by (\ref{EG Stepsizes}) and $\nu$ chosen so that $C_\nu< 1$  satisfy
         \begin{equation*}
          \gap{\xbark}\leq \p{2+\frac{1}{\sqrt{1-C_\nu^2}}}^2\frac{\|x_0-x^*\|^2}{2\ugamma K}
        \end{equation*}
        where $\ugamma$ is defined in Lemma~\ref{lem:EG step lower bound} and $B=\{y\in U:\|y-x^*\|^2\leq  (1+(1-C_\nu^2)^{-\nicefrac{1}{2}})^2\|x_0-x^*\|^2\}$. Further, $\xbark\in B$.
    \end{theorem}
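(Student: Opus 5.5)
The plan follows the classical ergodic analysis of \eg for monotone $L$-Lipschitz VIs. Lemma~\ref{lem:lipschitz analogue} replaces the Lipschitz step, and Lemma~\ref{lem:EG step lower bound} replaces the constant stepsize in the final averaging. First I derive a one-step inequality valid for any $y\in U$. Use the projection characterizations $\inprd{x_k-\gamma_kF(x_k)-\xhatk}{z-\xhatk}\le 0$ for $z\in U$, and $\inprd{x_k-\gamma_kF(\xhatk)-x_{k+1}}{z-x_{k+1}}\le0$. Then apply the three-point identity and insert $z=x_{k+1}$ in the first and $z=y$ in the second. This gives
\begin{equation*}
2\gamma_k\inprd{F(\xhatk)}{\xhatk-y}\le \|x_k-y\|^2-\|x_{k+1}-y\|^2-\|x_k-\xhatk\|^2-\|\xhatk-x_{k+1}\|^2+2\gamma_k\inprd{F(x_k)-F(\xhatk)}{x_{k+1}-\xhatk}.
\end{equation*}
The last term is bounded via Lemma~\ref{lem:lipschitz analogue} (with $\tilde x_k=x_k$) and Young's inequality by $C_\nu^2\|x_k-\xhatk\|^2+\|\xhatk-x_{k+1}\|^2$. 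The result is
\begin{equation*}
2\gamma_k\inprd{F(\xhatk)}{\xhatk-y}\le\|x_k-y\|^2-\|x_{k+1}-y\|^2-(1-C_\nu^2)\|x_k-\xhatk\|^2.
\end{equation*}

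Second, I establish boundedness. Take $y=x^*$; since $\inprd{F(\xhatk)}{\xhatk-x^*}\ge\inprd{F(x^*)}{\xhatk-x^*}\ge0$ by monotonicity and the VI property, the distances $D_k=\|x_k-x^*\|$ are nonincreasing, so $D_k\le D_0$. Summing also gives $(1-C_\nu^2)\sum_k\|x_k-\xhatk\|^2\le D_0^2$, hence $\|\xhatk-x^*\|\le D_k+\|x_k-\xhatk\|\le (1+(1-C_\nu^2)^{-1/2})D_0$. So every $\xhatk\in B$, and by convexity of $B$ the average $\xbark\in B$. The hypothesis of Lemma~\ref{lem:EG step lower bound} now holds, so $\gamma_k\ge\ugamma$ for all $k$.

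Third, I bound the gap. Fix $y\in B$, use monotonicity $\inprd{F(y)}{\xhatk-y}\le\inprd{F(\xhatk)}{\xhatk-y}$, drop the negative term, and telescope. This yields $2\sum_k\gamma_k\inprd{F(y)}{\xhatk-y}\le\|x_0-y\|^2$. Dividing by $2\sum_k\gamma_k\ge 2K\ugamma$ and using linearity in $\xhatk$ gives $\inprd{F(y)}{\xbark-y}\le \|x_0-y\|^2/(2\ugamma K)$. Finally $\|x_0-y\|\le\|x_0-x^*\|+\|x^*-y\|\le (2+(1-C_\nu^2)^{-1/2})D_0$ on $B$. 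Taking the max over $y\in B$ gives exactly the stated constant.

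The main obstacle is the ordering of the argument. The stepsize lower bound $\ugamma$ requires bounded iterates, so the boundedness step must be proven first, using only the sign information from monotonicity, before any rate is extracted. One must also confirm that $\xbark$ lies in $B$ via the extrapolated points, not just the $x_k$. The rest is routine.
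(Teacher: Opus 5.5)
Your proposal is correct and follows the paper's proof essentially step for step. You use the same one-step inequality (the paper's Lemma~\ref{lem:unified predescent} combined with Lemma~\ref{lem:lipschitz analogue} and $2ab\le a^2+b^2$), the same boundedness argument from $y=x^*$ before invoking Lemma~\ref{lem:EG step lower bound}, and the same telescoping with monotonicity for general $y\in B$. Your only deviations are cosmetic: the triangle inequality replaces the paper's Young's inequality with optimized $\beta$ and gives identical constants, and dividing by $\sum_k\gamma_k$ pointwise in $y$ before maximizing means you do not need $\gap{\xbark}\ge 0$ in the averaging step, unlike the paper's averaging lemma.
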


Theorem \ref{thm: EG monotone} provides the first convergence guarantees for \eg that apply to all monotone, $(L_0,L_1)$-Lipschitz operators in the constrained setting. Our permitted stepsizes are larger than those in \cite{choudhury2025extragradient} for best iterate convergence guarantees in the unconstrained case (there $C_\nu \leq\nicefrac{1}{\sqrt{2}}$). As discussed above, our use of the average iterate is common for \eg in monotone settings, and for $L$-Lipschitz operators, the average iterate achieves a $\bigo{\nicefrac{L}{K}}$ convergence rate \citep{nemirovski_ergodic_eg,monteiro_ergodic_eg_guarantees}. Here too, as either $L_1\rightarrow 0$ or $\alpha\rightarrow 0$, we obtain the corresponding $\bigo{\nicefrac{L}{K}}$ convergence guarantee, indicating the tightness of our analysis. While \cite{choudhury2025extragradient} also provided a second analysis where they are able to remove the $\exp(\|x_0-x^*\|)$ from the convergence guarantee for $\alpha=1$, central to their analysis is that $\|F(x_k)\|$ decreases to $0$ in the unconstrained case. Here in the constrained case, where the norm of the operator at the solution $\|F(x^*)\|$ may be arbitrarily large, we are unable to make similar guarantees of how long it will take for $\|F(x_k)\|$ to become sufficiently small for us to use a tighter lower bound on $\gamma_k\propto\nicefrac{1}{\|F(x_k)\|}$. Developing similar guarantees in the constrained setting is thus left for future work.

We next provide similar results for \peg.
    \begin{theorem}\label{thm: popov monotone}
        Let $F$ be $\alpha$-symmetric $(L_0,L_1)$-Lipschitz and monotone. Additionally, let $\xbark=\frac{\sum_{k=0}^{K-1}\gamma_k\xhatk}{\sum_{k=0}^{K-1}\gamma_k}$ and $x^*\in U^*$. Then the averaged iterates $\xbark$ of \peg with stepsize given by (\ref{Popov stepsizes}) and $\nu$ chosen so that $C_\nu\leq\frac{1}{2}$ satisfy 
        \begin{equation*}
            \gap{\xbark}\leq \frac{(3+2\sqrt{2})\|x_0-x^*\|^2}{2\ugamma K}
        \end{equation*}
        where $\ugamma$ is defined in Lemma~\ref{lem:popov step lower bound} and $B=\set{y\in U:\|y-x^*\|^2\leq 2\|x_0-x^*\|^2}$. Also, $\xbark\in B$.
    \end{theorem}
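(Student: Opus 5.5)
We follow the standard Popov/PEG ergodic analysis, with Lemma~\ref{lem:lipschitz analogue} playing the role of the Lipschitz bound and Lemma~\ref{lem:popov step lower bound} converting the weighted sum of stepsizes into $\ugamma K$. The first step is a one-step inequality valid for an arbitrary $y\in U$. The projection property of $x_{k+1}=P_U(x_k-\gamma_kF(\xhatk))$ gives $2\gamma_k\inprd{F(\xhatk)}{x_{k+1}-y}\le \|x_k-y\|^2-\|x_{k+1}-y\|^2-\|x_{k+1}-x_k\|^2$. The projection property of $\xhatk=P_U(x_k-\gamma_kF(\xhat_{k-1}))$, tested at $x_{k+1}$, gives $2\gamma_k\inprd{F(\xhat_{k-1})}{\xhatk-x_{k+1}}\le \|x_k-x_{k+1}\|^2-\|\xhatk-x_{k+1}\|^2-\|\xhatk-x_k\|^2$. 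Adding these and inserting $F(\xhatk)-F(\xhat_{k-1})$ yields
\begin{align*}
2\gamma_k\inprd{F(\xhatk)}{\xhatk-y}\le{}& \|x_k-y\|^2-\|x_{k+1}-y\|^2-\|\xhatk-x_{k+1}\|^2-\|\xhatk-x_k\|^2\\
&+2\gamma_k\|F(\xhatk)-F(\xhat_{k-1})\|\,\|\xhatk-x_{k+1}\|.
\end{align*}

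Next we control the cross term. By Lemma~\ref{lem:lipschitz analogue}, $\gamma_k\|F(\xhatk)-F(\xhat_{k-1})\|\le C_\nu\|\xhatk-\xhat_{k-1}\|\le C_\nu(\|\xhatk-x_k\|+\|x_k-\xhat_{k-1}\|)$. Young's inequality then bounds the cross term by $C_\nu(\|\xhatk-x_k\|^2+\|x_k-\xhat_{k-1}\|^2)+2C_\nu\|\xhatk-x_{k+1}\|^2$, or by a similar split. With $C_\nu\le\frac12$ the $\|\xhatk-x_{k+1}\|^2$ term is absorbed by the negative term. The $\|x_k-\xhat_{k-1}\|^2$ term is telescoped against the previous iteration's $-\|\xhat_{k-1}-x_k\|^2$, and here $\xhat_{-1}=x_0$ makes the initial term vanish. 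We then use the Lyapunov quantity $\Phi_k(y)=\|x_k-y\|^2+c\|x_k-\xhat_{k-1}\|^2$ with $c\in\{\tfrac12,1\}$ and obtain
\begin{equation*}
2\gamma_k\inprd{F(\xhatk)}{\xhatk-y}\le \Phi_k(y)-\Phi_{k+1}(y).
\end{equation*}
The exact constants are the part most likely to need adjustment.

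\textbf{Boundedness.} Take $y=x^*$. Monotonicity gives $\inprd{F(\xhatk)}{\xhatk-x^*}\ge\inprd{F(x^*)}{\xhatk-x^*}\ge0$, so $\Phi_k(x^*)$ is nonincreasing. Hence $R_k\le R_0=D_0^2$, which is the hypothesis of Lemma~\ref{lem:popov step lower bound}, and so $\gamma_k\ge\ugamma$ for all $k$. The delicate point is that the lemma's hypothesis is required for all $k$, while the decrease at step $k$ uses a stepsize depending on $d_k$. We handle this by induction on $k$: Lemma~\ref{lem:lipschitz analogue} holds unconditionally, so the Lyapunov decrease needs no lower bound, and boundedness follows first. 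Boundedness also gives $\|x_k-x^*\|^2\le D_0^2$. To place $\xhatk\in B$ we bound $\|\xhatk-x^*\|$ using $\|\xhatk-x_k\|$ and the Lyapunov terms. This should give $\|\xhatk-x^*\|^2\le 2D_0^2$, since $\|\xhatk-x^*\|\le\|x_k-x^*\|+\|\xhatk-x_k\|$ and the summed negative terms bound $\|\xhatk-x_k\|$. Because $B$ is convex, $\xbark\in B$ follows.

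\textbf{Gap bound.} For $y\in B$, monotonicity gives $\inprd{F(y)}{\xhatk-y}\le\inprd{F(\xhatk)}{\xhatk-y}$. Summing the one-step inequality and dividing by $2\sum_k\gamma_k\ge2\ugamma K$ gives
\begin{equation*}
\inprd{F(y)}{\xbark-y}\le\frac{\Phi_0(y)}{2\ugamma K}=\frac{\|x_0-y\|^2}{2\ugamma K}.
\end{equation*}
Finally, $\|x_0-y\|\le\|x_0-x^*\|+\|y-x^*\|\le(1+\sqrt2)D_0$, so $\|x_0-y\|^2\le(3+2\sqrt2)D_0^2$, which is the claimed constant after taking the maximum over $y\in B$.

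The \textbf{main obstacle} is the bookkeeping in the cross-term step: choosing the Young split and the weight $c$ so that $C_\nu\le\frac12$ suffices, and so that the Lyapunov function matches $R_k$ (weight $1$) as needed for Lemma~\ref{lem:popov step lower bound}.
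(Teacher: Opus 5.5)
Your one-step inequality, the induction argument for boundedness, and the final gap computation with $(1+\sqrt2)^2=3+2\sqrt2$ all match the paper. The gap is in the cross-term step you flag as the main obstacle: no choice of Young split closes it, so as written the proof does not go through under $C_\nu\le\frac12$.

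Write $a=\|\xhatk-x_k\|$, $b=\|x_k-\xhat_{k-1}\|$, $c=\|x_{k+1}-\xhatk\|$. Suppose you use only Cauchy--Schwarz and $\gamma_k\|F(\xhatk)-F(\xhat_{k-1})\|\le C_\nu(a+b)$. Then the decrease $\Phi_{k+1}(y)\le\Phi_k(y)-2\gamma_k\inprd{F(\xhatk)}{\xhatk-y}$, with $\Phi_k(y)=\|x_k-y\|^2+w\,b^2$, requires $-(1-w)c^2-a^2+2C_\nu(a+b)c-w b^2\le0$ for all $a,b,c\ge0$. Maximizing over $a$ and $b$ shows this holds iff $1-w\ge C_\nu^2(1+1/w)$.

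You need $w=1$, because Lemma~\ref{lem:popov step lower bound} is stated for $R_k$ and the radius $2\|x_0-x^*\|^2$ of $B$ comes from $R_{k+1}\le R_0$. With $w=1$ the condition forces $C_\nu=0$. Optimizing over $w$ gives at best $C_\nu\le\sqrt2-1$, and then neither the ball nor the $\ugamma$ is the one in the statement. Your specific split shows the double counting. After absorbing $2C_\nu c^2$, only $(1-2C_\nu)c^2$ remains, which vanishes at $C_\nu=\frac12$. Yet you also need $C_\nu c^2$ of it to cancel the $+C_\nu b^2$ at the next step.

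The missing idea is to bound $c$ itself by nonexpansiveness of $P_U$. Both $x_{k+1}$ and $\xhatk$ are projections of points based at $x_k$, so $c=\|P_U(x_k-\gamma_kF(\xhatk))-P_U(x_k-\gamma_kF(\xhat_{k-1}))\|\le\gamma_k\|F(\xhatk)-F(\xhat_{k-1})\|$. The cross term is then at most $2\gamma_k^2\|F(\xhatk)-F(\xhat_{k-1})\|^2\le2C_\nu^2\|\xhatk-\xhat_{k-1}\|^2\le4C_\nu^2(a^2+b^2)$. This no longer involves $c$, so the entire $-c^2$ is free to become the second summand of $R_{k+1}(y)$ with weight one. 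You get $R_{k+1}(y)\le R_k(y)-(1-4C_\nu^2)(a^2+b^2)-2\gamma_k\inprd{F(\xhatk)}{\xhatk-y}$, and the hypothesis $C_\nu\le\frac12$ is exactly $4C_\nu^2\le1$.

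Relatedly, your route to $\xhatk\in B$ through $\|x_k-x^*\|+\|\xhatk-x_k\|$ and the summed negative terms does not yield the constant $2$. Indeed, at $C_\nu=\frac12$ no negative $a^2$ term is left. Instead use $\|\xhatk-x^*\|^2\le2\|x_{k+1}-x^*\|^2+2\|x_{k+1}-\xhatk\|^2=2R_{k+1}\le2\|x_0-x^*\|^2$.
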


As with \eg, when $\alpha\rightarrow 0$ or $L_1\rightarrow 0$, we recover the expected $\bigo{\frac{L}{K}}$ ergodic convergence \citep{popov_ergodic_convergence}. We note that the change in stepsize constraints from \eg to \peg in the $(L_0,L_1)$-Lipschitz case parallels that of the $L$-Lipschitz case. While in the $L$-Lipschitz case, \eg requires $\gamma_k<\frac{1}{L}$ \citep{Korpelevich76} and \peg requires $\gamma_k<\frac{1}{2L}$ \citep{popov_ergodic_convergence}, in our setting \eg requires $C_\nu < 1$ while \popov requires $C_\nu\leq\half$. This parallel is a result of using proof techniques similar to the standard $L$-Lipschitz case, through Lemma~\ref{lem:lipschitz analogue}.

Finally let us consider the unconstrained setting with $\alpha=1$. For root-finding problems, $F(x^*)=0$, and $\|F(\xhat_{k-1})\|$ is the typical convergence measure. The following corollary establishes an $\bigo{1/K}$ bound
on the squared operator norm of the best iterate and
provides a refined guarantee without exponential dependence
on the initial distance $\|x_0-x^*\|$. A similar refinement was established for the monotone setting for \eg by \citep{choudhury2025extragradient}. To the best of our knowledge, the below is the first such guarantee for \peg.

\begin{corollary}\label{cor: popov monotone}
        Let $F$ be $1$-symmetric $(L_0,L_1)$-Lipschitz and monotone. In the unconstrained setting ($U=\R^d$), the iterates of \peg with stepsizes given by (\ref{Popov stepsizes}) and $\nu e^\nu=\frac{1}{2\sqrt{2}}$ ($\nu\approx 0.27$) satisfy
         \begin{equation*}
            \min_{0\leq k\leq K} \|F(\xhat_{k-1})\|^2 \leq \frac{2\|x_0-x^*\|^2}{\ugamma^2 (K+1)}
        \end{equation*}
        Also, for $K+1>\frac{2 L_1^2}{\nu^2}\exp\p{\frac{2\nu^2 e^\nu}{1-\nu e^\nu}}\|x_0-x^*\|^2$, 
         \begin{equation*}
            \mink \|F(\xhat_{k-1})\|\leq \frac{L_0 \sqrt{2}\exp\p{\frac{\nu^2 e^\nu}{1-\nu e^\nu}}\|x_0-x^*\|}{\nu\sqrt{K+1}-L_1\sqrt{2}\exp\p{\frac{\nu^2 e^\nu}{1-\nu e^\nu}}\|x_0-x^*\|}
        \end{equation*} 
    \end{corollary}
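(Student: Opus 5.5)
The plan is to run the standard unconstrained Past Extragradient energy argument on the Lyapunov function $R_k=\|x_k-x^*\|^2+\|x_k-\xhat_{k-1}\|^2$, using Lemma~\ref{lem:lipschitz analogue} in place of $L$-Lipschitzness. With $U=\R^d$ the updates read $\xhatk=x_k-\gamma_kF(\xhat_{k-1})$ and $x_{k+1}=x_k-\gamma_kF(\xhatk)$, and $F(x^*)=0$. Expanding $\|x_{k+1}-x^*\|^2$ and using monotonicity, $\inprd{F(\xhatk)}{\xhatk-x^*}\ge 0$, gives
\begin{align*}
\|x_{k+1}-x^*\|^2\le \|x_k-x^*\|^2-\|x_k-\xhatk\|^2-\|\xhatk-x_{k+1}\|^2+2\gamma_k\inprd{F(\xhatk)-F(\xhat_{k-1})}{\xhatk-x_{k+1}}.
\end{align*}
I would bound the cross term by $\gamma_k^2\|F(\xhatk)-F(\xhat_{k-1})\|^2+\|\xhatk-x_{k+1}\|^2$. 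Lemma~\ref{lem:lipschitz analogue} gives $\gamma_k\|F(\xhatk)-F(\xhat_{k-1})\|\le C_\nu\|\xhatk-\xhat_{k-1}\|\le C_\nu(\|\xhatk-x_k\|+\|x_k-\xhat_{k-1}\|)$. Hence the cross term is at most $2C_\nu^2(\|\xhatk-x_k\|^2+\|x_k-\xhat_{k-1}\|^2)+\|\xhatk-x_{k+1}\|^2$.

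Next I would add $\|x_{k+1}-\xhatk\|^2$ to both sides. Here $\|x_{k+1}-\xhatk\|=\gamma_k\|F(\xhatk)-F(\xhat_{k-1})\|\le C_\nu\|\xhatk-\xhat_{k-1}\|$, so it contributes at most another $2C_\nu^2(\cdots)$. Note that the parameter $\nu$ in Lemma~\ref{lem:lipschitz analogue} must be read at step $k$ via $\gamma_{k+1}$ versus $\gamma_k$. I would handle the mismatch by using $\gamma_{k+1}$ only through the lemma at step $k+1$, and to keep the Lyapunov form clean I would track $\|x_{k+1}-\xhatk\|$ directly as above. The choice $C_\nu=\nu e^\nu=1/(2\sqrt2)$, so that $C_\nu^2=1/8$, is exactly what should make the coefficients close. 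The target inequality is
\begin{align*}
R_{k+1}\le R_k-\tfrac12\|x_k-\xhatk\|^2-c\,\|x_k-\xhat_{k-1}\|^2 ,
\end{align*}
with $c\ge 0$. The main obstacle is bookkeeping the constants so that the $\|x_k-\xhat_{k-1}\|^2$ term is absorbed by $R_k$ while a multiple $\tfrac12\|x_k-\xhatk\|^2$ survives. I expect the constant $2$ in the final bound to come precisely from this $\tfrac12$.

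Given the descent inequality, $R_k\le R_0=\|x_0-x^*\|^2$, using $\xhat_{-1}=x_0$. This validates the hypothesis of Lemma~\ref{lem:popov step lower bound}, so $\gamma_k\ge\ugamma$ for all $k$. Since $\|x_k-\xhatk\|=\gamma_k\|F(\xhat_{k-1})\|\ge\ugamma\|F(\xhat_{k-1})\|$, telescoping over $k=0,\dots,K$ gives $\tfrac12\ugamma^2\sum_{k=0}^K\|F(\xhat_{k-1})\|^2\le\|x_0-x^*\|^2$. Bounding the minimum by the average yields the first claim.

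For the refined bound I would avoid the global lower bound. Instead I would use the telescoped inequality $\sum_{k=0}^K\gamma_k^2\|F(\xhat_{k-1})\|^2\le 2\|x_0-x^*\|^2$ together with a per-iterate bound $\gamma_k\ge \nu/\bigl((L_0+L_1\|F(\xhat_{k-1})\|)E\bigr)$, where $E=\exp(\nu^2e^\nu/(1-\nu e^\nu))$. The factor $E$ is exactly the factor controlling $e^{L_1d_k}$ in the proof of Lemma~\ref{lem:popov step lower bound}, obtained from $d_k\le \|x_k-\xhat_{k-1}\|$ being bounded via $C_\nu$ relative to the stepsize. Then each term satisfies $\gamma_k\|F(\xhat_{k-1})\|\ge \nu g_k/((L_0+L_1g_k)E)$ with $g_k=\|F(\xhat_{k-1})\|$. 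Let $g=\min_k g_k$. Since $t\mapsto t/(L_0+L_1t)$ is increasing, $(K+1)\nu^2g^2/((L_0+L_1g)^2E^2)\le 2\|x_0-x^*\|^2$. Taking square roots gives $\nu\sqrt{K+1}\,g\le\sqrt2 E\|x_0-x^*\|(L_0+L_1g)$. Solving for $g$ gives the stated bound, valid when the denominator is positive, which is exactly the condition on $K+1$.
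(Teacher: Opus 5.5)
Your proposal is correct and is essentially the paper's argument. Your Young's-inequality treatment of the cross term, together with adding $\|x_{k+1}-\xhatk\|^2\le C_\nu^2\|\xhatk-\xhat_{k-1}\|^2$, reproduces the paper's descent inequality $R_{k+1}\le R_k-(1-4C_\nu^2)\bigl(\|x_k-\xhatk\|^2+\|x_k-\xhat_{k-1}\|^2\bigr)$ with $1-4C_\nu^2=\half$ (so your $c=\half$). The remaining steps match the paper: telescoping with $\gamma_k\ge\ugamma$ for the first bound, and, for the refined bound, the per-iterate bound from \eqref{popov half step bound} combined with monotonicity of $t\mapsto t/(L_0+L_1t)$. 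The $\gamma_k$ versus $\gamma_{k+1}$ ``mismatch'' you worry about does not arise, since $x_{k+1}-\xhatk=-\gamma_k\bigl(F(\xhatk)-F(\xhat_{k-1})\bigr)$ involves only $\gamma_k$.
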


\subsection{Strongly Monotone VIs}
Next, we establish linear convergence of \eg and \peg for
strongly monotone VIs under the $\alpha$-symmetric
$(L_0,L_1)$-Lipschitz assumption.
\begin{theorem}\label{thm: EG sm}
         Let $F$ be $\alpha$-symmetric $(L_0, L_1)$-Lipschitz and $\mu$-strongly monotone. Then the iterates of \eg with stepsizes given by (\ref{EG Stepsizes}) and $\nu$ chosen so that $1-2\nu-\nu^2e^{2\nu}\geq0\ifaone$ and $1-\nu-\nu^2\geq0 \ifasmall$ satisfy
        \begin{equation*}
            \|x_k-x^*\|^2\leq (1-\ugamma\mu)^k \|x_0-x^*\|^2
        \end{equation*}
        where $\ugamma$ is defined in Lemma~\ref{lem:EG step lower bound}.
    \end{theorem}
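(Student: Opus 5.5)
The plan is to prove a one-step contraction $\|x_{k+1}-x^*\|^2\le(1-\gamma_k\mu)\|x_k-x^*\|^2$ and then combine it with the stepsize lower bound of Lemma~\ref{lem:EG step lower bound} by induction. The induction also settles the hypothesis of that lemma. If $\|x_j-x^*\|\le D_0$ for all $j\le k$, the lemma's argument, which only uses the bound at the current iterate, gives $\gamma_j\ge\ugamma$ for $j\le k$. The contraction then gives $\|x_{k+1}-x^*\|^2\le(1-\ugamma\mu)\|x_k-x^*\|^2\le D_0^2$, so boundedness propagates and the rate follows. Note that $\gamma_k\mu\le 1$ is automatic, because strong monotonicity combined with Lipschitz-type bounds forces $\mu\le L_0+L_1\|F\|^\alpha$, with the analogous statement for the $K_i$.

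For the one-step bound I will follow the classical EG analysis. First, the projection inequality for $x_{k+1}=P_U(x_k-\gamma_kF(\xhatk))$, tested against $x^*$, gives
\[\|x_{k+1}-x^*\|^2\le\|x_k-x^*\|^2-\|x_{k+1}-x_k\|^2-2\gamma_k\inprd{F(\xhatk)}{x_{k+1}-x^*}.\]
Next, I split $\inprd{F(\xhatk)}{x_{k+1}-x^*}=\inprd{F(\xhatk)}{x_{k+1}-\xhatk}+\inprd{F(\xhatk)}{\xhatk-x^*}$. Strong monotonicity together with $\inprd{F(x^*)}{\xhatk-x^*}\ge0$ gives $\inprd{F(\xhatk)}{\xhatk-x^*}\ge\mu\|\xhatk-x^*\|^2$. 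For the first term, the projection property of $\xhatk=P_U(x_k-\gamma_kF(x_k))$, tested at $x_{k+1}\in U$, yields
\[2\gamma_k\inprd{F(x_k)}{\xhatk-x_{k+1}}\le\|x_k-x_{k+1}\|^2-\|x_k-\xhatk\|^2-\|\xhatk-x_{k+1}\|^2.\]
Then I bound $2\gamma_k\inprd{F(\xhatk)-F(x_k)}{\xhatk-x_{k+1}}$ using Cauchy--Schwarz and Lemma~\ref{lem:lipschitz analogue}, which gives $\gamma_k\|F(\xhatk)-F(x_k)\|\le C_\nu\|\xhatk-x_k\|$. Putting these together,
\[\|x_{k+1}-x^*\|^2\le\|x_k-x^*\|^2-\|x_k-\xhatk\|^2-\|\xhatk-x_{k+1}\|^2+2C_\nu\|\xhatk-x_k\|\|\xhatk-x_{k+1}\|-2\gamma_k\mu\|\xhatk-x^*\|^2.\]

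To convert $\|\xhatk-x^*\|^2$ into $\|x_k-x^*\|^2$, I use $\|\xhatk-x^*\|^2\ge\tfrac12\|x_k-x^*\|^2-\|x_k-\xhatk\|^2$. This turns $-2\gamma_k\mu\|\xhatk-x^*\|^2$ into $-\gamma_k\mu\|x_k-x^*\|^2+2\gamma_k\mu\|x_k-\xhatk\|^2$. It remains to show that the quadratic form
\[(1-2\gamma_k\mu)a^2+b^2-2C_\nu ab,\qquad a=\|x_k-\xhatk\|,\; b=\|\xhatk-x_{k+1}\|,\]
is nonnegative, which holds when $C_\nu^2\le1-2\gamma_k\mu$.

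The main obstacle is bounding $\gamma_k\mu$ without knowing $\mu$, so that this condition reduces exactly to the stated constraints on $\nu$. For $\alpha=1$, strong monotonicity gives $\mu\|x-y\|^2\le\|F(x)-F(y)\|\|x-y\|$. Applying \eqref{a1 l0l1} and letting $x\to x_k$ gives $\mu\le L_0+L_1\|F(x_k)\|$, hence $\gamma_k\mu\le\nu$. The condition then becomes $\nu^2e^{2\nu}\le1-2\nu$, which is exactly the hypothesis. For $\alpha\in(0,1)$ the same limit via \eqref{a small l0l1} gives $\mu\le K_0+K_1\|F(x_k)\|^\alpha\le\tfrac12(2K_0+A_\alpha\|F(x_k)\|^\alpha)$, so $\gamma_k\mu\le\nu/2$. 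The requirement becomes $\nu^2\le1-\nu$, matching the hypothesis. If the constants turn out tighter than I expect, the fallback is a weighted Young inequality in place of the factor-$\tfrac12$ triangle step, but the stated conditions suggest the above bookkeeping is what is intended.
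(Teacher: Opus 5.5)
Your proposal is correct and follows the paper's proof essentially step for step. It uses the same ingredients: the projection-based pre-descent inequality (the paper's Lemma~\ref{lem:unified predescent}), the bound from Lemma~\ref{lem:lipschitz analogue}, Young's inequality with $\beta=1$ to pass from $\|\hat{x}_k-x^*\|^2$ to $\|x_k-x^*\|^2$, and $\gamma_k\mu\le\nu$ (resp.\ $\nu/2$) obtained from $\mu\le L_0+L_1\|F(x_k)\|$ (resp.\ $K_0+K_1\|F(x_k)\|^\alpha$). Two small differences are cosmetic: your quadratic-form check is equivalent to the paper's use of $2ab\le a^2+b^2$, and the paper gets boundedness directly from $\|x_{k+1}-x^*\|^2\le(1-\gamma_k\mu)\|x_k-x^*\|^2$ before invoking Lemma~\ref{lem:EG step lower bound}, instead of by your induction.
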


The conditions on $\nu$ are satisfied by
$\nu\leq 0.36$ when $\alpha=1$ and $\nu\leq 0.61$
when $\alpha\in (0,1)$, matching the choices used in the
unconstrained analysis of \citep{choudhury2025extragradient}.
The resulting linear convergence bound yields
$\|x_K-x^*\|^2\leq\eps$ after
$\bigo{\frac{1}{\mu\ugamma}
\log\p{\frac{\|x_0-x^*\|^2}{\eps}}}$ iterations.
When $F(x^*)=0$, as in unconstrained root-finding,
we recover the convergence guarantees of
\cite{choudhury2025extragradient}.
For unconstrained problems with $\alpha=1$, the following
corollary further refines the iteration complexity bound
to remove exponential dependence on the initial distance
$\|x_0-x^*\|$.

\begin{corollary}\label{cor: eg usm}
      Let $F$ be $1$-symmetric $(L_0,L_1)$-Lipschitz and $\mu$-strongly monotone, and $U=\R^n$ (unconstrained case). Then the iterates of \eg with stepsizes given by (\ref{EG Stepsizes}) and $\nu$ chosen so that $\nu\approx0.21$ is the solution to the $1-4\nu-2\nu^2e^{2\nu}=0$ satisfy $\|x_{K+1}-x^*\|^2\leq\varepsilon$ after 
     \begin{equation*}
        \floor{\frac{8L_1^2}{\nu^2} \|x_0-x^*\|^2 }+ \ceil{\frac{2L_0}{\nu\mu}\log\frac{\|x_0-x^*\|^2}{\eps}} \text{ iterates.}
    \end{equation*}
\end{corollary}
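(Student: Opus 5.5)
We need a one-step contraction for \eg in the unconstrained, $\alpha=1$ case whose rate uses the actual stepsize $\gamma_k=\nu/(L_0+L_1\|F(x_k)\|)$, not the worst-case bound $\ugamma$. The proof then has two phases: a short transient phase while $\|F(x_k)\|$ is large, and a linear phase once $L_1\|F(x_k)\|\le L_0$.

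\textbf{Step 1: per-step contraction.} Start from the standard \eg expansion with $U=\R^n$, so no projections appear. Write $x_{k+1}=x_k-\gamma_kF(\xhatk)$ and $\xhatk=x_k-\gamma_kF(x_k)$, and use Lemma~\ref{lem:lipschitz analogue} with $C_\nu=\nu e^\nu$. Then
\[
\|x_{k+1}-x^*\|^2=\|x_k-x^*\|^2-2\gamma_k\inprd{F(\xhatk)}{\xhatk-x^*}-\|x_k-\xhatk\|^2+\|x_{k+1}-\xhatk\|^2 .
\]
The last term satisfies $\|x_{k+1}-\xhatk\|=\gamma_k\|F(\xhatk)-F(x_k)\|\le C_\nu\|x_k-\xhatk\|$.

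For the inner product, use strong monotonicity with $F(x^*)=0$: $\inprd{F(\xhatk)}{\xhatk-x^*}\ge\mu\|\xhatk-x^*\|^2$. Then lower bound $\|\xhatk-x^*\|^2\ge\frac12\|x_k-x^*\|^2-\|x_k-\xhatk\|^2$. Collecting the $\|x_k-\xhatk\|^2$ terms gives the coefficient $-(1-2\gamma_k\mu-\nu^2e^{2\nu})$. Since $\gamma_k\mu\le\nu\mu/L_0\le 2\nu$ (because $\mu\le L_0$ when $F(x^*)=0$), the condition $1-4\nu-2\nu^2e^{2\nu}\ge 0$ makes this coefficient nonpositive. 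Hence
\[
\|x_{k+1}-x^*\|^2\le(1-\gamma_k\mu)\|x_k-x^*\|^2 .
\]
This constant choice is why the hypothesis uses that equation. The exact splitting constant may need a small adjustment.

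\textbf{Step 2: a decrease in the transient phase.} The iterates are monotonically bounded, so $\|x_k-x^*\|\le D_0$. While $L_1\|F(x_k)\|>L_0$, we have $\gamma_k\ge\nu/(2L_1\|F(x_k)\|)$. We want an additive decrease rather than a multiplicative one. The key is the $-c\|x_k-\xhatk\|^2$ term from Step 1, which we keep instead of discarding. With $c\ge\frac12$ or so, using $\|x_k-\xhatk\|=\gamma_k\|F(x_k)\|\ge\nu/(2L_1)$, each such step gives
\[
\|x_{k+1}-x^*\|^2\le\|x_k-x^*\|^2-\frac{\nu^2}{8L_1^2}.
\]
Since the squared distance starts at $D_0^2$ and stays nonnegative, the number of steps with $L_1\|F(x_k)\|>L_0$ is at most $\floor{8L_1^2D_0^2/\nu^2}$. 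This matches the first term of the statement.

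Redoing Step 1 while retaining part of this negative term is the main obstacle. We must verify that the constants allow both the contraction factor $(1-\gamma_k\mu)$ and the retained amount $\frac12\|x_k-\xhatk\|^2$ simultaneously. If that fails, the fallback is to keep only the additive bound in the large-$F$ regime, discarding $\mu$ there, which still suffices.

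\textbf{Step 3: counting the linear phase.} On every other step, $\gamma_k\ge\nu/(2L_0)$, so the distance contracts by at least $(1-\nu\mu/(2L_0))$. Transient steps never increase the distance. Therefore, after all transient steps plus $\ceil{\frac{2L_0}{\nu\mu}\log(D_0^2/\eps)}$ good steps, using $1-t\le e^{-t}$, we get $\|x_{K+1}-x^*\|^2\le\eps$. The order in which the two kinds of steps occur does not matter, since we only count the total of each kind.
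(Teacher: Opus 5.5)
Your proposal is correct and follows essentially the paper's proof: the same descent inequality $\|x_{k+1}-x^*\|^2\le(1-\gamma_k\mu)\|x_k-x^*\|^2-(1-2\gamma_k\mu-\nu^2e^{2\nu})\gamma_k^2\|F(x_k)\|^2$, the same case split at $\|F(x_k)\|=L_0/L_1$, and the same counting of transient and contracting steps. The ``main obstacle'' you flag is not actually one. Your own chain $\gamma_k\mu\le\nu\mu/L_0$ with $\mu\le L_0$ gives $\gamma_k\mu\le\nu$, but you then weakened this to $2\nu$. With $\gamma_k\mu\le\nu$, the defining equation $1-4\nu-2\nu^2e^{2\nu}=0$ is exactly the statement $1-2\nu-\nu^2e^{2\nu}=\frac{1}{2}$, so the single inequality already yields both the contraction and the retained $\frac{1}{2}\gamma_k^2\|F(x_k)\|^2$ decrease. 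Your fallback of dropping $\mu$ on transient steps would also work, since $1-\nu^2e^{2\nu}\ge\frac{1}{2}$ at $\nu\approx0.21$.
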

The strongly monotone guarantee in
\cite{choudhury2025extragradient} removes exponential
dependence on the initial distance from the
$\eps$-dependent term, but retains it in the
$\eps$-independent term.
Corollary~\ref{cor: eg usm} removes this remaining
dependence, replacing the latter term with one quadratic
in $L_1\|x_0-x^*\|$.
Thus, our overall iteration complexity bound has no
exponential dependence on the initial distance to the solution.

We next present similar convergence guarantees for \peg in the strongly monotone regime.
    \begin{theorem}\label{thm: popov sm}
        Let $F$ be $\alpha$-symmetric $(L_0,L_1)$-Lipschitz and $\mu$-strongly monotone. Then the iterates of \peg with stepsizes given by (\ref{Popov stepsizes}) and $\nu$ chosen so that $1-2\nu-4C_\nu^2\geq 0$ satisfy
        \begin{equation*}
           R_k\leq (1-\ugamma\mu)^kR_0
        \end{equation*}
        where $R_k=\|x_k-x^*\|^2+\|x_k-\xhat_{k-1}\|^2$  and $\ugamma$ is defined in Lemma~\ref{lem:popov step lower bound}. 
    \end{theorem}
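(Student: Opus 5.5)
We prove a one-step contraction $R_{k+1}\le(1-\gamma_k\mu)R_k$ by induction on $k$, carrying along the invariant $R_j\le R_0$ for $j\le k$. The invariant is what lets us apply Lemma~\ref{lem:popov step lower bound} to get $\gamma_k\ge\ugamma$. Since $1-\gamma_k\mu\le 1-\ugamma\mu$, the contraction then gives the claimed rate, and it also re-establishes the invariant at step $k+1$. The base case is trivial because $\hat x_{-1}=x_0$ gives $R_0=\|x_0-x^*\|^2$.

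For the one-step inequality we follow the standard Popov analysis. First, the projection inequality for $x_{k+1}=P_U(x_k-\gamma_kF(\xhatk))$, tested against $x^*$, gives
\[
\|x_{k+1}-x^*\|^2\le\|x_k-x^*\|^2-\|x_{k+1}-x_k\|^2-2\gamma_k\langle F(\xhatk),x_{k+1}-x^*\rangle .
\]
Second, the projection inequality for $\xhatk=P_U(x_k-\gamma_kF(\xhat_{k-1}))$, tested against $x_{k+1}$, gives
\[
2\gamma_k\langle F(\xhat_{k-1}),\xhatk-x_{k+1}\rangle\le \|x_k-x_{k+1}\|^2-\|\xhatk-x_k\|^2-\|x_{k+1}-\xhatk\|^2 .
\]
We split $\langle F(\xhatk),x_{k+1}-x^*\rangle$ into three pieces:
\[
\langle F(\xhatk),\xhatk-x^*\rangle+\langle F(\xhat_{k-1}),x_{k+1}-\xhatk\rangle+\langle F(\xhatk)-F(\xhat_{k-1}),x_{k+1}-\xhatk\rangle .
\]
\begin{itemize}
\item For the first piece, strong monotonicity together with $\langle F(x^*),\xhatk-x^*\rangle\ge0$ gives a lower bound of $\mu\|\xhatk-x^*\|^2$.
\item The second piece is controlled by the projection inequality for $\xhatk$.
\item For the third piece, Lemma~\ref{lem:lipschitz analogue} with $\tilde x_k=\xhat_{k-1}$ and Young's inequality give
\[
2\gamma_k|\cdot|\le 2C_\nu\|\xhatk-\xhat_{k-1}\|\,\|x_{k+1}-\xhatk\|\le C_\nu\bigl(\tfrac{1}{\eta}\|\xhatk-\xhat_{k-1}\|^2+\eta\|x_{k+1}-\xhatk\|^2\bigr).
\]
\end{itemize}
We then use $\|\xhatk-\xhat_{k-1}\|^2\le 2\|\xhatk-x_k\|^2+2\|x_k-\xhat_{k-1}\|^2$. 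The new Lyapunov term $\|x_{k+1}-\xhatk\|^2$ appears with coefficient $-(1-C_\nu\eta)$, and the old term $\|x_k-\xhat_{k-1}\|^2$ with coefficient $+2C_\nu/\eta$.

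Next we convert the strong-monotonicity gain to the Lyapunov quantities. Using $\|\xhatk-x^*\|^2\ge\tfrac12\|x_{k+1}-x^*\|^2-\|x_{k+1}-\xhatk\|^2$ would yield a rate of $1/(1+\gamma_k\mu)$, which is the wrong form. Instead we use $\|\xhatk-x^*\|^2\ge \|x_k-x^*\|^2-2\langle x_k-\xhatk,x_k-x^*\rangle+\dots$, or more simply $2\|\xhatk-x^*\|^2\ge\|x_k-x^*\|^2-2\|x_k-\xhatk\|^2$. This gives $-2\gamma_k\mu\|\xhatk-x^*\|^2\le-\gamma_k\mu\|x_k-x^*\|^2+2\gamma_k\mu\|x_k-\xhatk\|^2$. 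Since $\gamma_k\mu\le\gamma_k\|F(\xhatk)-F(\xhat_{k-1})\|/\|\xhatk-\xhat_{k-1}\|\le C_\nu$ (by strong monotonicity and Cauchy--Schwarz; alternatively bound $\gamma_k\mu\le\nu$ directly), the extra term $2\gamma_k\mu\|x_k-\xhatk\|^2\le 2\nu\|x_k-\xhatk\|^2$ is absorbed by the $-\|\xhatk-x_k\|^2$ term.

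We want to choose $\eta$ so that three things hold:
\begin{itemize}
\item the coefficient of $\|\xhatk-x_k\|^2$, namely $-(1-2\nu-2C_\nu/\eta)$, is nonpositive;
\item $1-C_\nu\eta\ge 2C_\nu/\eta$, so the new term dominates the old;
\item $2C_\nu/\eta\le 1-\gamma_k\mu$, so the old term contracts with the right factor.
\end{itemize}
Taking $\eta=1/(2C_\nu)$ forces $4C_\nu^2\le\cdots$, which is presumably where the condition $1-2\nu-4C_\nu^2\ge0$ comes from. With these coefficients the bound reads
\[
R_{k+1}\le(1-\gamma_k\mu)\|x_k-x^*\|^2+(\text{coef})\|x_k-\xhat_{k-1}\|^2,
\]
and we need the coefficient to be at most $1-\gamma_k\mu$.

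The main obstacle is this bookkeeping: finding a single Young parameter and a way of handling the $\mu$-term such that both pieces of $R_k$ contract by exactly $(1-\gamma_k\mu)$ under precisely $1-2\nu-4C_\nu^2\ge0$. A secondary subtlety is that the induction must be ordered correctly. We need $R_k\le R_0$ to obtain $\gamma_k\ge\ugamma$, but the one-step contraction itself uses only Lemma~\ref{lem:lipschitz analogue}, so there is no circularity.
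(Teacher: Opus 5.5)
\textbf{The gap is in how you handle the cross term.} Combining your two projection inequalities gives exactly Lemma~\ref{lem:unified predescent}, with $-\|x_{k+1}-\hat{x}_k\|^2$ on the right-hand side. That term has to become the second half of $R_{k+1}$, so it must survive with coefficient exactly $1$. Young's inequality with parameter $\eta$ spends a fraction $C_\nu\eta$ of it. What your steps actually yield is
\[
\|x_{k+1}-x^*\|^2+(1-C_\nu\eta)\|x_{k+1}-\hat{x}_k\|^2\le(1-\gamma_k\mu)\|x_k-x^*\|^2-\Bigl(1-2\gamma_k\mu-\frac{2C_\nu}{\eta}\Bigr)\|x_k-\hat{x}_k\|^2+\frac{2C_\nu}{\eta}\|x_k-\hat{x}_{k-1}\|^2 .
\]
For every $\eta>0$ the left side is strictly smaller than $R_{k+1}$. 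So your line ``$R_{k+1}\le(1-\gamma_k\mu)\|x_k-x^*\|^2+(\mathrm{coef})\|x_k-\hat{x}_{k-1}\|^2$'' does not follow, and no choice of $\eta$ repairs it.

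At best you get a contraction of the weighted quantity $\tilde{R}_k=\|x_k-x^*\|^2+c\|x_k-\hat{x}_{k-1}\|^2$ with $c=1-C_\nu\eta$. The correct requirement is then $2C_\nu/\eta\le c(1-\gamma_k\mu)$; your two separate conditions do not imply it. The best choice is $\eta=1/(2C_\nu)$, which turns the requirement into $8C_\nu^2\le 1-\gamma_k\mu$. Knowing only $\gamma_k\mu\le\nu$, this means $1-\nu-8C_\nu^2\ge0$, and that is not implied by $1-2\nu-4C_\nu^2\ge0$. For example, with $\alpha\in(0,1)$ and $\nu=0.30$ one has $1-2\nu-4\nu^2=0.04$ but $1-\nu-8\nu^2=-0.02$. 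Even where it holds, you would only get $R_k\le 2(1-\ugamma\mu)^kR_0$. You would also get only $\tilde{R}_k\le R_0$, not the hypothesis $R_k\le R_0$ that Lemma~\ref{lem:popov step lower bound} needs.

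\textbf{The missing idea is to bound the cross term without touching $-\|x_{k+1}-\hat{x}_k\|^2$.} The points $\hat{x}_k$ and $x_{k+1}$ are projections of $x_k-\gamma_kF(\hat{x}_{k-1})$ and $x_k-\gamma_kF(\hat{x}_k)$. By nonexpansiveness, $\|x_{k+1}-\hat{x}_k\|\le\gamma_k\|F(\hat{x}_k)-F(\hat{x}_{k-1})\|$, hence
\[
2\gamma_k\|F(\hat{x}_k)-F(\hat{x}_{k-1})\|\,\|x_{k+1}-\hat{x}_k\|\le 2\gamma_k^2\|F(\hat{x}_k)-F(\hat{x}_{k-1})\|^2\le 4C_\nu^2\bigl(\|x_k-\hat{x}_k\|^2+\|x_k-\hat{x}_{k-1}\|^2\bigr).
\]
Now the full $\|x_{k+1}-\hat{x}_k\|^2$ moves to the left and you get exactly $R_{k+1}$. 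Combine this with your (correct) bound $-2\gamma_k\mu\|\hat{x}_k-x^*\|^2\le-\gamma_k\mu\|x_k-x^*\|^2+2\gamma_k\mu\|x_k-\hat{x}_k\|^2$. Then write $(1-\gamma_k\mu)\|x_k-x^*\|^2=(1-\gamma_k\mu)R_k-(1-\gamma_k\mu)\|x_k-\hat{x}_{k-1}\|^2$. The leftover coefficients are $-(1-2\gamma_k\mu-4C_\nu^2)$ on $\|x_k-\hat{x}_k\|^2$ and $-(1-\gamma_k\mu-4C_\nu^2)$ on $\|x_k-\hat{x}_{k-1}\|^2$. Both are nonpositive by $\gamma_k\mu\le\nu$ and $1-2\nu-4C_\nu^2\ge0$. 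This is the paper's argument: its PEG descent lemma with $\beta=1$. Your induction ordering and your use of $\gamma_k\mu\le\nu$ are correct and match the paper.
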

    
Our constraint on $\nu$ in Theorem~\ref{thm: popov sm} implies that $\nu\leq0.26\ifaone$ and $\nu\leq0.30\ifasmall$.
Theorem~\ref{thm: popov sm} provides a similar convergence guarantee to that of \cite{pmlr-v235-vankov24a} for $2$-quasi sharp operators, but with several advantages.  First, the stepsize in \cite{pmlr-v235-vankov24a} requires knowledge of the strong monotonicity parameter $\mu$, while the convergence of Theorem~\ref{thm: popov sm} is agnostic to this value. Second, when $\alpha=1$,
Theorem~\ref{thm: popov sm}  allows stepsizes at least twice as large as \cite{pmlr-v235-vankov24a}'s. Finally, when $\alpha\in (0,1)$ and $\nu$ is taken at its maximal value in the unconstrained setting, the stepsize in Theorem~\ref{thm: popov sm} is asymptotically upper bounded by $\nicefrac{0.30}{K_0}$ as $\|F(\xhatk)\|\rightarrow 0$, which is an improvement over the upper bound of $\min\set{\nicefrac{1}{(4\mu)},\nicefrac{1}{(6\sqrt{2}K_0)}}$ of \cite{pmlr-v235-vankov24a}. 
As a final note on Theorem~\ref{thm: popov sm}, as either $L_1\rightarrow 0$ or $\alpha\rightarrow0$, we recover the expected $\bigo{\frac{L}{\mu}\log\frac{1}{\varepsilon}}$ iteration complexity for \peg in the $L$-Lipschitz, $\mu$-strongly monotone setting \citep{gidel2018a}.

Similar to our results for \eg, we also obtain a refined iteration complexity bound
for \peg in the unconstrained setting with $\alpha=1$,
without exponential dependence on the initial distance
$\|x_0-x^*\|$.

\begin{corollary}\label{cor: popov sm}
      Let $F$ be $1$-symmetric $(L_0,L_1)$-Lipschitz and $\mu$-strongly monotone, and $U=\R^n$ (unconstrained case). Then the iterates of \peg with stepsizes given by (\ref{Popov stepsizes}) and $\nu$ chosen so that $\nu\approx0.17$ is the solution to the $1-4\nu-8\nu^2e^{2\nu}=0$ satisfy $R_k\leq\varepsilon$ after 
     \begin{equation*}
         \floor{\frac{8L_1^2}{\nu^2} \exp\p{\frac{2\nu^2 e^\nu}{1-\nu e^\nu}} R_0 } 
         +  \ceil{\frac{2L_0}{\nu\mu}\exp\p{\frac{\nu^2 e^\nu}{1-\nu e^\nu}} \log\frac{R_0}{\eps}}
     \end{equation*}
   iterates, where $R_k=\|x_k-x^*\|^2+\|x_k-\xhat_{k-1}\|^2$.
\end{corollary}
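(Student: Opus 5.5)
The plan is to run a two-phase argument built on the one-step contraction from the proof of Theorem~\ref{thm: popov sm}. In the unconstrained case $F(x^*)=0$, and the stepsize can be bounded below using $\|F(\xhat_{k-1})\|$ itself rather than the worst-case bound $M_{1,\mathrm{PEG}}$. With this $\nu$, the key inequality I would aim for is
\[
R_{k+1}\le R_k-\frac{\gamma_k\mu}{2}R_k-c\,\gamma_k^2\|F(\xhat_{k-1})\|^2 ,
\]
or an equivalent pair of inequalities. It should come from the $\nu$-condition $1-4\nu-8\nu^2e^{2\nu}=0$, which is $1-2\nu-4C_\nu^2\ge0$ with the budget halved. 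That is, I would redo the Theorem~\ref{thm: popov sm} computation while keeping half of the contraction slack to control the operator norm. This gives $R_k\le R_0$ throughout, so Lemma~\ref{lem:popov step lower bound}'s bounding argument applies. In particular $d_k^2\le R_k\le R_0$, and the $d_k$ terms in the exponent can be handled as in that lemma, producing the factor $E:=\exp\p{\frac{\nu^2e^\nu}{1-\nu e^\nu}}$. Thus I expect
\[
\gamma_k\ge\frac{\nu}{E\,(L_0+L_1\|F(\xhat_{k-1})\|)} .
\]

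\textbf{Phase split.} Call iteration $k$ ``large'' if $L_1\|F(\xhat_{k-1})\|>L_0$ and ``small'' otherwise.
\begin{itemize}
\item On small iterations, $\gamma_k\ge\frac{\nu}{2L_0E}$, so the kept contraction gives $R_{k+1}\le\p{1-\frac{\nu\mu}{2L_0E}}R_k$. Since $R$ never increases, at most $\ceil{\frac{2L_0E}{\nu\mu}\log\frac{R_0}{\eps}}$ small iterations can occur before $R_k\le\eps$.
\item On large iterations, $\gamma_k\|F(\xhat_{k-1})\|\ge\frac{\nu}{2L_1E}$. The retained term then forces $R_{k+1}\le R_k-\frac{c\nu^2}{4L_1^2E^2}$.
\end{itemize}
Summing the decrease from large iterations, and using $R\ge0$ and monotonicity of $R$, the number of large iterations is at most $\frac{4L_1^2E^2R_0}{c\nu^2}$. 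With the right constant this should match $\floor{\frac{8L_1^2}{\nu^2}E^2R_0}$, where $E^2=\exp\p{\frac{2\nu^2e^\nu}{1-\nu e^\nu}}$. Adding the two counts gives the stated total, because the phases need not be contiguous: each count bounds iterations of its type until $R_k\le\eps$.

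\textbf{Main obstacle.} The hard part is the key inequality: obtaining a negative term proportional to $\gamma_k^2\|F(\xhat_{k-1})\|^2$, or $\gamma_k^2\|F(\xhat_k)\|^2$, inside the \peg Lyapunov descent with the exact constant needed. In the \peg analysis the natural negative terms are $\|x_{k+1}-\xhat_k\|^2$ and $\|\xhat_k-x_k\|^2$, where $\xhat_k-x_k=-\gamma_kF(\xhat_{k-1})$. I would keep a fraction of $\|\xhat_k-x_k\|^2=\gamma_k^2\|F(\xhat_{k-1})\|^2$ rather than using it all to absorb the Lemma~\ref{lem:lipschitz analogue} error. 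The strengthened condition on $\nu$ is what should free exactly this fraction. One point to check is whether the count is indexed via $\xhat_{k-1}$ or $\xhat_k$, with a one-step shift if needed. I would also check that using $R_{k+1}\le R_k$ in the large-iteration bound (rather than a $\gamma_k$-weighted version) does not lose the constant $8$.
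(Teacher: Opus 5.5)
Your plan is essentially the paper's proof. With $\beta=1$ in Theorem~\ref{thm: popov sm}'s computation and $\gamma_k\mu\le\nu$, the choice of $\nu$ makes the coefficient of $\|x_k-\xhat_k\|^2=\gamma_k^2\|F(\xhat_{k-1})\|^2$ equal to $1-2\nu-4C_\nu^2=\frac{1}{2}$, and the $\|x_k-\xhat_{k-1}\|^2$ term (coefficient $-(\frac{1}{2}+\nu)$) is dropped. This gives $R_{k+1}\le(1-\gamma_k\mu)R_k-\frac{1}{2}\gamma_k^2\|F(\xhat_{k-1})\|^2$. The factor $E$ then comes from the bound $\|x_k-\xhat_{k-1}\|\le\frac{\nu^2e^\nu}{L_1(1-\nu e^\nu)}$ in Lemma~\ref{lem:popov step lower bound}, and your two-case count with $c=\frac{1}{2}$ yields exactly the stated constants.

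One correction: the contraction in your displayed key inequality should be the full $\gamma_k\mu$, not $\gamma_k\mu/2$. The freed slack comes from the $\|x_k-\xhat_k\|^2$ coefficient, not from the contraction, and halving the contraction would double the $\eps$-dependent term. Your small-iteration bullet already uses the full factor.
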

\vspace{-2mm}
\subsection{Line-search Variants}

In several applications, knowing the values of Lipschitz parameters $L_0$ and $L_1$ is impossible.
In these scenarios, when $L_0$ and $L_1$ are unknown, we propose using the line-search
variants \adaeg and \adapeg (Alg.~\ref{alg:AdaptiveAlgo} in Appendix~\ref{app: backtrack proofs}). For this direction we focus on the case of $\alpha=1$ for simplicity of exposition, and our approach retains the updates
\eqref{EG_update} and \eqref{PEG_update} with the stepsizes from \eqref{EG Stepsizes} and
\eqref{Popov stepsizes}, replacing $L_0,L_1$ with
positive estimates $\lambda_k^0,\lambda_k^1$.
In particular, our proposed \algname{(P)EG-LS} uses parameter updates similar to those
in the recent works of adaptive Frank--Wolfe algorithm
\citep{vyguzov2026frankwolfealgorithmsl0l1smooth}
and PolyakEG method
\citep{yoon2026polyaktypeextragradientmethodsmonotone}.

At each iteration of our line-search variants, a trial stepsize is accepted only
when \eqref{l0l1 half step general} holds with
$C_\nu=\nu e^\nu$.
Upon rejection, we alternate between increasing
$\lambda_k^0$ and $\lambda_k^1$, recompute the stepsize
and extrapolated point, and repeat the test.
The estimates and alternation state are carried across
iterations. We can prove that with this approach the while-loop of of the line-search is finite (when the line-search factor $\rho>2$). 
Together with boundedness of the (strongly) monotone iterates,
this yields positive uniform lower bounds on the
accepted stepsizes and the following convergence guarantees.
\begin{corollary}
Let $F$ be monotone and $1$-symmetric $(L_0,L_1)$-Lipschitz.
Run Algorithm~\ref{alg:AdaptiveAlgo} with positive
initial estimates, $\rho>2$, and $\nu>0$ satisfying
$\nu e^\nu<1$ for \adaeg or
$\nu e^\nu< \frac12$ for \adapeg. Then if 
\begin{itemize}[leftmargin=*]
    \item $F$ is \textbf{monotone}: the stepsize-weighted average $\bar{x}_K$
satisfies: 
$$\gap{\xbark}=\bigo{K^{-1}},$$ 
under the same conditions as 
Theorem~\ref{thm: EG monotone}
and Theorem~\ref{thm: popov monotone}.
\item $F$ is \textbf{strongly monotone}: the iterates $x_k$ have linear convergence in $\|x_k-x^*\|^2$ for \adaeg and in $R_k= \|x_k-x^*\|^2+\|x_k-\xhat_{k-1}\|^2$ for \adapeg.
\end{itemize} 
The explicit bounds are given in
Appendix~\ref{app: backtrack proofs}.
\end{corollary}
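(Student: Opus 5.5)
The plan is to reduce everything to the fixed-parameter analyses of Theorems~\ref{thm: EG monotone}, \ref{thm: popov monotone}, \ref{thm: EG sm} and \ref{thm: popov sm}. Those proofs use the parameters $L_0,L_1$ only through two facts. The first is the acceptance inequality \eqref{l0l1 half step general} with $C_\nu=\nu e^\nu$, which Lemma~\ref{lem:lipschitz analogue} supplies. The second is a uniform stepsize lower bound $\gamma_k\ge\ugamma$, supplied by Lemmas~\ref{lem:EG step lower bound} and~\ref{lem:popov step lower bound} once the iterates are known to be bounded. In Algorithm~\ref{alg:AdaptiveAlgo}, \eqref{l0l1 half step general} holds by construction at every accepted step. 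So I will first re-run the one-step descent inequalities of those theorems verbatim. They give $\|x_{k+1}-x^*\|^2\le\|x_k-x^*\|^2-(1-C_\nu^2)\|x_k-\xhatk\|^2-2\gamma_k\inprd{F(\xhatk)}{\xhatk-x^*}$ for \adaeg, and the analogous Lyapunov decrease of $R_k$ for \adapeg when $C_\nu<\tfrac12$. From these I get $\|x_k-x^*\|\le D_0$, resp.\ $R_k\le R_0$, for every $k$, independently of the values of $\lambda_k^0,\lambda_k^1$.

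Next I will show the line search is finite and the estimates stay bounded. Take $\lambda^0\ge L_0$ and $\lambda^1\ge L_1$. Then the trial stepsize is at most the true-parameter stepsize, and the proof of Lemma~\ref{lem:lipschitz analogue} shows that \eqref{l0l1 half step general} holds. For \peg this uses monotonicity of $e^{\lambda^1 d_k}$ in $\lambda^1$. Hence the test can fail only while $\lambda^0<L_0$ or $\lambda^1<L_1$. The estimates are carried across iterations and alternately multiplied by $\rho$. Therefore, after finitely many rejections in total, roughly $2\lceil\log_\rho(\max\{L_0/\lambda_0^0,L_1/\lambda_0^1\})\rceil+2$ of them, both estimates exceed the true constants and are never increased again. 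This gives $\lambda_k^0\le\bar\lambda^0:=\max\{\lambda_0^0,\rho L_0\}$ and $\lambda_k^1\le\bar\lambda^1:=\max\{\lambda_0^1,\rho L_1\}$ for all $k$.

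The one wrinkle is the alternation. The test may keep failing while only one estimate is too small, so that increments are "wasted" on the other estimate. That could push the other estimate beyond $\rho$ times its true constant. This is presumably where the requirement $\rho>2$ enters. I expect this to be the main obstacle: obtaining a clean uniform bound on the overshoot of the already-large estimate. The first thing to try is the following counting argument. While the test is failing, at least one estimate is below its true value. Each full alternation cycle multiplies both estimates by $\rho$. So the overshoot factor is bounded by $\rho^{c}$, where $c$ depends on $\log_\rho$ of the initial ratio. A cruder but still finite bound suffices for the claimed rates.

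With $\bar\lambda^0,\bar\lambda^1$ in hand, I will redo Lemmas~\ref{lem:EG step lower bound} and~\ref{lem:popov step lower bound} with $L_0,L_1$ replaced by these upper bounds in the stepsize denominators. The true $L_0,L_1$ are kept wherever Proposition~\ref{lem: l0l1 smooth} is used to bound $\|F(x_k)\|$ by $M_1$ or $M_{1,\mathrm{PEG}}$. This gives a positive $\ugamma_{\mathrm{LS}}$. For \peg the bound on $d_k$ inside the exponential comes, as before, from $R_k\le R_0$ together with the acceptance inequality. The conclusions then follow exactly as in the fixed-parameter proofs. In the monotone case, summing the one-step inequality with $\gamma_k\ge\ugamma_{\mathrm{LS}}$ and using monotonicity $\inprd{F(\xhatk)}{\xhatk-y}\ge\inprd{F(y)}{\xhatk-y}$ over $y\in B$ gives $\gap{\xbark}\le c\|x_0-x^*\|^2/(\ugamma_{\mathrm{LS}}K)$, with the same $B$ and the same membership $\xbark\in B$. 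In the strongly monotone case, the same contraction arguments give $\|x_k-x^*\|^2\le(1-\ugamma_{\mathrm{LS}}\mu)^k\|x_0-x^*\|^2$, resp.\ $R_k\le(1-\ugamma_{\mathrm{LS}}\mu)^kR_0$.
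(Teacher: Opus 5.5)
Your monotone argument is sound and is essentially the paper's. The gap is in the strongly monotone case. You assume the fixed-parameter proofs use $L_0,L_1$ only through the acceptance inequality and $\gamma_k\ge\ugamma$, but this is false for Theorems~\ref{thm: EG sm} and~\ref{thm: popov sm}. There one takes $\beta=1$ in \eqref{EG predescent}, resp.\ \eqref{popov predescent}, and drops $-(1-2\gamma_k\mu-C_\nu^2)\|x_k-\hat{x}_k\|^2$; for \peg one also drops $(4C_\nu^2+\gamma_k\mu-1)\|x_k-\hat{x}_{k-1}\|^2$. Dropping these terms uses $\gamma_k\mu\le\nu$, which comes from the stepsize formula with the \emph{true} $L_0,L_1$ via \eqref{mu L0 L1}. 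It also uses the stricter restrictions $1-2\nu-\nu^2e^{2\nu}\ge0$, resp.\ $1-2\nu-4C_\nu^2\ge0$.

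In Algorithm~\ref{alg:AdaptiveAlgo} a trial step can be accepted while $\lambda_k^0<L_0$ or $\lambda_k^1<L_1$, so $\gamma_k\mu\le\nu$ is not available. Acceptance plus strong monotonicity (via $\mu\|\hat{x}_k-\tilde{x}_k\|\le\|F(\hat{x}_k)-F(\tilde{x}_k)\|$) gives only $\gamma_k\mu\le C_\nu$ at steps with $\hat{x}_k\neq\tilde{x}_k$. With that bound the $\beta=1$ argument works only when $1-2C_\nu-C_\nu^2\ge0$, resp.\ $1-2C_\nu-4C_\nu^2\ge0$. This excludes much of the allowed range $\nu e^\nu<1$, resp.\ $\nu e^\nu<\frac{1}{2}$, so your rate $(1-\ugamma_{\mathrm{LS}}\mu)^k$ does not follow.

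The missing idea is a $\gamma_k$-dependent Young parameter: $\beta=2\gamma_k\mu/(1-C_\nu^2)$ for \adaeg, resp.\ $\beta=2\gamma_k\mu/(1-4C_\nu^2)$ for \adapeg. This choice cancels the $\|x_k-\hat{x}_k\|^2$ term exactly. It leaves the $\|x_k-\hat{x}_{k-1}\|^2$ coefficient equal to $-(1-4C_\nu^2)^2/(1-4C_\nu^2+2\gamma_k\mu)\le0$. The resulting contraction factor is $1-\frac{2\gamma_k\mu(1-C_\nu^2)}{2\gamma_k\mu+1-C_\nu^2}\in(0,1)$, with $4C_\nu^2$ in place of $C_\nu^2$ for \adapeg. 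It needs no bound on $\gamma_k\mu$ and is decreasing in $\gamma_k$, so $\gamma_k$ can be replaced by $\ugamma$. This is the (different) rate the paper actually proves.

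A smaller point concerns the line search itself: the estimates are not multiplied by $\rho$. The update factor for $\lambda_k^0$ is $\rho-\frac{\lambda_k^0}{\lambda_k^0+\lambda_k^1\|F(\tilde{x}_k)\|}$, and the factor for $\lambda_k^1$ is analogous; both lie in $[\rho-1,\rho]$. That is where $\rho>2$ is used: each update grows the estimate by at least $\rho-1>1$. It has nothing to do with the overshoot. The total number of rejections is therefore at most $N=2\max\{0,\lceil\log(L_0/\lambda_{-1}^0)/\log(\rho-1)\rceil,\lceil\log(L_1/\lambda_{-1}^1)/\log(\rho-1)\rceil\}$, with $\log(\rho-1)$ rather than $\log\rho$.

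Your bound $\lambda_k^i\le\max\{\lambda_0^i,\rho L_i\}$ (in your indexing) is indeed false because of the alternation. The crude fix you suggest is exactly the paper's: each estimate gets at most $\lceil N/2\rceil$ updates, each by a factor at most $\rho$, so $\lambda_k^i\le\rho^{\lceil N/2\rceil}\lambda_{-1}^i$. Finally, for \adapeg the recursion bound on $d_k$ now carries $\lambda_{-1}^1$ instead of $L_1$, since $\gamma_{k-1}\|F(\hat{x}_{k-2})\|\le\nu/\lambda_{-1}^1$. Alternatively, you can simply use $d_k\le\|x_0-x^*\|$, which follows from $R_k\le R_0$.
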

We note that unlike the backtracking results for \eg in \cite{pmlr-v235-vankov24a}, our convergence results do not make any additional assumptions on the relationship between $\gamma_k$ and $\mu$. Additionally, these variants also apply to $\alpha$-symmetric operators
with $\alpha\in(0,1)$: the inequality $t^\alpha\leq 1+t$
implies that such an operator is also
$1$-symmetric $(L_0+L_1,L_1)$-Lipschitz.

The pseudocode, rejection bound, explicit stepsize lower
bounds, and proofs appear in
Appendix~\ref{app: backtrack proofs}.

\section{Numerical Experiments}\label{sec: Numerical Experiments}
In this section, we perform several experiments to demonstrate the efficacy of our methods for solving several $(L_0,L_1)$-Lipschitz VIs in the constrained setting. Experiments were performed on a personal MacBook with an M2 Chip and 24 GB of RAM.

\begin{figure*}[t]
    \centering

    \begin{minipage}[t]{0.48\linewidth}
        \centering
        \includegraphics[width=\linewidth]{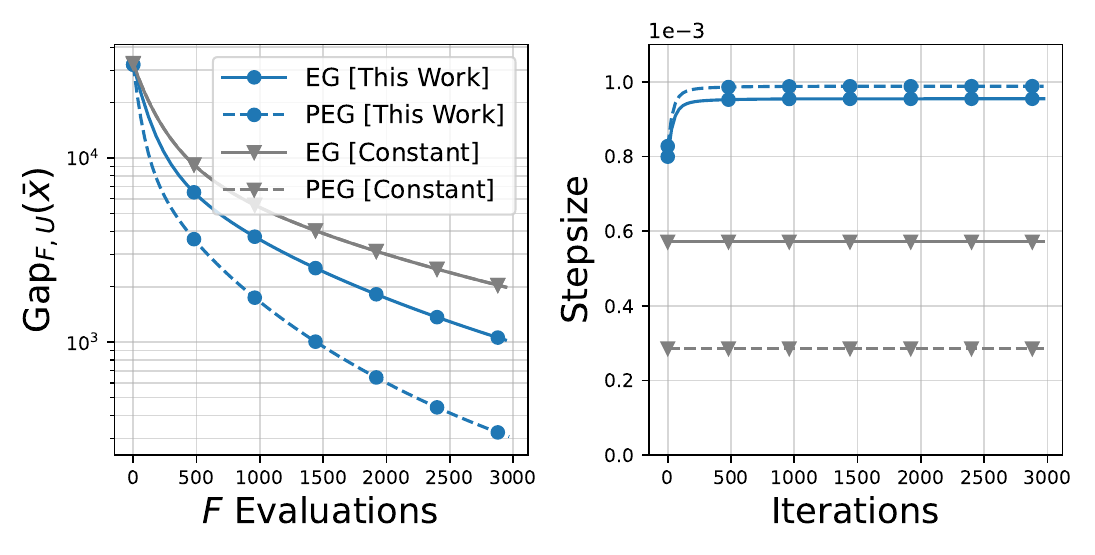}
        \caption{We compare \algname{EG} and \algname{PEG} using our stepsize and constant stepsizes based on the corresponding $L$-Lipschitz constant for a traffic equilibrium problem.}
        \label{fig:traffic plot}
    \end{minipage}
    \hfill
    \begin{minipage}[t]{0.48\linewidth}
        \centering
        \includegraphics[width=\linewidth]{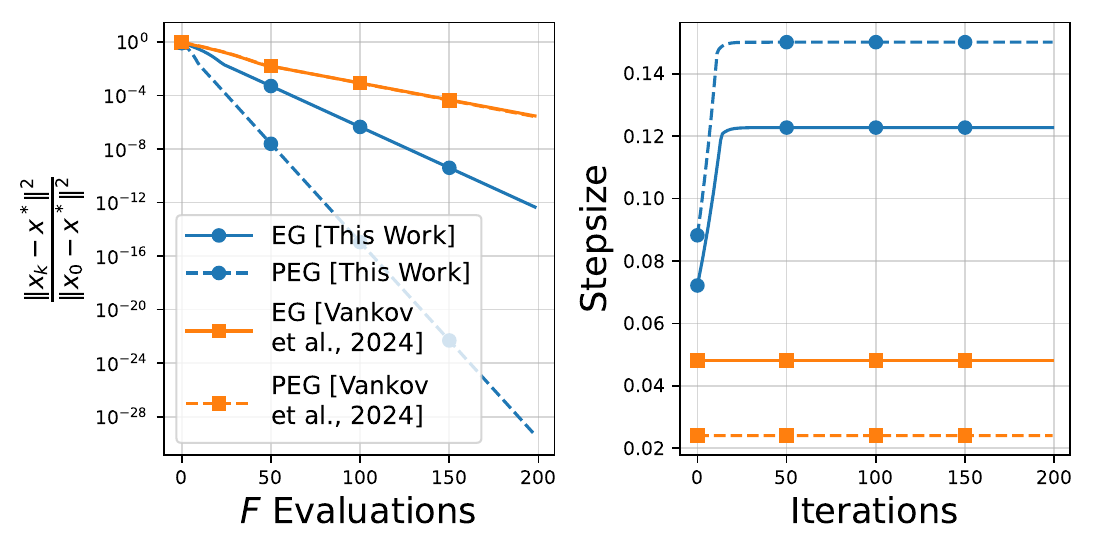}
        \caption{We compare the performance of \algname{EG} and \algname{PEG} with our stepsizes against those proposed by \cite{pmlr-v235-vankov24a} on a strongly monotone problem.}
        \label{fig:strongly monotone}
    \end{minipage}

\end{figure*}

\textbf{Performance on Traffic Equilibrium Problem} First, we consider the traffic equilibrium problem \citep{dafermos_traffic_1980} to demonstrate our stepsize performance on a monotone problem. We evaluate a traffic network inspired by \cite{traffic_problem_source} using the Bureau of Public Roads (BPR) cost function \citep{bureau1964traffic}. As detailed in Appendix~\ref{app:traffic}, the corresponding VI is a monotone $\frac{3}{4}$-symmetric $(0,0.103)$-Lipschitz operator, while the classical $L$-Lipschitz constant for the network under consideration is $1750$. As seen in Figure~\ref{fig:traffic plot}, our methods demonstrate the expected sublinear convergence of the \texttt{Gap} function. Utilizing the theoretical $(L_0,L_1)$-Lipschitz constants with our stepsize strategy allows us to take larger steps than the classical constant stepsizes. Consequently, our methods have the fastest convergence. Surprisingly, our theoretical stepsizes for \peg are larger than those of \eg. Combined with the fact that \peg requires only one operator evaluation per iteration, \peg with our stepsizes exhibits the fastest convergence in terms of operator evaluations.

\textbf{Performance on a Strongly Monotone Problem} We next compare our stepsizes with those proposed by \cite{pmlr-v235-vankov24a} on a strongly monotone problem. We consider the operator $F(x) = (x_1^2\quad x_2^2)^\top$ with $x$ constrained to the disc of radius $\nicefrac{1}{4}$ centered at $(\nicefrac{1}{2},\nicefrac{1}{2})$. This problem is $\frac{1}{2}$-symmetric $(0,2)$-Lipschitz \citep{choudhury2025extragradient}. As shown in Figure \ref{fig:strongly monotone}, our method exhibits linear convergence to the solution as expected. As seen in the left panel, our methods exhibit faster convergence to the solution, and in the right panel, we see our methods exhibit a greater increase in stepsizes over the course of the optimization trajectory. Like the traffic equilibrium problem, \peg with our stepsizes performs best overall.

\textbf{Performance on Min-max Problem} We conclude by demonstrating the performance of our line-search methods on a quartic $1$-symmetric $(L_0,L_1)$-Lipschitz min-max problem based on \cite{choudhury2025extragradient}:
\begin{equation}\label{eq:min_max_cubic_Rd}
    \min_{w_1 \in U_1} \max_{w_2 \in U_2}  \frac{1}{4} \|w_1\|^4+ w_1^{\top} \B w_2 - \frac{1}{4}\|w_2\|^4
\end{equation}

where $\B\in \R^{50 \times 50}$ is a positive definite matrix. We perform the experiment with constraint sets $U_1,U_2\subset\R^{50}$ defined to be balls centered at the ones vectors with radius $\sqrt{50}$ for the monotone experiment and radius $3$ for the strongly monotone experiment. We compare our methods against both \algname{aGRAAL} \citep{malitsky_golden_2020}, an adaptive algorithm for solving monotone locally Lipschitz VIs, and \cite{pmlr-v235-vankov24a}'s \algname{EG Backtracking} algorithm which has convergence guarantees for strongly monotone $(L_0,L_1)$-Lipschitz VIs. As can be seen in Figure~\ref{fig:minimax}, our methods show the expected sublinear convergence in the monotone setting and linear convergence in the strongly monotone setting, and have the fastest convergence of the algorithms tested. Additional details on this problem and hyperparameter tuning for each of the algorithms tested can be found in Appendix~\ref{app:minimax}.

\begin{figure*}[t]
    \centering
        \includegraphics[width=\linewidth]{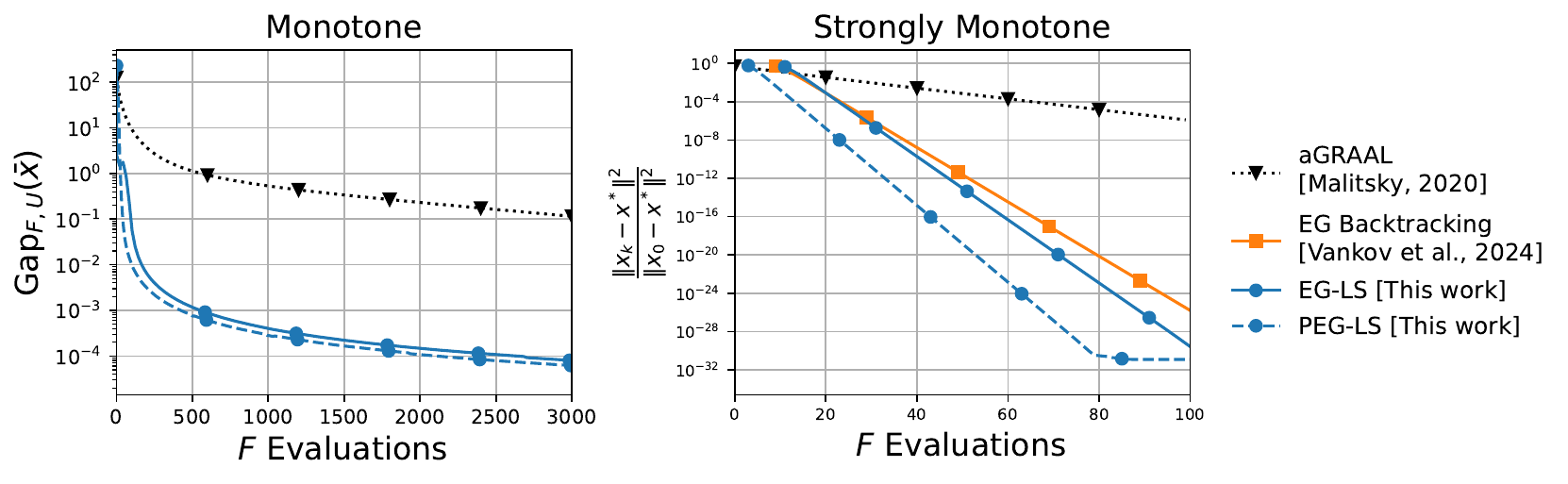}
    \caption{In Figure \ref{fig:minimax}, we assess the performance of \pegls on a min-max problem (\ref{eq:min_max_cubic_Rd}) whose monotonicity can be controlled by the constraint set $U_1\times U_2$.}
    \label{fig:minimax}
\end{figure*}

\section{Conclusion}

We establish sublinear convergence of \eg and \peg for
monotone $\alpha$-symmetric
$(L_0,L_1)$-Lipschitz VIs and linear convergence for
strongly monotone VIs, using stepsizes that do not require
knowledge of the strong monotonicity parameter.
For unconstrained problems, we further obtain iteration complexity bounds
without exponential dependence on the initial distance
to the solution. We also develop line-search variants
that retain these convergence rates without requiring
knowledge of $L_0$ or $L_1$.

While this work provides convergence analysis applicable to non-Lipschitz settings, relaxing monotonicity for $(L_0,L_1)$-Lipschitz is an important direction for future work. Specifically, extending \cite{choudhury2025extragradient}'s results for Weak Minty root-finding problems to the VI setting is a natural next step for non-monotone convergence guarantees beyond the $p$-quasi sharp operators treated in \cite{pmlr-v235-vankov24a}. Additionally, while initial progress has been made in  the stochastic setting for $(L_0,L_1)$-Lipschitz operators \citep{vankov2025stochasticvip}, convergence guarantees for stochastic monotone operators remain an open question. 

\section*{Acknowledgments}
The authors’ contribution to this work was supported by NSF CCF 2504626, NSF CAREER
2542902, and AFOSR YIP FA9550-26-1-B353.

\newpage

\subsection*{AI use statement}

In this work, we used generative AI tools to verify our already derived theoretical proofs and to provide feedback on our final experiments. We also used ChatGPT to help compress our exposition for the final version of this work to fit in the given space.  

We have not used generative AI tools to generate novel convergence results (that authors did not obtain first), formulate mathematical claims, or implement our proposed methods. 

 We take responsibility for the final content of this work, including text, claims, or artifacts produced with the aid of generative AI.

\bibliographystyle{abbrv}
\bibliography{references}

@inproceedings{
    Zhang2020Why,
    title={Why Gradient Clipping Accelerates Training: A Theoretical Justification for Adaptivity},
    author={Jingzhao Zhang and Tianxing He and Suvrit Sra and Ali Jadbabaie},
    booktitle={ICLR},
    year={2020},
    furl={https://openreview.net/forum?id=BJgnXpVYwS}
}

@article{yoon2026same,
  title={On Same-Sample and Independent-Sample Stochastic Extragradient for Monotone Variational Inequalities},
  author={Yoon, TaeHo and Loizou, Nicolas},
  journal={arXiv preprint arXiv:2608.06182},
  year={2026}
}

@inproceedings{
    gidel2018a,
    title={A Variational Inequality Perspective on Generative Adversarial Networks},
    author={Gauthier Gidel and Hugo Berard and Gaëtan Vignoud and Pascal Vincent and Simon Lacoste-Julien},
    booktitle={ICLR},
    year={2019},
    furl={https://openreview.net/forum?id=r1laEnA5Ym},
}

@inproceedings{
    choudhury2023singlecall,
    title={Single-Call Stochastic Extragradient Methods for Structured Non-monotone Variational Inequalities: Improved Analysis under Weaker Conditions},
    author={Sayantan Choudhury and Eduard Gorbunov and Nicolas Loizou},
    booktitle={NeurIPS},
    year={2023},
    furl={https://openreview.net/forum?id=tFeaLw9AWn}
}

@article{Korpelevich76,
    author = {G.M. Korpelevich},
    title = {The extragradient method for finding saddle points and other
problems},
    journal = {Matecon},
    year ={1976} ,
    volume={12},
    number={4},
    pages={747-756}
}

@inproceedings{
    gorbunov2025methods,
    title={Methods for Convex {$(L_0,L_1)$}-Smooth Optimization: Clipping, Acceleration, and Adaptivity},
    author={Eduard Gorbunov and Nazarii Tupitsa and Sayantan Choudhury and Alen Aliev and Peter Richt{\'a}rik and Samuel Horv{\'a}th and Martin Tak{\'a}{\v{c}}},
    booktitle={ICLR},
    year={2025},
    furl={https://openreview.net/forum?id=0wmfzWPAFu}
}

@InProceedings{pmlr-v235-vankov24a,
  title = 	 {Generalized Smooth Variational Inequalities: Methods with Adaptive Stepsizes},
  author =       {Vankov, Daniil and Nedich, Angelia and Sankar, Lalitha},
  booktitle = 	 {ICML},
  year = 	 {2024},
  furl = 	 {https://proceedings.mlr.press/v235/vankov24a.html},
}

@InProceedings{pmlr-v202-chen23ar,
  title = 	 {Generalized-Smooth Nonconvex Optimization is As Efficient As Smooth Nonconvex Optimization},
  author =       {Chen, Ziyi and Zhou, Yi and Liang, Yingbin and Lu, Zhaosong},
  booktitle = 	 {ICML},
  year = 	 {2023},
  furl = 	 {https://proceedings.mlr.press/v202/chen23ar.html},
}

@InProceedings{pmlr-v151-gorbunov22a,
  title = 	 { Extragradient Method: O(1/K) Last-Iterate Convergence for Monotone Variational Inequalities and Connections With Cocoercivity },
  author =       {Gorbunov, Eduard and Loizou, Nicolas and Gidel, Gauthier},
  booktitle = 	 {AISTATS},
  year = 	 {2022},
  furl = 	 {https://proceedings.mlr.press/v151/gorbunov22a.html},
}

@article{popov1980,
    author = {Popov, L. D.},
    title = {A modification of the arrow-hurwicz method for search of saddle points.},
    journal = {Mathematical notes of the Academy of Sciences of the USSR},
    year = {1980},
    volume = {28},
    pages= {845-848}
}

@InProceedings{pmlr-v108-mishchenko20a,
  title = 	 {Revisiting Stochastic Extragradient},
  author =       {Mishchenko, Konstantin and Kovalev, Dmitry and Shulgin, Egor and Richtarik, Peter and Malitsky, Yura},
  booktitle = 	 {AISTATS},
  year = 	 {2020},
  furl = 	 {https://proceedings.mlr.press/v108/mishchenko20a.html},
}

@inproceedings{
sokota2023a,
title={A Unified Approach to Reinforcement Learning, Quantal Response Equilibria, and Two-Player Zero-Sum Games},
author={Samuel Sokota and Ryan D'Orazio and J Zico Kolter and Nicolas Loizou and Marc Lanctot and Ioannis Mitliagkas and Noam Brown and Christian Kroer},
booktitle={ICLR},
year={2023},
furl={https://openreview.net/forum?id=DpE5UYUQzZH}
}

@inproceedings{goodfellow14,
 author = {Goodfellow, Ian J. and Pouget-Abadie, Jean and Mirza, Mehdi and Xu, Bing and Warde-Farley, David and Ozair, Sherjil and Courville, Aaron and Bengio, Yoshua},
 booktitle = {NeurIPS},
 title = {Generative Adversarial Nets},
 furl = {https://proceedings.neurips.cc/paper_files/paper/2014/file/f033ed80deb0234979a61f95710dbe25-Paper.pdf},
 year = {2014}
}

@InProceedings{diakonikolas21a_weak_mvi_eg,
  title = 	 { Efficient Methods for Structured Nonconvex-Nonconcave Min-Max Optimization },
  author =       {Diakonikolas, Jelena and Daskalakis, Constantinos and Jordan, Michael},
  booktitle = 	 {AISTATS},
  year = 	 {2021},
  furl = 	 {https://proceedings.mlr.press/v130/diakonikolas21a.html},
}

@article{nesterov_dual_2007,
	title = {Dual extrapolation and its applications to solving variational inequalities and related problems},
	volume = {109},
	issn = {1436-4646},
	furl = {https://doi.org/10.1007/s10107-006-0034-z},
	doi = {10.1007/s10107-006-0034-z},
	language = {en},
	number = {2},
	furldate = {2025-10-30},
	journal = {Mathematical Programming},
	author = {Nesterov, Yurii},
	year = {2007},
	pages = {319--344},
}

@article{monteiro_ergodic_eg_guarantees,
    author = {Monteiro, Renato D. C. and Svaiter, B. F.},
    title = {On the Complexity of the Hybrid Proximal Extragradient Method for the Iterates and the Ergodic Mean},
    journal = {SIAM Journal on Optimization},
    volume = {20},
    number = {6},
    pages = {2755-2787},
    year = {2010},
    doi = {10.1137/090753127},
    furl = {https://doi.org/10.1137/090753127},
    eprint = {https://doi.org/10.1137/090753127},
}

@article{nemirovski_ergodic_eg,
    author = {Nemirovski, Arkadi},
    title = {Prox-Method with Rate of Convergence O(1/t) for Variational Inequalities with Lipschitz Continuous Monotone Operators and Smooth Convex-Concave Saddle Point Problems},
    journal = {SIAM Journal on Optimization},
    volume = {15},
    number = {1},
    pages = {229-251},
    year = {2004},
    doi = {10.1137/S1052623403425629},
    furl = {https://doi.org/10.1137/S1052623403425629},
    eprint = {https://doi.org/10.1137/S1052623403425629}
}

@InProceedings{pmlr-v125-golowich20a,
  title = 	 {Last Iterate is Slower than Averaged Iterate in Smooth Convex-Concave Saddle Point Problems},
  author =       {Golowich, Noah and Pattathil, Sarath and Daskalakis, Constantinos and Ozdaglar, Asuman},
  booktitle = 	 {Proceedings of Thirty Third Conference on Learning Theory},
  pages = 	 {1758--1784},
  year = 	 {2020},
  editor = 	 {Abernethy, Jacob and Agarwal, Shivani},
  volume = 	 {125},
  series = 	 {Proceedings of Machine Learning Research},
  publisher =    {PMLR},
  furl = 	 {https://proceedings.mlr.press/v125/golowich20a.html}
}

@inproceedings{cai_last_iterate_eg,
 author = {Cai, Yang and Oikonomou, Argyris and Zheng, Weiqiang},
 booktitle = {NeurIPS},
 title = {Finite-Time Last-Iterate Convergence for Learning in Multi-Player Games},
 furl = {https://proceedings.neurips.cc/paper_files/paper/2022/file/db2d2001f63e83214b08948b459f69f0-Paper-Conference.pdf},
 year = {2022}
}

@article{tseng2000,
author = {Tseng, Paul},
title = {A Modified Forward-Backward Splitting Method for Maximal Monotone Mappings},
journal = {SIAM Journal on Control and Optimization},
volume = {38},
number = {2},
pages = {431-446},
year = {2000},
doi = {10.1137/S0363012998338806},
furl = {https://doi.org/10.1137/S0363012998338806},
eprint = { https://doi.org/10.1137/S0363012998338806}
}

@article{malitsky2015,
author = {Malitsky, Yu.},
title = {Projected Reflected Gradient Methods for Monotone Variational Inequalities},
journal = {SIAM Journal on Optimization},
volume = {25},
number = {1},
pages = {502-520},
year = {2015},
doi = {10.1137/14097238X},
furl = { https://doi.org/10.1137/14097238X},
eprint = { https://doi.org/10.1137/14097238X}
}

@article{malitsky2020,
author = {Malitsky, Yura and Tam, Matthew K.},
title = {A Forward-Backward Splitting Method for Monotone Inclusions Without Cocoercivity},
journal = {SIAM Journal on Optimization},
volume = {30},
number = {2},
pages = {1451-1472},
year = {2020},
doi = {10.1137/18M1207260},
furl = {  https://doi.org/10.1137/18M1207260 },
eprint = {  https://doi.org/10.1137/18M1207260 }
}

@article{yardim2025,
    author = {Yardim, Batuhan and Cayci, Semih and He, Niao},
    title = {A Variational Inequality Approach to Independent Learning in Static Mean-Field Games},
    year = {2025},
    issue_date = {June 2025},
    publisher = {Association for Computing Machinery},
    address = {New York, NY, USA},
    volume = {2},
    number = {2},
    furl = {https://doi.org/10.1145/3731553},
    doi = {10.1145/3731553},
    journal = {ACM / IMS J. Data Sci.},
    articleno = {7},
    numpages = {43}
}

@inproceedings{
    sidahmed2025addressing,
    title={Addressing Rotational Learning Dynamics in Multi-Agent Reinforcement Learning},
    author={Baraah A. M. Sidahmed and Tatjana Chavdarova},
    booktitle={Eighteenth European Workshop on Reinforcement Learning},
    year={2025},
    furl={https://openreview.net/forum?id=RtbH7KnH0n}
}

@article{
    vankov2025stochasticvip,
    title={Generalized Smooth Stochastic Variational Inequalities:  Almost Sure Convergence and Convergence Rates},
    author={Daniil Vankov and Angelia Nedich and Lalitha Sankar},
    journal={Transactions on Machine Learning Research},
    issn={2835-8856},
    year={2025},
    furl={https://openreview.net/forum?id=EjqSpbUBWU},
    note={}
}

@article{beznosikov_distributed_2025,
	title = {Distributed saddle point problems: lower bounds, near-optimal and robust algorithms},
	volume = {40},
	issn = {1055-6788},
	shorttitle = {Distributed saddle point problems},
	furl = {https://doi.org/10.1080/10556788.2025.2463986},
	doi = {10.1080/10556788.2025.2463986},
	number = {5},
	furldate = {2026-01-29},
	journal = {Optimization Methods and Software},
	publisher = {Taylor \& Francis},
	author = {Beznosikov, Aleksandr and Samokhin, Valentin and Gasnikov, Alexander},
	year = {2025},
	pages = {1249--1266},
}

@inproceedings{
    cai2023accelerated,
    title={Accelerated Single-Call Methods for Constrained Min-Max Optimization},
    author={Yang Cai and Weiqiang Zheng},
    booktitle={ICLR},
    year={2023},
    furl={https://openreview.net/forum?id=HRwN7IQLUKA}
}

@article{dafermos_traffic_1980,
	title = {Traffic {Equilibrium} and {Variational} {Inequalities}},
	volume = {14},
	issn = {0041-1655},
	furl = {https://pubsonline.informs.org/doi/10.1287/trsc.14.1.42},
	doi = {10.1287/trsc.14.1.42},
	number = {1},
	furldate = {2026-03-19},
	journal = {Transportation Science},
	publisher = {INFORMS},
	author = {Dafermos, Stella},
	year = {1980},
	pages = {42--54},
}

@article{conical_traffic,
    author = {Spiess, Heinz},
    title = {Technical Note—Conical Volume-Delay Functions},
    journal = {Transportation Science},
    volume = {24},
    number = {2},
    pages = {153-158},
    year = {1990},
    doi = {10.1287/trsc.24.2.153},
    furl = { 
            https://doi.org/10.1287/trsc.24.2.153
    },
    eprint = { 
            https://doi.org/10.1287/trsc.24.2.153
    }
}

@inproceedings{
    choudhury2025extragradient,
    title={Extragradient Method for {$(L_0, L_1)$}-Lipschitz Root-finding Problems},
    author={Sayantan Choudhury and Nicolas Loizou},
    booktitle={NeurIPS},
    year={2025},
    furl={https://openreview.net/forum?id=jRnGy4ZzK5}
}

@article{malitsky_golden_2020,
	title = {Golden ratio algorithms for variational inequalities},
	volume = {184},
	issn = {1436-4646},
	furl = {https://doi.org/10.1007/s10107-019-01416-w},
	doi = {10.1007/s10107-019-01416-w},
	language = {en},
	number = {1},
	furldate = {2026-01-20},
	journal = {Mathematical Programming},
	author = {Malitsky, Yura},
	year = {2020},
	pages = {383--410},
}

@book{facchinei_finite-dimensional_2003,
	address = {New York, NY},
	series = {Springer {Series} in {Operations} {Research} and {Financial} {Engineering}},
	title = {Finite-{Dimensional} {Variational} {Inequalities} and {Complementarity} {Problems}},
	copyright = {http://www.springer.com/tdm},
	isbn = {978-0-387-95580-3},
	furl = {http://link.springer.com/10.1007/b97543},
	doi = {10.1007/b97543},
	language = {en},
	furldate = {2026-04-05},
	publisher = {Springer},
	editor = {Facchinei, Francisco and Pang, Jong-Shi},
	year = {2003},
}

@inproceedings{popov_ergodic_convergence,
 author = {Hsieh, Yu-Guan and Iutzeler, Franck and Malick, J\'{e}r\^{o}me and Mertikopoulos, Panayotis},
 booktitle = {NeurIPS},
 title = {On the convergence of single-call stochastic extra-gradient methods},
 furl = {https://proceedings.neurips.cc/paper_files/paper/2019/file/4625d8e31dad7d1c4c83399a6eb62f0c-Paper.pdf},
 year = {2019}
}

@article{gan_tseng,
author = {B\"{o}hm, Axel and Sedlmayer, Michael and Csetnek, Ern\"{o} Robert and Bo\c{t}, Radu Ioan},
title = {Two Steps at a Time---Taking GAN Training in Stride with Tseng's Method},
journal = {SIAM Journal on Mathematics of Data Science},
volume = {4},
number = {2},
pages = {750-771},
year = {2022},
doi = {10.1137/21M1420939},
furl = {  https://doi.org/10.1137/21M1420939 },
eprint = {  https://doi.org/10.1137/21M1420939 },
}

@InProceedings{pmlr-v139-yoon21d,
  title = 	 {Accelerated Algorithms for Smooth Convex-Concave Minimax Problems with $O(1/k^2)$ Rate on Squared Gradient Norm},
  author =       {Yoon, Taeho and Ryu, Ernest K},
  booktitle = 	 {ICML},
  year = 	 {2021},
  furl = 	 {https://proceedings.mlr.press/v139/yoon21d.html},
}

@inproceedings{popov_last_iterate,
 author = {Gorbunov, Eduard and Taylor, Adrien and Gidel, Gauthier},
 booktitle = {NeurIPS},
 title = {Last-Iterate Convergence of Optimistic Gradient Method for Monotone Variational Inequalities},
 furl = {https://proceedings.neurips.cc/paper_files/paper/2022/file/893cd874ba98afa54ae9e385a24a83ac-Paper-Conference.pdf},
 year = {2022}
}

@article
    {traffic_problem_source, 
    author={Van Hieu, Dang and Dung Muu, Le and Kim Quy, Pham},
    title = {Strong convergence of multi-parameter projection methods for variational inequality problems}, 
    volume={27}, 
    furl={https://journals.vilniustech.lt/index.php/MMA/article/view/14479}, 
    DOI={10.3846/mma.2022.14479}, 
    abstractNote={
        In this paper, we introduce a multi-parameter projection method for solving a variational inequality problem, and establish its strong convergence in a Hilbert space under appropriate conditions. The method involves two projectionsteps with different variable stepsizes where one of them is computed explicitly on a specifically structural half-space. The proof of strong convergence of the method is based on the regularization solutions depending on parameters of the original problem. It turns out that the solution obtained by the method is the solution of a bilevel variational inequality problem whose constraint is the solution set of our considered problem. In order to support the obtained theoretical results, we perform some experiments on transportation equilibrium and optimal control problems, and also involve comparisons. Numerical results show the computational effectiveness and the fast convergence of the new method over some existing ones.
    }, number={2}, 
    journal={Mathematical Modelling and Analysis}, 
    year={2022}, 
    pages={242–262} 
}

@book{bureau1964traffic,
  author    = {{Bureau of Public Roads}},
  title     = {Traffic Assignment Manual},
  year      = {1964},
  publisher = {U.S. Department of Commerce, Urban Planning Division},
  address   = {Washington, DC}
}

@article{trandinh2023sublinearconvergenceratesextragradienttype,
    author = {Quoc Tran-Dinh},
    title = {Sublinear Convergence Rates of Extragradient-Type Methods: A Survey on Classical and Recent Developments},
    journal = {arXiv preprint arXiv:2303.17192},
    year = {2023}
}

@article{beyond_golden_alacaoglu,
  author  = {Alacaoglu, Ahmet and B{\"o}hm, Axel and Malitsky, Yura},
  title   = {Beyond the Golden Ratio for Variational Inequality Algorithms},
  journal = {Journal of Machine Learning Research},
  year    = {2023},
  volume  = {24},
  number  = {172},
  pages   = {1--33},
  furl     = {http://jmlr.org/papers/v24/22-1488.html}
}

@inproceedings{
    daskalakis2018training,
    title={Training {GAN}s with Optimism},
    author={Constantinos Daskalakis and Andrew Ilyas and Vasilis Syrgkanis and Haoyang Zeng},
    booktitle={ICLR},
    year={2018},
    furl={https://openreview.net/forum?id=SJJySbbAZ},
}

@InProceedings{pmlr-v151-gorbunov22b,
  title = 	 { Stochastic Extragradient: General Analysis and Improved Rates },
  author =       {Gorbunov, Eduard and Berard, Hugo and Gidel, Gauthier and Loizou, Nicolas},
  booktitle = 	 {AISTATS},
  year = 	 {2022},
  furl = 	 {https://proceedings.mlr.press/v151/gorbunov22b.html}
}

@inproceedings{beznosikov_decentralized_seg22,
 author = {Beznosikov, Aleksandr and Dvurechenskii, Pavel and Koloskova, Anastasiia and Samokhin, Valentin and Stich, Sebastian U and Gasnikov, Alexander},
 booktitle = {NeurIPS},
 title = {Decentralized Local Stochastic Extra-Gradient for Variational Inequalities},
 furl = {https://proceedings.neurips.cc/paper_files/paper/2022/file/f9379afacdbabfdc6b060972b60f9ab8-Paper-Conference.pdf},
 year = {2022}
}

@inproceedings{sgda_consensus21,
 author = {Loizou, Nicolas and Berard, Hugo and Gidel, Gauthier and Mitliagkas, Ioannis and Lacoste-Julien, Simon},
 booktitle = {NeurIPS},
 title = {Stochastic Gradient Descent-Ascent and Consensus Optimization for Smooth Games: Convergence Analysis under Expected Co-coercivity},
 furl = {https://proceedings.neurips.cc/paper_files/paper/2021/file/9f96f36b7aae3b1ff847c26ac94c604e-Paper.pdf},
 year = {2021}
}

@InProceedings{sgda_unified_beznosikov23a,
  title = 	 {Stochastic Gradient Descent-Ascent: Unified Theory and New Efficient Methods},
  author =       {Beznosikov, Aleksandr and Gorbunov, Eduard and Berard, Hugo and Loizou, Nicolas},
  booktitle = 	 {AISTATS},
  year = 	 {2023},
  furl = 	 {https://proceedings.mlr.press/v206/beznosikov23a.html},
}

@inproceedings{
    borodich2026nesterov,
    title={Nesterov Finds {GRAAL}: Optimal and Adaptive Gradient Method for Convex Optimization},
    author={Ekaterina Borodich and Dmitry Kovalev},
    booktitle={ICLR},
    year={2026},
    furl={https://openreview.net/forum?id=vPSiCA3CkD}
}

@inproceedings{
vankov2025optimizing,
title={Optimizing {$(L_0, L_1)$}-Smooth Functions by Gradient Methods},
author={Daniil Vankov and Anton Rodomanov and Angelia Nedich and Lalitha Sankar and Sebastian U Stich},
booktitle={ICLR},
year={2025},
furl={https://openreview.net/forum?id=GQ1Tc3vHbt}
}

@inproceedings{
    riabinin2026from,
    title={From Muon to Gluon: Bridging Theory and Practice of {LMO}-based Optimizers for {LLM}s},
    author={Artem Riabinin and Egor Shulgin and Kaja Gruntkowska and Peter Richt{\'a}rik},
    booktitle={ICML},
    year={2026},
    furl={https://openreview.net/forum?id=IelAHU5MVz}
}

@inproceedings{
    tyurin2025toward,
    title={Toward a Unified Theory of Gradient Descent under Generalized Smoothness},
    author={Alexander Tyurin},
    booktitle={ICML},
    year={2025},
    furl={https://openreview.net/forum?id=mzle2Jnt72}
}

@InProceedings{pmlr-v235-xian24a,
  title = 	 {Delving into the Convergence of Generalized Smooth Minimax Optimization},
  author =       {Xian, Wenhan and Chen, Ziyi and Huang, Heng},
  booktitle = 	 {ICML},
  year = 	 {2024},
  editor = 	 {Salakhutdinov, Ruslan and Kolter, Zico and Heller, Katherine and Weller, Adrian and Oliver, Nuria and Scarlett, Jonathan and Berkenkamp, Felix},
  furl = 	 {https://proceedings.mlr.press/v235/xian24a.html},
}

@inproceedings{
    yaz2018averaging,
    title={The Unusual Effectiveness of Averaging in {GAN} Training},
    author={Yasin Yaz{\i}c{\i} and Chuan-Sheng Foo and Stefan Winkler and Kim-Hui Yap and Georgios Piliouras and Vijay Chandrasekhar},
    booktitle={ICLR},
    year={2019},
    furl={https://openreview.net/forum?id=SJgw_sRqFQ},
}

@article{vyguzov2026frankwolfealgorithmsl0l1smooth,
    author = {A. A. Vyguzov and F. S. Stonyakin},
    title = {Frank-Wolfe Algorithms for (L0, L1)-smooth functions},
    journal = {arXiv preprint arxiv:2510.16468},
    year = {2026} 
}

@article{yoon2026polyaktypeextragradientmethodsmonotone,
  title={Polyak-Type Extragradient Methods for Monotone Root-Finding Problems},
  author={Yoon, TaeHo and Choudhury, Sayantan and Greenberg, Ezra and Loizou, Nicolas},
  journal={arXiv preprint arXiv:2609.26581},
  year={2026}
}

\newpage

\appendix
\part*{Supplementary Material}

\section{Further Related Work}\label{app:related work}
\textbf{Algorithms for Solving VIs.} The simplest method for solving variational inequalities is the projection method \citep{sgda_consensus21,sgda_unified_beznosikov23a}, though as noted in the main text, this diverges for simple monotone problems \citep{gidel2018a}. Instead, \eg \citep{Korpelevich76} has proven a popular solution to solving monotone VIs, and has seen extensions in stochastic \citep{pmlr-v108-mishchenko20a,pmlr-v151-gorbunov22b}, non-monotone \citep{diakonikolas21a_weak_mvi_eg}, and distributed settings \citep{beznosikov_decentralized_seg22}. \peg, initially proposed by Popov \citep{popov1980}, was the first method to reduce the per-iteration computational cost to only one operator call per iteration. Like \eg, it has seen various stochastic extensions \citep{daskalakis2018training,popov_last_iterate,choudhury2023singlecall}. Beyond \eg and \peg, other methods have been introduced that instead reduce the projections per iteration \citep{tseng2000} or both the operator and projection calls per iteration \citep{malitsky2015,malitsky2020,malitsky_golden_2020,beyond_golden_alacaoglu}. Accelerated methods have also been developed with faster convergence rates for monotone \citep{pmlr-v139-yoon21d} and non-monotone \citep{cai2023accelerated} settings. \citep{trandinh2023sublinearconvergenceratesextragradienttype} provides a survey of many of the relevant methods.

\textbf{Relaxed Lipschitz Conditions.} The $(L_0,L_1)$-smoothness condition was first introduced by \cite{Zhang2020Why} and was further developed to the more general $\alpha$-symmetric setting in \cite{pmlr-v202-chen23ar}. Various convergence guarantees have been developed for gradient descent methods in this setting \citep{vankov2025optimizing}. New generalizations of this condition have been proposed \citep{tyurin2025toward}, with a layer-wise variant providing a theoretical basis for the success of Gluon for example \citep{riabinin2026from}. 

\cite{pmlr-v235-xian24a} extended these ideas to min-max optimization while \citep{pmlr-v235-vankov24a} first introduced the $(L_0,L_1)$-Lipschitz assumption for the VI setting, where \cite{pmlr-v235-vankov24a} analyzed \eg and \peg under this setting for $p$-quasi sharp operators. \cite{choudhury2025extragradient} extended the convergence guarantees for \eg in the unconstrained setting to monotone and weak Minty problems while \cite{vankov2025stochasticvip} gave the first results for \eg in the stochastic $(L_0,L_1)$-Lipschitz setting. Most recently, \cite{yoon2026polyaktypeextragradientmethodsmonotone} demonstrated the use of Polyak-type stepsizes for \eg when solving $(L_0,L_1)$-Lipschitz root-finding problems.

Prior to the introduction of the $(L_0,L_1)$-Lipschitz assumption, \cite{malitsky_golden_2020,beyond_golden_alacaoglu} created and analyzed the ``Adaptive Golden Ratio Algorithm'' (\algname{aGRAAL}), which only requires local Lipschitzness for solving VIs. An accelerated version of GRAAL for the minimization setting with convergence guarantees for $(L_0,L_1)$-smooth functions was introduced more recently \citep{borodich2026nesterov}. Additionally, several works have analyzed the class of star-cocoercive operators \citep{sgda_consensus21,pmlr-v151-gorbunov22a}, which includes some non-Lipschitz operators.

\newpage

\section{Demonstration of $(L_0,L_1)$-Lipschitz Assumption Along $\algname{EG}$ and $\algname{PEG}$ Trajectories}
\label{app: l0l1 trajectory}
As discussed in the main text, the spectral norm of the Jacobian $\|\J\|$ has been observed to scale with the operator norm $\|F\|$ for \eg in the unconstrained setting in \cite{choudhury2025extragradient}. Here, we provide the first such plots for \eg and \peg in the constrained setting. 

Consider the quartic min-max problem
\begin{equation*}
    \min_{w_1\in U_1} \max_{w_2\in U_2} \mathcal{L}(w_1, w_2) = \frac{1}{4}w_1^4 + w_1w_2 - \frac{1}{4}w_2^4,
\end{equation*}

where $U_1=U_2=[0,2]$. As seen in Figure~\ref{fig:jacobian_vs_gradient_plot}, the Jacobian norm decreases with the operator norm sublinearly at the beginning of each algorithm's trajectory, and then continues decreasing in a more linear relationship after the projections are applied due to the constraints. Such behavior provides motivation for the $\alpha$-symmetric $(L_0,L_1)$-Lipschitz assumption. 

\begin{figure}[H]
    \centering
 \includegraphics[width=0.8\textwidth]{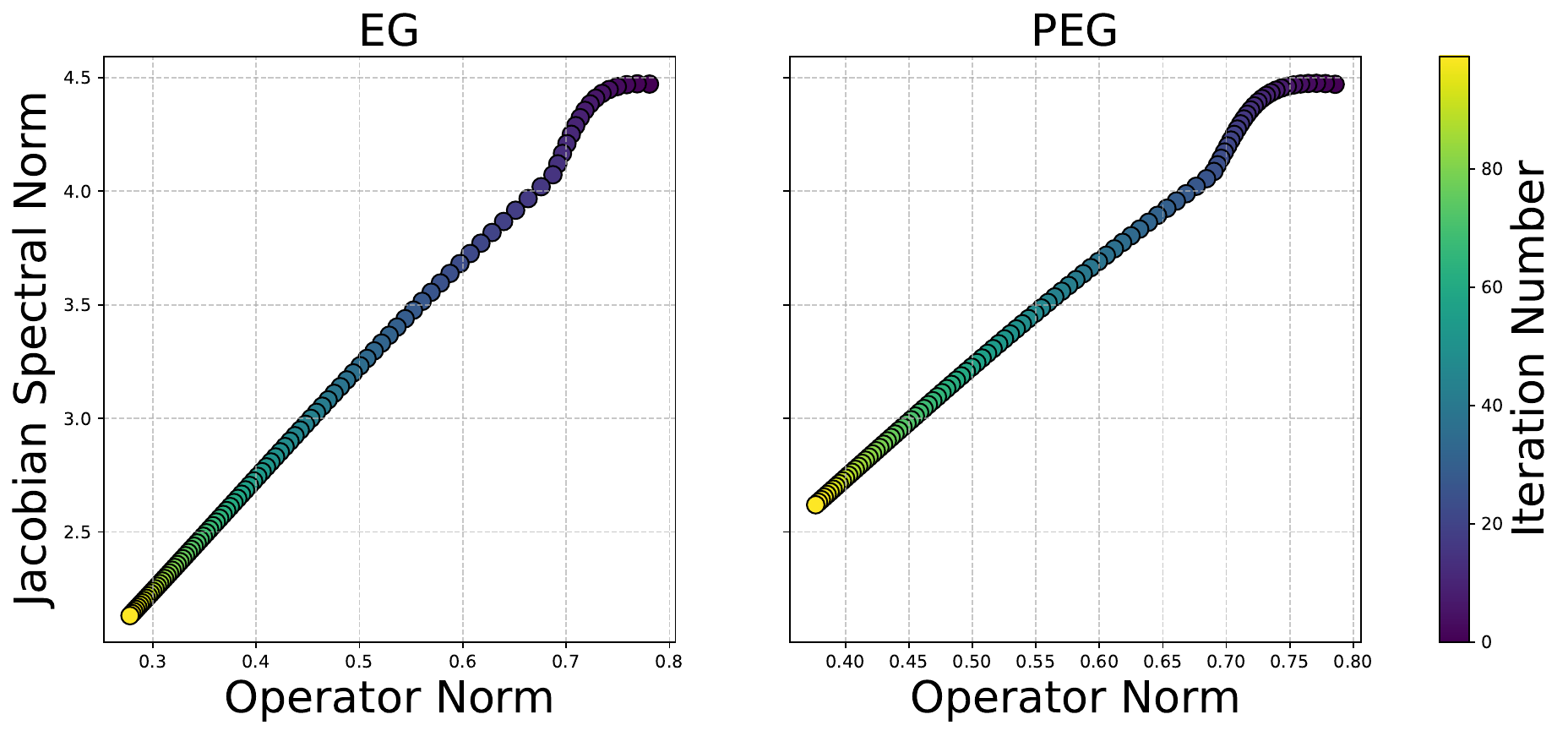}
    \caption{\small{Scatter plot of $\| \J(x_k)\|$ against $\|F(x_k)\|$.}}
    \label{fig:jacobian_vs_gradient_plot}
    \vspace{-2mm}
\end{figure}

\newpage

\section{Technical Lemmas}\label{app:technical lemmas}
In this section, we present various technical lemmas which we will use to prove the main results of our work in later sections.

First, some standard inequalities:
\begin{lemma}[Young's Inequality]
    For $a,b\in\R^d$ and $\beta> 0$ we have
    \begin{equation}
        \|a+b\|^2 \leq(1+\beta^{-1})\|a\|^2+(1+\beta)\|b\|^2 \label{full young}
    \end{equation}
    When $\beta=1$, we have
    \begin{equation}
        \|a+b\|^2\leq 2\|a\|^2+2\|b\|^2 \label{young ineq}
    \end{equation}
    We can also rearrange \eqref{full young} to obtain
    \begin{equation}
        -\|a\|^2\leq-\frac{1}{1+\beta}\|a+b\|^2+\frac{1}{\beta}\|b\|^2
        \label{Young-like}
    \end{equation}
\end{lemma}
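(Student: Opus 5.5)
Expanding the square reduces the three inequalities to one algebraic fact, which I would verify directly. For any $\beta>0$,
\[
\|a+b\|^2=\|a\|^2+2\inprd{a}{b}+\|b\|^2 .
\]
The only nontrivial step is bounding the cross term. Apply the weighted Cauchy--Schwarz/AM--GM bound
\[
2\inprd{a}{b}\le 2\|a\|\|b\| = 2\bigl(\beta^{-1/2}\|a\|\bigr)\bigl(\beta^{1/2}\|b\|\bigr)\le \beta^{-1}\|a\|^2+\beta\|b\|^2 .
\]
This follows from expanding $0\le\|\beta^{-1/2}a-\beta^{1/2}b\|^2$. Substituting it into the expansion gives \eqref{full young} immediately.

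Setting $\beta=1$ in \eqref{full young} yields \eqref{young ineq}.

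For \eqref{Young-like}, I would not apply \eqref{full young} to the pair $(a,b)$ itself. Instead, apply it to the vectors $a+b$ and $-b$, whose sum is $a$. With the roles of the weights arranged so that $\|a+b\|^2$ receives the coefficient $1+\beta$ and $\|b\|^2$ receives $1+\beta^{-1}$, this gives
\[
\|a+b\|^2\le(1+\beta)\|a\|^2+(1+\beta^{-1})\|b\|^2 .
\]
This is just \eqref{full young} with $a$ and $b$ interchanged and $b$ replaced by $-b$, after rewriting $a+b$ as the sum.

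Dividing by $1+\beta$ and using $(1+\beta^{-1})/(1+\beta)=\beta^{-1}$ gives
\[
\frac{1}{1+\beta}\|a+b\|^2\le \|a\|^2+\frac{1}{\beta}\|b\|^2 .
\]
Rearranging yields exactly $-\|a\|^2\le -\frac{1}{1+\beta}\|a+b\|^2+\frac1\beta\|b\|^2$.

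There is no real obstacle; the only point requiring care is choosing the correct pairing of vectors and weights in the last step so that the constant $1/\beta$ comes out exactly.
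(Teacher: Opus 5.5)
Your proof of \eqref{full young} is correct: you expand the square and bound the cross term by $\beta^{-1}\|a\|^2+\beta\|b\|^2$. The specialization $\beta=1$ and the final division by $1+\beta$ are also correct. This is the standard route the paper has in mind; the paper gives no proof and only remarks that \eqref{Young-like} comes from rearranging \eqref{full young}.

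The one thing to fix is your justification of the intermediate inequality $\|a+b\|^2\le(1+\beta)\|a\|^2+(1+\beta^{-1})\|b\|^2$. Applying \eqref{full young} to the vectors $a+b$ and $-b$ gives $\|a\|^2\le(1+\beta^{-1})\|a+b\|^2+(1+\beta)\|b\|^2$, or its weight-swapped version. That is an upper bound on $\|a\|^2$, which points in the wrong direction for \eqref{Young-like}. No sign change of $b$ is needed anywhere.

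The inequality you actually display is just \eqref{full young} for the same pair $(a,b)$ with $\beta$ replaced by $\beta^{-1}$, or equivalently with $a$ and $b$ interchanged. So your remark that you would ``not apply \eqref{full young} to the pair $(a,b)$ itself'' is backwards. Replace it with that one-line justification and the argument is complete.
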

\begin{lemma}
    For $a,b\in\R$, we have
    \begin{equation}
        2ab\leq a^2+b^2 \label{trivial ineq}
    \end{equation}
\end{lemma}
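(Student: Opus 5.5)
The inequality $2ab\le a^2+b^2$ follows from the nonnegativity of a square. I would expand $(a-b)^2$ using only the ring axioms of $\R$ and the fact that squares of real numbers are nonnegative.

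First, for any $a,b\in\R$,
\begin{equation*}
0\le (a-b)^2 = a^2-2ab+b^2 .
\end{equation*}
Second, I would add $2ab$ to both sides. This gives $2ab\le a^2+b^2$, which is \eqref{trivial ineq}.

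There is no real obstacle; the only point needing any care is expanding $(a-b)^2$ correctly. I would also note the equality case: equality holds exactly when $a=b$. This is useful later when the inequality is applied with scaled arguments such as $a=\sqrt{\beta}\,x$ and $b=y/\sqrt{\beta}$.

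The vector Young inequality \eqref{full young} follows from the same idea. Expand $\|a+b\|^2=\|a\|^2+2\inprd{a}{b}+\|b\|^2$. Then bound $2\inprd{a}{b}\le 2\|a\|\|b\|$ by Cauchy--Schwarz. Finally, apply \eqref{trivial ineq} to the scalars $\|a\|/\sqrt{\beta}$ and $\sqrt{\beta}\|b\|$, which gives $2\|a\|\|b\|\le \beta^{-1}\|a\|^2+\beta\|b\|^2$.
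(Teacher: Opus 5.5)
Your proof is correct and is the standard argument: expand $0\le (a-b)^2$ and rearrange. The paper gives no proof of this inequality and simply lists it as a standard fact; your derivation of the vector Young inequality from it via Cauchy--Schwarz is also correct, though this statement does not need it.
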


\begin{lemma}[Cauchy-Schwarz Inequality]
    For $a,b\in\R^d$, we have
    \begin{equation}
        \inprd{a}{b}\leq \|a\|\|b\|\label{cauchy-schwarz}
    \end{equation}
\end{lemma}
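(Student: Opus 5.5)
The plan is to treat \eqref{cauchy-schwarz} as the standard consequence of the nonnegativity of a squared norm, using only the bilinearity and symmetry of the Euclidean inner product on $\R^d$. First I dispose of the trivial case: if $b=0$, then $\inprd{a}{b}=0=\|a\|\|b\|$ and there is nothing to prove. Otherwise $\|b\|>0$, and I consider the real-valued quadratic
\begin{equation*}
    \phi(t):=\|a-tb\|^2=\|a\|^2-2t\inprd{a}{b}+t^2\|b\|^2\geq 0,\qquad t\in\R .
\end{equation*}

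The key step is choosing $t$ so that $\phi(t)\geq 0$ gives the desired inequality. The natural choice is the minimizer $t^*=\inprd{a}{b}/\|b\|^2$. Substituting yields $0\leq \|a\|^2-\inprd{a}{b}^2/\|b\|^2$, that is, $\inprd{a}{b}^2\leq \|a\|^2\|b\|^2$. Taking square roots gives $|\inprd{a}{b}|\leq\|a\|\|b\|$, which is stronger than \eqref{cauchy-schwarz}.

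An alternative route avoids division. If $a=0$ the claim is again trivial, so assume both vectors are nonzero and normalize, writing $u=a/\|a\|$ and $v=b/\|b\|$. By the rearranged form of \eqref{young ineq}, or directly from $\|u-v\|^2\geq 0$, we get $2\inprd{u}{v}\leq\|u\|^2+\|v\|^2=2$, and hence $\inprd{u}{v}\leq 1$. Multiplying by $\|a\|\|b\|>0$ gives the result. This version mirrors the scalar inequality \eqref{trivial ineq} and fits the style of the preceding lemmas.

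There is no real obstacle here. The only point needing care is the degenerate case of a zero vector, where the normalization or the choice of $t^*$ is undefined; it is handled separately at the start.
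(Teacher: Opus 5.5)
Your argument is correct. Both the quadratic-minimization route and the normalized-vector route are standard textbook proofs, and you handle the zero-vector cases properly. The paper gives no proof of this lemma, listing it among ``standard inequalities'', so there is nothing to compare against. One small caution: citing Young's inequality for the second route could be circular if Young's inequality is itself derived via Cauchy--Schwarz, so keep your direct justification from $\|u-v\|^2\geq 0$.
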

\begin{lemma}[Projection properties]
    Given a nonempty convex closed set $U\subset \R^d$, the projection operator $P_U$ satisfies 
     \begin{align}
        \|P_U(x)-P_U(y)\| \leq \|x-y\| \hspace{0.5cm}&\forall x,y\in\R^d        \label{Nonexpansive} \\
        \inprd{x-P_U(x)}{u-P_U(x)}\leq 0 \hspace{0.5cm}&\forall u\in U, x\in\R^d \label{proj inprd} \\
        \|u-P_U(x)\|^2\leq \|u-x\|^2-\|x-P_U(x)\|^2 \hspace{0.5cm}&\forall u\in U, x\in\R^d \label{proj norm diff}
    \end{align}
\end{lemma}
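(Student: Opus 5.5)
The plan is to prove the variational characterization \eqref{proj inprd} first and then derive \eqref{proj norm diff} and \eqref{Nonexpansive} from it by elementary algebra.

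Throughout, $P_U(x)$ denotes the unique minimizer of $u\mapsto\|u-x\|^2$ over $U$. \emph{Existence} holds because $U$ is nonempty and closed and the function is continuous and coercive, so a minimizing sequence is bounded and has a limit point in $U$. \emph{Uniqueness} follows from strict convexity of $\|\cdot-x\|^2$ together with convexity of $U$: if two distinct minimizers existed, their midpoint would lie in $U$ and have strictly smaller value.

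\textbf{Step 1: \eqref{proj inprd}.} Write $p=P_U(x)$, fix $u\in U$, and for $t\in(0,1]$ set $z_t=p+t(u-p)$. By convexity $z_t\in U$, so minimality of $p$ gives $\|x-z_t\|^2\ge\|x-p\|^2$. Expanding,
\[
\|x-z_t\|^2=\|x-p\|^2-2t\inprd{x-p}{u-p}+t^2\|u-p\|^2 .
\]
Hence $2t\inprd{x-p}{u-p}\le t^2\|u-p\|^2$. Dividing by $t>0$ and letting $t\to0^+$ yields $\inprd{x-p}{u-p}\le0$. This limiting step is the only point needing any care, and it is routine.

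\textbf{Step 2: \eqref{proj norm diff}.} Expand
\[
\|u-x\|^2=\|u-p\|^2+\|p-x\|^2+2\inprd{u-p}{p-x}.
\]
By Step 1 the cross term equals $-2\inprd{x-p}{u-p}\ge0$. Dropping it and rearranging gives $\|u-p\|^2\le\|u-x\|^2-\|x-p\|^2$.

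\textbf{Step 3: \eqref{Nonexpansive}.} Let $p=P_U(x)$ and $q=P_U(y)$. Apply Step 1 twice:
\begin{itemize}
\item at $x$ with $u=q$, giving $\inprd{x-p}{q-p}\le0$;
\item at $y$ with $u=p$, giving $\inprd{y-q}{p-q}\le0$.
\end{itemize}
Adding the two inequalities gives $\inprd{(x-y)-(p-q)}{q-p}\le0$, that is, $\|p-q\|^2\le\inprd{x-y}{p-q}$. The Cauchy--Schwarz inequality \eqref{cauchy-schwarz} then gives $\|p-q\|^2\le\|x-y\|\,\|p-q\|$. If $p=q$ the claim is trivial; otherwise dividing by $\|p-q\|$ gives $\|p-q\|\le\|x-y\|$.

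None of these steps is a real obstacle. The only mildly delicate points are the $t\to0^+$ limit in Step 1 and the well-definedness of $P_U$ noted at the start.
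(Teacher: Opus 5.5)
Your proof is correct and complete. Step 1 is the standard first-order optimality argument along the segment $p+t(u-p)\in U$. Step 2 is the exact expansion of $\|u-x\|^2$, with the cross term signed by Step 1. Step 3 adds the two variational inequalities correctly, and you handled the case $p=q$ before dividing.

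The paper gives no proof of this lemma. It lists these projection facts as standard background, alongside Young's and Cauchy--Schwarz. Your derivation supplies the usual textbook justification, and it also settles the well-definedness of $P_U$, which the paper uses implicitly.

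Step 3 actually proves more than needed: it gives firm nonexpansiveness, $\|P_U(x)-P_U(y)\|^2\le\inprd{x-y}{P_U(x)-P_U(y)}$, which is stronger than \eqref{Nonexpansive}.
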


We now prove two basic inequalities relating $(L_0, L_1)$-Lipschitz constants and the strong monotonicity parameter $\mu$:
\begin{lemma}
    If $F:U\rightarrow \R^d$ is a $\mu$-strongly monotone  $\alpha$-symmetric $(L_0,L_1)$-Lipschitz operator, then for any $x\in U$
    \begin{equation}
        \mu\leq \begin{cases}
            L_0+L_1\|F(x)\| &\ifaone \\
             K_0+K_1\|F(x)\|^\alpha &\ifasmall
        \end{cases}  \label{mu L0 L1}   
    \end{equation}
\end{lemma}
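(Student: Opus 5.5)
Fix an arbitrary $x\in U$ and pair it with a nearby point. Since $F$ is $\mu$-strongly monotone, for every $y\in U$ with $y\neq x$, combining \eqref{muSC} with Cauchy--Schwarz \eqref{cauchy-schwarz} gives
\[
\mu\|x-y\|^2\le \inprd{F(x)-F(y)}{x-y}\le \|F(x)-F(y)\|\,\|x-y\|,
\]
so $\mu\le \|F(x)-F(y)\|/\|x-y\|$. It therefore suffices to bound this difference quotient using Proposition~\ref{lem: l0l1 smooth}, with the roles of the points arranged so that the operator norm is evaluated at the fixed point $x$. Concretely, apply \eqref{a1 l0l1} and \eqref{a small l0l1} with the pair $(y,x)$, so that $\|F(x)\|$ appears on the right:
\[
\mu\le (L_0+L_1\|F(x)\|)e^{L_1\|x-y\|}\quad(\alpha=1),\qquad
\mu\le K_0+K_1\|F(x)\|^\alpha+K_2\|x-y\|^{p_\alpha}\quad(\alpha\in(0,1)).
\]

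Next, let $y\to x$ within $U$. In both bounds the right-hand side is continuous in $\|x-y\|$. As $\|x-y\|\to 0$ we have $e^{L_1\|x-y\|}\to1$, and since $p_\alpha>0$ also $K_2\|x-y\|^{p_\alpha}\to0$. Taking the limit yields exactly \eqref{mu L0 L1}.

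The only real obstacle is guaranteeing that such points $y\neq x$ with $\|x-y\|$ arbitrarily small exist in $U$. If $U$ contains at least two points, convexity supplies them: pick any $z\in U\setminus\{x\}$ and set $y_t=x+t(z-x)\in U$ with $t\downarrow0$. If $U$ is a singleton, the strong monotonicity condition is vacuous and $\mu$ is unconstrained, so the inequality is to be read under the standing nondegeneracy assumption that $U$ is not a single point. I will note this caveat explicitly rather than treat it as part of the argument.
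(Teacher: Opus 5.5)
Your proposal is correct and follows the paper's proof: strong monotonicity plus Cauchy--Schwarz, then the bounds of Proposition~\ref{lem: l0l1 smooth}, then letting $y\to x$. The differences are minor refinements. You apply the proposition to the pair $(y,x)$ so that $\|F(x)\|$ appears directly, whereas the paper keeps $\|F(y)\|$ and implicitly uses continuity of $F$ in the limit. You also justify via convexity that points $y\neq x$ arbitrarily close to $x$ exist in $U$, a step the paper leaves implicit.
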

\begin{proof}
    From strong monotonicity and the Cauchy-Schwarz Inequality, we have
    \begin{align*}
    \mu\|x-y\|^2 &\leqfrom{\ref{muSC}} \inprd{F(x)-F(y)}{x-y} \\
    &\leqfrom{\ref{cauchy-schwarz}}\|F(x)-F(y)\|\|x-y\|  \\
    \end{align*}
    From here, we split into two cases depending on $\alpha$. First, if $\alpha =1$, we have
    \begin{align*}
   \mu\|x-y\|^2 &\leqfrom{\ref{a1 l0l1}} (L_0+L_1\|F(y)\|)\exp(L_1\|x-y\|)\|x-y\|^2 \\
    \Rightarrow \mu&\leq (L_0+L_1\|F(y)\|)\exp(L_1\|x-y\|)
\end{align*}
while if $\alpha \in(0,1)$, we have
\begin{align*}
    \mu\|x-y\|^2  &\leq(K_0+K_1\|F(y)\|^\alpha + K_2\|x-y\|^{\alpha/(1-\alpha)})\|x-y\|^2 \\
    \Rightarrow \mu&\leq (K_0+K_1\|F(y)\|^\alpha + K_2\|x-y\|^{\alpha/(1-\alpha)})
\end{align*}
In either case, the inequality holds for all $x,y$, so by taking $y\rightarrow x$ we attain the desired results.
\end{proof}

The following lemma will be useful in our analysis of monotone problems:
\begin{lemma}
    Let $z_k,\gamma_k$ be sequences with $\gamma_k\geq \ugamma$ for all $k$.  Define the average iterate $\bar{z}_K=\frac{\sum_{k=0}^{K-1}\gamma_k z_k}{\sum_{k=0}^{K-1}\gamma_k}$. Then if $\bar{z}_K\in B$,   
    \begin{equation}
        \supg \sum_{k=0}^{K-1} \gamma_k \inprd{F(y)}{z_k - y} \geq K\ugamma\gap{\bar{z}_K}\label{Gap lemma}
    \end{equation}
\end{lemma}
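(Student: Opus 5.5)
The plan is to use linearity of the inner product in its second argument, which collapses the weighted sum into a single inner product evaluated at the averaged point $\bar{z}_K$. Write $S_K=\sum_{k=0}^{K-1}\gamma_k>0$. For any fixed $y\in B$, the definition of $\bar{z}_K$ gives
\begin{equation*}
\sum_{k=0}^{K-1}\gamma_k\inprd{F(y)}{z_k-y}=\inprd{F(y)}{\sum_{k=0}^{K-1}\gamma_k z_k - S_K y}=S_K\inprd{F(y)}{\bar{z}_K-y}.
\end{equation*}
Taking the supremum over $y\in B$ on both sides, and using that $S_K$ does not depend on $y$, yields
\begin{equation*}
\supg\sum_{k=0}^{K-1}\gamma_k\inprd{F(y)}{z_k-y}=S_K\,\supg\inprd{F(y)}{\bar{z}_K-y}=S_K\,\gap{\bar{z}_K},
\end{equation*}
with the maximum in the definition of the restricted gap read as a supremum if it is not attained.

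The second step is to compare $S_K$ with $K\ugamma$. Since $\gamma_k\ge\ugamma$ for all $k$, we have $S_K\ge K\ugamma$. To pass from $S_K\gap{\bar{z}_K}$ to $K\ugamma\gap{\bar{z}_K}$, I need $\gap{\bar{z}_K}\ge 0$, and this is exactly where the hypothesis $\bar{z}_K\in B$ enters. Choosing $y=\bar{z}_K\in B$ in the supremum gives $\gap{\bar{z}_K}\ge\inprd{F(\bar{z}_K)}{0}=0$. Then $S_K\gap{\bar{z}_K}\ge K\ugamma\gap{\bar{z}_K}$, which proves \eqref{Gap lemma}.

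The only delicate point is this nonnegativity of the gap. Without it, the inequality $S_K\ge K\ugamma$ would reverse direction after multiplying by a negative quantity. The membership $\bar{z}_K\in B$ is what guarantees the gap is nonnegative, and the later theorems use this lemma precisely because they separately establish $\bar{x}_K\in B$. I would also note that $B$ is nonempty, since it contains $\bar{z}_K$, so the supremum is well defined, possibly equal to $+\infty$, in which case the inequality holds trivially.
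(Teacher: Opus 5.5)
Your proof is correct and follows the paper's argument: collapse the weighted sum by linearity to $\sum_{k=0}^{K-1}\gamma_k$ times $\gap{\bar{z}_K}$, then use $\sum_{k=0}^{K-1}\gamma_k\ge K\ugamma$ together with the nonnegativity of the gap, which comes from $\bar{z}_K\in B$. Testing the supremum at $y=\bar{z}_K$ makes that nonnegativity explicit, which the paper only asserts.
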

\begin{proof}
    Let $\Gamma_K=\sum_{k=0}^{K-1} \gamma_k \geq K\ugamma$. We have
\begin{align*}
    \supg \sum_{k=0}^{K-1} \gamma_k \inprd{F(y)}{z_k - y} &=  \Gamma_K\supg \sum_{k=0}^{K-1} \inprd{F(y)}{\frac{z_k\gamma_k}{\Gamma_K} - \frac{\gamma_ky}{\Gamma_K}} \\
    &= \Gamma_K\supg  \inprd{F(y)}{\sum_{k=0}^{K-1}\p{\frac{z_k\gamma_k}{\Gamma_K} - \frac{\gamma_ky}{\Gamma_K}}}\\
    &= \Gamma_K\supg  \inprd{F(y)}{\sum_{k=0}^{K-1}\p{\frac{z_k\gamma_k}{\Gamma_K}} -y} \\
    &=\Gamma_K\gap{\bar{z}_K} \\
    &\geq K\ugamma \gap{\bar{z}_K}
\end{align*}
where for the last step $\bar{z}_K\in B$ ensures that $\gap{\bar{z}_K}$ is nonnegative.
\end{proof}

Lastly, the following descent lemma will allow us to obtain our descent inequalities for \eg and \peg in a more unified fashion. Our proof closely follows \cite[Lemma. ~5.1]{pmlr-v235-vankov24a}, though we note such results are standard, and a similar derivation can be found in \cite[Lemma.~12.1.10]{facchinei_finite-dimensional_2003}. 

\begin{lemma}\label{lem:unified predescent}
   For all $y\in U$, the iterates of \eg and \peg algorithms satisfy 
    \begin{equation}
    \begin{aligned}
        \|x_{k+1}-y\|^2 &\leq \|x_k-y\|^2 - \|x_{k+1}-\xhatk\|^2-\|\xhatk -x_k\|^2 \\
        &\qquad + 2\gamma_k\|F(\xhatk)-F(\tilde{x}_k)\|\|x_{k+1}-\xhatk\| - 2\gamma_k\inprd{F(\xhatk)}{\xhatk - y}
    \end{aligned} \label{eg popov descent}
    \end{equation}
    where $\tilde{x}_k=\begin{cases}
        x_k &\text{ for \eg} \\
        \xhat_{k-1} &\text{ for \peg}
    \end{cases}$
\end{lemma}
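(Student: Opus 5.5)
We prove \eqref{eg popov descent} using only the projection inequalities \eqref{proj inprd}--\eqref{proj norm diff}, Cauchy--Schwarz, and the update rules. The two methods are handled together by noting that in both
\[
\xhatk = P_U(x_k-\gamma_k F(\tilde{x}_k)), \qquad x_{k+1}=P_U(x_k-\gamma_kF(\xhatk)).
\]
For \eg this holds with $\tilde{x}_k=x_k$, and for \peg with $\tilde{x}_k=\xhat_{k-1}$. No Lipschitz-type property of $F$ is needed.

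\textbf{Step 1 (second projection).} Apply \eqref{proj norm diff} with $u=y\in U$ and $x=x_k-\gamma_kF(\xhatk)$. Expanding both squares and cancelling $\gamma_k^2\|F(\xhatk)\|^2$ gives
\[
\|x_{k+1}-y\|^2\le \|x_k-y\|^2-\|x_{k+1}-x_k\|^2-2\gamma_k\inprd{F(\xhatk)}{x_{k+1}-y}.
\]
Next split $\inprd{F(\xhatk)}{x_{k+1}-y}=\inprd{F(\xhatk)}{\xhatk-y}+\inprd{F(\xhatk)}{x_{k+1}-\xhatk}$. Also expand
\[
\|x_{k+1}-x_k\|^2=\|x_{k+1}-\xhatk\|^2+\|\xhatk-x_k\|^2+2\inprd{x_{k+1}-\xhatk}{\xhatk-x_k}.
\]

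\textbf{Step 2 (first projection).} Apply \eqref{proj inprd} to the point $x_k-\gamma_kF(\tilde{x}_k)$, whose projection is $\xhatk$, with $u=x_{k+1}\in U$:
\[
\inprd{x_k-\gamma_kF(\tilde{x}_k)-\xhatk}{x_{k+1}-\xhatk}\le 0 .
\]
Rearranged, this says $-2\inprd{x_{k+1}-\xhatk}{\xhatk-x_k}\le 2\gamma_k\inprd{F(\tilde{x}_k)}{x_{k+1}-\xhatk}$.

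\textbf{Step 3 (combine).} Substitute Step 2 into Step 1. The remaining cross terms are
\[
2\gamma_k\inprd{F(\tilde{x}_k)-F(\xhatk)}{x_{k+1}-\xhatk}.
\]
By \eqref{cauchy-schwarz} this is at most $2\gamma_k\|F(\xhatk)-F(\tilde{x}_k)\|\|x_{k+1}-\xhatk\|$, which yields \eqref{eg popov descent}.

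The only delicate point is sign bookkeeping in Steps 1--3. In particular, the inner product coming from expanding $\|x_{k+1}-x_k\|^2$ must be cancelled exactly by the variational inequality of the first projection, with $x_{k+1}$ used as the test point. Once the correct test point is chosen, the argument is routine.
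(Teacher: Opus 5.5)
Your proposal is correct and follows essentially the same route as the paper. Both arguments apply \eqref{proj norm diff} to the second projection, expand $\|x_{k+1}-x_k\|^2$ through $\xhatk$, and bound the resulting cross term with \eqref{proj inprd} for the first projection using $x_{k+1}\in U$ as the test point, before finishing with Cauchy--Schwarz; the only differences are cosmetic ordering, and, like the paper, your final step implicitly uses $\gamma_k\ge 0$.
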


\begin{proof}
    Let $F(\tilde{x}_k)$ denote $F(x_k)$ for \eg method and $F(\hat{x}_{k-1})$ for \peg method. Then we have the following:
    \begin{align}
        \|x_{k+1}-y\|^2 &= \|P_U(x_k-\gamma_k F(\xhatk)) - y\|^2 \notag\\
        &\leqfrom{\ref{proj norm diff}} \|x_k - \gamma_kF(\xhatk)-y\|^2 - \|x_k-\gamma_kF(\xhatk)-x_{k+1}\|^2 \notag\\
        &= \|x_k-y\|^2 - \|x_{k+1}-x_k\|^2-2\gamma_k\inprd{F(\xhatk)}{x_{k+1}-y} \notag\\
        &= \|x_k-y\|^2 - \|x_{k+1}-\xhatk+\xhatk -x_k\|^2-2\gamma_k\inprd{F(\xhatk)}{x_{k+1}-y} \notag\\
        &= \|x_k-y\|^2 - \|x_{k+1}-\xhatk\|^2-\|\xhatk -x_k\|^2 \notag\\
        &\qquad- 2\inprd{x_{k+1}-\xhatk}{\xhatk -x_k}-2\gamma_k\inprd{F(\xhatk)}{x_{k+1}-y} \notag\\
        &= \|x_k-y\|^2 - \|x_{k+1}-\xhatk\|^2 - \|\xhatk -x_k\|^2  \notag\\
        &\qquad- 2\inprd{x_{k+1}-\xhatk}{\xhatk -(x_k-\gamma_k F(\tilde{x}_k))} \notag\\
        &\qquad+2\gamma_k\inprd{x_{k+1}-\xhatk}{F(\tilde{x}_k)}-2\gamma_k\inprd{F(\xhatk)}{x_{k+1}-y} \notag\\
        &\leqfrom{\ref{proj inprd}} \|x_k-y\|^2 - \|x_{k+1}-\xhatk\|^2-\|\xhatk -x_k\|^2  \notag\\
        &\qquad + 2\gamma_k\inprd{x_{k+1}-\xhatk}{F(\tilde{x}_k)}-2\gamma_k\inprd{F(\xhatk)}{x_{k+1}-y}  \notag\\
        &= \|x_k-y\|^2 - \|x_{k+1}-\xhatk\|^2-\|\xhatk -x_k\|^2 \notag\\
        &\qquad + 2\gamma_k\inprd{F(\tilde{x}_k)-F(\xhatk)}{x_{k+1}-\xhatk} - 2\gamma_k\inprd{F(\xhatk)}{\xhatk - y} \notag\\
        &\leqfrom{\ref{cauchy-schwarz}} \|x_k-y\|^2 - \|x_{k+1}-\xhatk\|^2-\|\xhatk -x_k\|^2 \notag\\
        &\qquad + 2\gamma_k\|F(\xhatk)-F(\tilde{x}_k)\|\|x_{k+1}-\xhatk\| - 2\gamma_k\inprd{F(\xhatk)}{\xhatk - y} \notag
    \end{align}
    as desired.
\end{proof}
\newpage

\section{$(L_0,L_1)$-Lipschitz: Setting and Stepsizes}\label{app:stepsize section}
In this section, we first provide a discussion of the limiting behavior of the $(L_0,L_1)$-Lipschitz assumption and our stepsizes, and then we provide proofs of the Lemmas from this section.

\subsection{Limiting Behavior of $(L_0,L_1)$-Lipschitz Setting and Stepsizes}\label{app:limiting behavior}
In this subsection, we discuss the limiting behavior of the $\alpha$-symmetric $(L_0,L_1)$-Lipschitz setting and stepsizes, specifically as $L_1\rightarrow 0$ or $\alpha\rightarrow0$. First, note that as $L_1\rightarrow 0$, we have that Assumption~\ref{assmpt: l0 l1} simplifies to
\begin{equation*}
    \| F(x) - F(y)\| \leq L_0 \| x- y\| \quad \forall x, y \in U,
\end{equation*}
which is simply an $L_0$-Lipschitz assumption. Similarly, as $\alpha\rightarrow 0$, Assumption~\ref{assmpt: l0 l1} becomes
\begin{equation*}
    \| F(x) - F(y)\| \leq \left( L_0 + L_1  \right) \| x- y\| \quad \forall x, y \in U,
\end{equation*}
which is a $(L_0+L_1)$-Lipschitz assumption.

We now consider the limiting behavior of each of our stepsizes. First, recall from Lemma~\ref{lem: l0l1 smooth} that 
\begin{align*}
    K_0&=L_0(2^{\alpha^2/(1-\alpha)}+1) \\
    K_1&=L_12^{\alpha^2/(1-\alpha)}3^\alpha\\
    K_2&=L_1^{1/(1-\alpha)}2^{\alpha^2/(1-\alpha)}3^\alpha(1-\alpha)^{\alpha/(1-\alpha)}
\end{align*}
Thus, as $L_1\rightarrow 0$, $K_1,K_2\rightarrow 0$ while $K_0$ remains unchanged. As $\alpha\rightarrow 0$, we have $K_0\rightarrow2 L_0$ while $K_1,K_2\rightarrow L_1$. For \eg, this implies that as $L_1\rightarrow 0$,
\begin{equation*}
    \gamma_{k,\eg}\rightarrow\begin{cases}
        \frac{\nu}{L_0}&\ifaone \\
        \frac{\nu}{2K_0}&\ifasmall
    \end{cases}
\end{equation*}
When $\alpha\rightarrow 0$, we have
\begin{equation*}
    \gamma_{k,\eg}\rightarrow \frac{\nu}{4(L_0+L_1)}
\end{equation*}
For \popov, when $\alpha=1$ the derivations follow a similarly straightforward manner. However, when $\alpha\in (0,1)$, we have to treat the denominator of the stepsize more carefully due to the presence of the term
\begin{equation*}
    K_2\p{\|x_k-\xhat_{k-1}\|+\frac{\nu}{K_1}\|F(\xhat_{k-1})\|^{1-\alpha}}^{\nicefrac{\alpha}{(1-\alpha)}}
\end{equation*}
Here, as we take the limit of $L_1\rightarrow 0$, the $\frac{\nu}{K_1}$ term will become unbounded, but we will see that this term overall will still go to zero due to the presence of $K_2$. Note in the following $C$ represents all constant terms that do not include $L_1$.
\begin{align*}
    \lim_{L_1\rightarrow 0}    K_2\p{\|x_k-\xhat_{k-1}\|+\frac{\nu}{K_1}\|F(\xhat_{k-1})\|^{1-\alpha}}^{\nicefrac{\alpha}{(1-\alpha)}}&= C\lim_{L_1\rightarrow 0 }K_2\p{K_1^{-1}}^{\nicefrac{\alpha}{(1-\alpha)}}\\
    &= C\lim_{L_1\rightarrow 0 } L_1^{1/(1-\alpha)}\p{L_1^{-1}}^{\nicefrac{\alpha}{(1-\alpha)}} \\
    &= C\lim_{L_1\rightarrow 0}L_1^{\nicefrac{(1-\alpha)}{(1-\alpha)}} \\
    &= 0
\end{align*}
Using this and the definition of the \popov stepsizes from (\ref{Popov stepsizes}), we have that as $L_1\rightarrow 0$,
\begin{align*}
    \gamma_{k,\popov}\rightarrow\begin{cases}
        \frac{\nu}{L_0}&\ifaone \\
        \frac{\nu}{K_0} &\ifasmall
    \end{cases}
\end{align*}
Lastly, as $\alpha\rightarrow 0$, we obtain
\begin{equation*}
    \gamma_{k,\popov} \rightarrow \frac{\nu}{2(L_0+L_1)}
\end{equation*}
To summarize, for each of our stepsizes, as $L_1\rightarrow 0$ and we thereby approach the $L_0$-Lipschitz setting, our stepsizes approach stepsizes proportional to $\frac{1}{L_0}$, while as $\alpha\rightarrow 0$ and we approach the $(L_0+L_1)$-Lipschitz setting, our stepsizes become proportional to $\frac{1}{L_0+L_1}$. These are the expected stepsizes for $L$-Lipschitz VIs. As a final note, we only make asymptotic claims regarding $L_1\rightarrow 0$, since as noted above, when $\alpha \in (0,1)$ our stepsizes for \peg become undefined when $L_1=0$. Additionally, our proofs for both \eg and \peg use the fact that $L_1>0$.

    \subsection{Proof of Lemma~\ref{lem:lipschitz analogue}}\label{app:proof_of_lipschitz_analogue}
        We separate the proposition into two lemmas, one corresponding to \eg and the other to \peg.
        \begin{lemma}
        Let $F$ be $\alpha$-symmetric $(L_0,L_1)$-Lipschitz. Then iterates of \eg with stepsizes given by (\ref{EG Stepsizes}) with $\nu<1$ satisfy
        \begin{equation}
            \gamma_k\|F(x_k)-F(\xhatk)\| \leq C_\nu \|x_k-\xhatk\| \label{EG f bound}
        \end{equation}
        where $C_\nu=\nu e^\nu$ if $\alpha=1$ and $\nu$ if $\alpha \in(0,1)$.
    \end{lemma}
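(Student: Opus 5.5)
We will prove the bound \eqref{EG f bound} by applying Proposition~\ref{lem: l0l1 smooth} with $y=x_k$ and $x=\xhatk$, after first controlling the extrapolation length $d:=\|\xhatk-x_k\|$ by the stepsize. The key preliminary step is to note that $x_k\in U$, so $x_k=P_U(x_k)$. Nonexpansiveness \eqref{Nonexpansive} then gives
\[
d=\|P_U(x_k-\gamma_kF(x_k))-P_U(x_k)\|\le \gamma_k\|F(x_k)\|.
\]
If $F(x_k)=0$, then $\xhatk=x_k$ and the claim is trivial, so we may assume $\|F(x_k)\|>0$. We then split into the two cases for $\alpha$.

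\textbf{Case $\alpha=1$.} Since $\gamma_k=\nu/(L_0+L_1\|F(x_k)\|)\le \nu/(L_1\|F(x_k)\|)$, we get $L_1d\le\gamma_kL_1\|F(x_k)\|\le\nu$. Applying \eqref{a1 l0l1} with $y=x_k$ gives
\[
\|F(\xhatk)-F(x_k)\|\le (L_0+L_1\|F(x_k)\|)e^{L_1d}d\le (L_0+L_1\|F(x_k)\|)e^{\nu}d.
\]
Multiplying by $\gamma_k$ cancels the prefactor exactly, which yields $\gamma_k\|F(\xhatk)-F(x_k)\|\le \nu e^\nu d$.

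\textbf{Case $\alpha\in(0,1)$.} Write $N:=\|F(x_k)\|$ and let $Q$ denote the denominator of \eqref{EG Stepsizes}. By \eqref{a small l0l1} with $y=x_k$,
\[
\gamma_k\|F(\xhatk)-F(x_k)\|\le \gamma_k\bigl(K_0+K_1N^\alpha\bigr)d+\gamma_kK_2d^{p_\alpha}d.
\]
The first term is at most $\tfrac{\nu}{2}d$, because $Q\ge 2K_0+2K_1N^\alpha$.

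The second term is the main point, since one must absorb the superlinear factor $K_2 d^{p_\alpha}$. To do this, use $Q\ge 2^{1-\alpha}K_2^{1-\alpha}N^\alpha$, which gives
\[
d\le \gamma_kN\le \frac{\nu N^{1-\alpha}}{2^{1-\alpha}K_2^{1-\alpha}}.
\]
Raising to the power $p_\alpha=\alpha/(1-\alpha)$ gives $d^{p_\alpha}\le \nu^{p_\alpha}N^\alpha/(2^\alpha K_2^\alpha)$. Hence $K_2d^{p_\alpha}\le \nu^{p_\alpha}K_2^{1-\alpha}N^\alpha/2^\alpha$. Multiplying by $\gamma_k\le \nu/(2^{1-\alpha}K_2^{1-\alpha}N^\alpha)$ gives
\[
\gamma_kK_2d^{p_\alpha}\le \frac{\nu^{1+p_\alpha}}{2}\le \frac{\nu}{2},
\]
where the last step uses $\nu<1$. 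Summing the two pieces gives $\gamma_k\|F(\xhatk)-F(x_k)\|\le \nu d$, as claimed.

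The main obstacle is this last estimate: choosing the split of the denominator of \eqref{EG Stepsizes} so that the exponents of $N$ cancel exactly. The term $2^{1-\alpha}K_2^{1-\alpha}$ in the stepsize is designed precisely for this cancellation.
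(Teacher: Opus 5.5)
Your proposal is correct and follows essentially the same argument as the paper. Nonexpansiveness of $P_U$ gives $\|\hat{x}_k-x_k\|\le\gamma_k\|F(x_k)\|$, and Proposition~\ref{lem: l0l1 smooth} with $y=x_k$ then gives the exact cancellation $\gamma_k(L_0+L_1\|F(x_k)\|)=\nu$ for $\alpha=1$; for $\alpha\in(0,1)$ you use the split $\frac{\nu}{2}+\frac{\nu^{1/(1-\alpha)}}{2}\le\nu$, based on your $Q\ge 2K_0+2K_1\|F(x_k)\|^\alpha$ and $Q\ge 2^{1-\alpha}K_2^{1-\alpha}\|F(x_k)\|^\alpha$ (your divisions by $L_1\|F(x_k)\|$ and $K_2^{1-\alpha}$ tacitly assume $L_1>0$, but for $L_1=0$ those terms vanish, and the paper likewise works with $L_1>0$).
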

        \begin{proof}
        First, note from the \eg algorithm definition and since $x_k\in U$, we have
        \begin{align}
            \|x_k-\hat{x}_k\|&=\|P_U(x_k)-P_U(x_k-\gamma_kF(x_k))\| \notag\\
            &\leqfrom{\ref{Nonexpansive}} \gamma_k\|F(x_k)\|\label{EG half step f}
        \end{align}
      Now, starting with the case of $\alpha = 1$, we have
      \begin{align*}
          \gamma_k\|F(\xhatk)-F(x_k)\|&\leqfrom{\ref{a1 l0l1}} \gamma_k\|x_k-\hat{x}_k\|(L_0+L_1\|F(x_k)\|)\exp(L_1\|x_k-\hat{x}_k\|) \\
          &\eqfrom{\ref{EG Stepsizes}} \nu\|x_k-\xhatk\|\exp(L_1\|x_k-\xhatk\|) \\
          &\leqfrom{\ref{EG half step f}} \nu \|x_k-\xhatk\|\exp(L_1\gamma_k\|F(x_k)\|) \\ 
          &\eqfrom{\ref{EG Stepsizes}} \nu \|x_k-\xhatk\|\exp\p{\frac{\nu L_1\|F(x_k)\|}{L_0+L_1\|F(x_k)\|}}\\
          &\leqfrom{\ref{EG Stepsizes}} \nu e^\nu \|x_k-\xhatk \|
      \end{align*}
    
    We now turn to $\alpha \in (0,1)$. We have 
    \begin{align*}
    \gamma_k\|F(\xhatk)-F(x_k)\|
            &\leqfrom{\ref{a small l0l1}} \gamma_k(K_0+K_1\|F(x_k)\|^\alpha + K_2(\gamma_k\|F(x_k)\|)^{\alpha/(1-\alpha)})\|\xhatk-x_k\| \\ &= \p{\gamma_k(K_0+K_1\|F(x_k)\|^\alpha)+\gamma_k^{1/(1-\alpha)}K_2\|F(x_k)\|^{\alpha/(1-\alpha)}}\|\xhatk-x_k\| \\
            &= \frac{\nu(K_0+K_1\|F(x_k)\|^\alpha)\|\xhatk-x_k\|}{2K_0+(2K_1+2^{1-\alpha}K_2^{1-\alpha})\|F(x_k)\|^\alpha} \\
            &\qquad+ \frac{\nu^{1/(1-\alpha)}K_2\|F(x_k)\|^{\alpha/(1-\alpha)}\|\xhatk-x_k\|}{(2K_0+(2K_1+2^{1-\alpha}K_2^{1-\alpha})\|F(x_k)\|^\alpha)^{1/(1-\alpha)}} \\
            &\leq \p{\frac{\nu}{2}+\frac{\nu^{1/(1-\alpha)}}{2}}\|\xhatk-x_k\| \\
            &\leq \nu \|\xhatk-x_k\|
        \end{align*}
    where the last step uses $\nu< 1$.
    \end{proof}
    We note that the above lemma for \eg follows the intermediate steps of proofs in \cite{choudhury2025extragradient} where the unconstrained case of $\alpha$-symmetric $(L_0,L_1)$-Lipschitz operators was treated, though we must also use the nonexpansiveness of the projection since we are working in the constrained setting.
    
        \begin{lemma}
        Let $F$ be $\alpha$-symmetric $(L_0,L_1)$-Lipschitz. Then iterates of \peg with stepsizes given by \eqref{Popov stepsizes} satisfy
        \begin{equation}
        \gamma_k\|F(\xhatk)-F(\xhat_{k-1})\| \leq C_\nu\|\xhatk-\xhat_{k-1} \|\label{popov f bound}
        \end{equation}
        where $C_\nu=\nu e^\nu$ if $\alpha=1$ and $\nu$ if $\alpha\in (0,1)$, and $\nu<1$.
        \end{lemma}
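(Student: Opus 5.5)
The plan mirrors the \eg argument: apply Proposition~\ref{lem: l0l1 smooth} with $y=\xhat_{k-1}$, $x=\xhatk$, and then show the stepsize \eqref{Popov stepsizes} absorbs both the $\|F(\xhat_{k-1})\|$ factor and the distance-dependent factor. The new difficulty is that $\|\xhatk-\xhat_{k-1}\|$ cannot be bounded by $\gamma_k\|F(\xhat_{k-1})\|$ alone. It also involves $d_k=\|x_k-\xhat_{k-1}\|$. So the first step is a distance bound. Since $\xhat_{k-1}\in U$, nonexpansiveness \eqref{Nonexpansive} gives
\[
\|\xhatk-\xhat_{k-1}\| = \|P_U(x_k-\gamma_kF(\xhat_{k-1}))-P_U(\xhat_{k-1})\|\le d_k+\gamma_k\|F(\xhat_{k-1})\|.
\]
This is exactly the quantity the stepsize was built to control.

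For $\alpha=1$, write $\delta=\|\xhatk-\xhat_{k-1}\|$. Then \eqref{a1 l0l1} gives
\[
\gamma_k\|F(\xhatk)-F(\xhat_{k-1})\|\le \gamma_k(L_0+L_1\|F(\xhat_{k-1})\|)e^{L_1\delta}\delta .
\]
The first step gives $e^{L_1\delta}\le e^{L_1d_k}e^{L_1\gamma_k\|F(\xhat_{k-1})\|}$. Substituting $\gamma_k$ cancels the factor $(L_0+L_1\|F(\xhat_{k-1})\|)e^{L_1d_k}$, which leaves $\nu\,e^{L_1\gamma_k\|F(\xhat_{k-1})\|}\delta$. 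Finally,
\[
L_1\gamma_k\|F(\xhat_{k-1})\|\le \nu\frac{L_1\|F(\xhat_{k-1})\|}{L_0+L_1\|F(\xhat_{k-1})\|}\le\nu,
\]
because $e^{L_1d_k}\ge1$. This yields $C_\nu=\nu e^\nu$.

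For $\alpha\in(0,1)$, \eqref{a small l0l1} gives the bound
\[
\gamma_k\bigl(K_0+K_1\|F(\xhat_{k-1})\|^\alpha+K_2\delta^{p_\alpha}\bigr)\delta .
\]
The key step is to show $\delta\le s_{k,\alpha}$, that is, $\gamma_k\|F(\xhat_{k-1})\|\le \frac{\nu}{K_1}\|F(\xhat_{k-1})\|^{1-\alpha}$. This holds because $\gamma_k\le \nu/(K_1\|F(\xhat_{k-1})\|^\alpha)$ once the other terms of the denominator are dropped. The case $F(\xhat_{k-1})=0$ is trivial. Then $K_2\delta^{p_\alpha}\le K_2 s_{k,\alpha}^{p_\alpha}$, so the bracket is at most the stepsize denominator. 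Hence $\gamma_k(\cdots)\le\nu$, which gives $C_\nu=\nu$.

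The main obstacle is the $\alpha\in(0,1)$ case, because the mismatch of powers blocks a direct absorption of $(\gamma_k\|F\|)^{p_\alpha}$. The auxiliary term $\frac{\nu}{K_1}\|F\|^{1-\alpha}$ in $s_{k,\alpha}$ is designed precisely to bound that step length. Once the distance inequality and $\delta\le s_{k,\alpha}$ are in place, the rest follows directly. The condition $\nu<1$ is not actually needed for this bound; it is only carried along for consistency.
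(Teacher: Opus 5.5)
Your proposal is correct and follows the paper's proof essentially step for step: you use the nonexpansiveness bound $\|\xhatk-\xhat_{k-1}\|\le d_k+\gamma_k\|F(\xhat_{k-1})\|$. For $\alpha=1$ you bound only the exponential factor and use $L_1\gamma_k\|F(\xhat_{k-1})\|\le\nu$, and for $\alpha\in(0,1)$ you use $\gamma_k\|F(\xhat_{k-1})\|\le\frac{\nu}{K_1}\|F(\xhat_{k-1})\|^{1-\alpha}$ so that the bracket is dominated by the stepsize denominator. Your remark that $\nu<1$ plays no role here is also accurate; the paper's argument does not use it either, unlike the EG version of the lemma, where it is needed in the case $\alpha\in(0,1)$.
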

        \begin{proof}
          First, note that by the projection inequality and the definition of \popov, we have 
        \begin{align}
            \|\xhatk-\xhat_{k-1}\|&=\|P_U(x_k-\gamma_kF(\xhat_{k-1}))-\xhat_{k-1}\| \notag\\
            &\leqfrom{\ref{Nonexpansive}} \|x_k-\xhat_{k-1} - \gamma_kF(\xhat_{k-1})\|\notag \\
            &\leq \|x_k-\xhat_{k-1}\|+\gamma_k\|F(\xhat_{k-1})\|\label{popov half step diff}
        \end{align}
        We now treat each $\alpha$ separately. Starting with $\alpha = 1$:
            \begin{align}
            \gamma_k&\|F(\xhatk)-F(\xhat_{k-1})\| \leqfrom{\ref{a1 l0l1}} \gamma_k\|\xhatk-\xhat_{k-1}\|(L_0+L_1\|F(\xhat_{k-1})\|)\exp(L_1\|\xhatk-\xhat_{k-1}\|) \notag\\
            &\leqfrom{\ref{popov half step diff}} \gamma_k\|\xhatk-\xhat_{k-1}\|(L_0+L_1\|F(\xhat_{k-1})\|)\exp(L_1\|x_k-\xhat_{k-1}\|+L_1\gamma_k\|F(\xhat_{k-1})\|)\notag\\ 
            &\leq \gamma_k\|\xhatk-\xhat_{k-1}\|(L_0+L_1\|F(\xhat_{k-1})\|)\exp(L_1\|x_k-\xhat_{k-1}\|+\nu)\notag\\
            &= \nu e^\nu\|\xhatk-\xhat_{k-1}\| \label{f bound a1 popov}
        \end{align}
        Now, for $\alpha \in (0,1)$:
        \begin{align}
            \gamma_k &\|F(\xhatk)-F(\xhat_{k-1})\| \leqfrom{\ref{a small l0l1}} \gamma_k \|\xhatk-\xhat_{k-1}\|\p{K_0+K_1\|F(\xhat_{k-1})\|^\alpha + K_2\|\xhatk-\xhat_{k-1}\|^{\alpha/(1-\alpha)}}\notag\\
            &\leqfrom{\ref{popov half step diff}} \resizebox{0.9\linewidth}{!}{$\gamma_k \|\xhatk-\xhat_{k-1}\|\p{K_0+K_1\|F(\xhat_{k-1})\|^\alpha + K_2\p{\|x_k-\xhat_{k-1}\|+\gamma_k\|F(\xhat_{k-1})\|}^{\alpha/(1-\alpha)}}$}\notag\\
            &\leq \resizebox{0.925\linewidth}{!}{$\gamma_k \|\xhatk-\xhat_{k-1}\|\p{K_0+K_1\|F(\xhat_{k-1})\|^\alpha + K_2\p{\|x_k-\xhat_{k-1}\|+\frac{\nu}{K_1}\|F(\xhat_{k-1})\|^{1-\alpha}}^{\alpha/(1-\alpha)}}$}\notag\\
            &=\nu\|\xhatk-\xhat_{k-1}\|\label{f bound 2}
        \end{align}
    \end{proof}

    \subsection{Proof of Lemma~\ref{lem:EG step lower bound}}
    \begin{lemma}
        Let $F$ be $\alpha$-symmetric $(L_0,L_1)$-Lipschitz. Then if the iterates of \eg with stepsizes given by (\ref{EG Stepsizes}) satisfy $\|x_{k+1}-x^*\|\leq\|x_0-x^*\|$ for all $k\in 0,1,2,...$ we have 
        \begin{equation}
            \gamma_k\geq \ugamma = \begin{cases}
                \frac{\nu}{L_0+L_1\p{ \|x_0-x^*\|(L_0+L_1\|F(x^*)\|)\exp(L_1\|x_0-x^*\|) + \|F(x^*)\|}} &\ifaone \\
                \frac{\nu}{2K_0+(2K_1 + 2^{1-\alpha}K_2^{1-\alpha})\overF^{\alpha}} &\ifasmall
            \end{cases} \label{EG step lower bound}
        \end{equation}
        where $\overF=\|x_0-x^*\|(K_0+K_1\|F(x^*)\|^\alpha + K_2\|x_0-x^*\|^{\alpha/(1-\alpha)}) + \|F(x^*)\|$.
    \end{lemma}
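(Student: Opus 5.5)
The plan is to bound $\|F(x_k)\|$ uniformly along the trajectory and then substitute that bound into the stepsize formula \eqref{EG Stepsizes}. The denominators there are nondecreasing in $\|F(x_k)\|$, so an upper bound on $\|F(x_k)\|$ gives a lower bound on $\gamma_k$. The hypothesis $\|x_k-x^*\|\le\|x_0-x^*\|=D_0$ for all $k$ (the case $k=0$ is trivial and $k\ge1$ is the assumption) is the only information about the iterates we need.

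\textbf{Step 1 (bounding $\|F(x_k)\|$).} By the triangle inequality, $\|F(x_k)\|\le\|F(x^*)\|+\|F(x_k)-F(x^*)\|$. We bound the second term using Proposition~\ref{lem: l0l1 smooth} with $x=x_k$ and $y=x^*$. This orientation is deliberate: it puts the operator norm at the fixed point $x^*$, which is a constant, rather than at $x_k$.
\begin{itemize}
\item For $\alpha=1$, \eqref{a1 l0l1} gives $\|F(x_k)-F(x^*)\|\le(L_0+L_1\|F(x^*)\|)e^{L_1\|x_k-x^*\|}\|x_k-x^*\|$. The right-hand side is increasing in $\|x_k-x^*\|$, so it is at most $(L_0+L_1\|F(x^*)\|)e^{L_1D_0}D_0$. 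This yields $\|F(x_k)\|\le M_1$.
\item For $\alpha\in(0,1)$, \eqref{a small l0l1} gives $\|F(x_k)-F(x^*)\|\le(K_0+K_1\|F(x^*)\|^\alpha+K_2\|x_k-x^*\|^{p_\alpha})\|x_k-x^*\|$. This is again monotone in $\|x_k-x^*\|$ since $p_\alpha>0$, so it is at most $D_0(K_0+K_1\|F(x^*)\|^\alpha+K_2D_0^{p_\alpha})$. Hence $\|F(x_k)\|\le M_\alpha=\overF$.
\end{itemize}

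\textbf{Step 2 (substitution).} For $\alpha=1$, $\gamma_k=\nu/(L_0+L_1\|F(x_k)\|)\ge\nu/(L_0+L_1M_1)$. For $\alpha\in(0,1)$, the map $t\mapsto t^\alpha$ is increasing and the coefficient $A_\alpha=2K_1+2^{1-\alpha}K_2^{1-\alpha}$ is nonnegative, so $\gamma_k\ge\nu/(2K_0+A_\alpha\overF^\alpha)$. Both match \eqref{EG step lower bound}.

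\textbf{Main care point.} The only real subtlety is choosing the orientation of Proposition~\ref{lem: l0l1 smooth} so that the $\|F(\cdot)\|$ term is evaluated at $x^*$, not at $x_k$. Otherwise the resulting inequality would be implicit in $\|F(x_k)\|$. Beyond that, the argument only uses monotonicity in the distance, and the distance is controlled by the hypothesis.
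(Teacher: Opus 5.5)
Your proposal is correct and follows essentially the same route as the paper's proof. You split $\|F(x_k)\|\le\|F(x_k)-F(x^*)\|+\|F(x^*)\|$, bound the difference with the Proposition at $y=x^*$, and use monotonicity in $\|x_k-x^*\|\le\|x_0-x^*\|$. You then substitute the resulting bound into the stepsize denominator, which is nondecreasing in $\|F(x_k)\|$.
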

    \begin{proof}
      For each value of $\alpha$, we upper bound $\|F(x_k)\|$ in order to lower bound $\gamma_k$ , starting with
      \begin{equation*}
          \|F(x_k)\|\leq\|F(x_k)-F(x^*)\|+\|F(x^*)\|
      \end{equation*}
      and using the smoothness of the operator to bound the first term. 
        For $\alpha=1$, this yields an upper bound on the norm of
        \begin{align}
        \|F(x_k)\| &\leqfrom{\ref{a1 l0l1}} \|x_k-x^*\|(L_0+L_1\|F(x^*)\|)\exp(L_1\|x_k-x^*\|) + \|F(x^*)\| \notag \\
        &\leq \|x_0-x^*\|(L_0+L_1\|F(x^*)\|)\exp(L_1\|x_0-x^*\|) + \|F(x^*)\| \label{eg f upper bound}
        \end{align}
    Substituting this upper bound on $\|F(x_k)\|$ into the stepsize expression for \eg (\ref{EG Stepsizes}), we find
    \begin{align*}
        \gamma_k&\geq \frac{\nu}{L_0+L_1\p{ \|x_0-x^*\|(L_0+L_1\|F(x^*)\|)\exp(L_1\|x_0-x^*\|) + \|F(x^*)\|}}
        \end{align*}
        Similarly, for $\alpha \in (0,1)$, we have
        \begin{align*}
         \|F(x_k)\|   &\leqfrom{\ref{a small l0l1}} \|x_k-x^*\|(K_0+K_1\|F(x^*)\|^\alpha + K_2\|x_k-x^*\|^{\alpha/(1-\alpha)}) + \|F(x^*)\| \\
        &\leq \|x_0-x^*\|(K_0+K_1\|F(x^*)\|^\alpha + K_2\|x_0-x^*\|^{\alpha/(1-\alpha)}) + \|F(x^*)\| \\
        &=\overF
        \end{align*}
        as our upper bound for the operator norm, which gives us the desired stepsize lower bound:
        \begin{align*}
         \gamma_k&\geq \frac{\nu}{2K_0+(2K_1 + 2^{1-\alpha}K_2^{1-\alpha})\overF^{\alpha}}
        \end{align*}
    \end{proof}

\subsection{Proof of Lemma~\ref{lem:popov step lower bound}}\label{app: proof of lem popov step lower bound}
    \begin{lemma}
        Let $F$ be $\alpha$-symmetric $(L_0,L_1)$-Lipschitz. Then if the iterates of \peg with stepsizes given by (\ref{Popov stepsizes}) and $\nu$ chosen so that $C_\nu< 1$  satisfy $R_{k+1}\leq R_0$ for all $k\in 0,1,2,...$, we have
    \begin{equation}
        \gamma_k\geq\ugamma=\begin{cases}
            \frac{\nu}{(L_0+L_1(\sqrt{2}\|x_0-x^*\|(L_0+L_1\|F(x^*)\|)\exp(\sqrt{2}L_1\|x_0-x^*\|) + \|F(x^*)\|))\exp\p{\frac{\nu^2 e^\nu}{1-\nu e^\nu}}} &\ifaone\\
            \frac{\nu}{K_0+\p{K_1 + K_2\p{\frac{\nu^2}{\p{K_1+K_2\p{\frac{\nu}{K_1}}^{\alpha/(1-\alpha)}}(1-\nu)}+\frac{\nu}{K_1}}^{\alpha/(1-\alpha)}}\overF^{\alpha}} &\ifasmall
        \end{cases}\label{popov step lower bound}
    \end{equation}
        where $R_k=\|x_k-x^*\|^2+\|x_k-\xhat_{k-1}\|^2$  and $\overF= \sqrt{2}\|x_0-x^*\|(K_0+K_1\|F(x^*)\|^\alpha+2^{\alpha/(2(1-\alpha))}K_2\|x_0-x^*\|^{\alpha/(1-\alpha)}) + \|F(x^*)\|$.
    \end{lemma}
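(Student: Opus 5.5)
The plan is to bound the two state-dependent quantities in the denominator of \eqref{Popov stepsizes}: $\|F(\xhat_{k-1})\|$ and $d_k=\|x_k-\xhat_{k-1}\|$. The first will be bounded using $R_k\le R_0$ together with Proposition~\ref{lem: l0l1 smooth}. The second will be bounded by a contraction-type recursion that comes from Lemma~\ref{lem:lipschitz analogue}. The recursion matters because it avoids any $\exp(L_1 D_0)$ factor in the $d_k$ part.

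\textbf{Step 1: operator norm.} Since $\xhat_{-1}=x_0$, we have $R_0=D_0^2$. For every $k$, $R_k\le R_0$ gives $d_k\le D_0$ and $D_k\le D_0$. Hence
\[
\|\xhat_{k-1}-x^*\|\le d_k+D_k\le\sqrt{2R_k}\le\sqrt2 D_0 .
\]
Write $\|F(\xhat_{k-1})\|\le F_*+\|F(\xhat_{k-1})-F(x^*)\|$ and apply \eqref{a1 l0l1} or \eqref{a small l0l1} with $y=x^*$ and distance at most $\sqrt2D_0$. This gives $\|F(\xhat_{k-1})\|\le M_{1,\mathrm{PEG}}$ for $\alpha=1$ and $\|F(\xhat_{k-1})\|\le M_{\alpha,\mathrm{PEG}}$ for $\alpha\in(0,1)$. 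For $k=0$ this holds trivially because $\xhat_{-1}=x_0$.

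\textbf{Step 2: recursion for $d_k$.} Both $x_{k+1}$ and $\xhatk$ are projections of points based at $x_k$. By nonexpansiveness \eqref{Nonexpansive}, then \eqref{popov f bound}, then \eqref{popov half step diff},
\[
d_{k+1}=\|P_U(x_k-\gamma_kF(\xhatk))-P_U(x_k-\gamma_kF(\xhat_{k-1}))\|\le\gamma_k\|F(\xhatk)-F(\xhat_{k-1})\|\le C_\nu\bigl(d_k+\gamma_k\|F(\xhat_{k-1})\|\bigr).
\]
The two cases are then handled as follows, each starting from $d_0=0$.
\begin{itemize}
\item For $\alpha=1$: the stepsize gives $L_1\gamma_k\|F(\xhat_{k-1})\|\le\nu$. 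Setting $e_k=L_1d_k$, we get $e_{k+1}\le C_\nu e_k+C_\nu\nu$. Induction with $C_\nu=\nu e^\nu<1$ yields $L_1d_k\le \nu^2e^\nu/(1-\nu e^\nu)$. Inserting this and Step 1 into the denominator $(L_0+L_1\|F(\xhat_{k-1})\|)e^{L_1d_k}$ gives $\gamma_k\ge\nu/\Delta$.
\item For $\alpha\in(0,1)$: use $s_{k,\alpha}\ge(\nu/K_1)\|F(\xhat_{k-1})\|^{1-\alpha}$, so that $K_2s_{k,\alpha}^{p_\alpha}\ge K_2(\nu/K_1)^{p_\alpha}\|F(\xhat_{k-1})\|^\alpha$. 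This yields
\[
\gamma_k\|F(\xhat_{k-1})\|\le \frac{\nu\|F(\xhat_{k-1})\|^{1-\alpha}}{K_1+K_2(\nu/K_1)^{p_\alpha}}\le\frac{\nu M_{\alpha,\mathrm{PEG}}^{1-\alpha}}{K_1+K_2(\nu/K_1)^{p_\alpha}} .
\]
The same geometric induction with ratio $\nu<1$ gives $d_k\le \dfrac{\nu^2M_{\alpha,\mathrm{PEG}}^{1-\alpha}}{(1-\nu)[K_1+K_2(\nu/K_1)^{p_\alpha}]}$. Hence $s_{k,\alpha}\le q_\alpha M_{\alpha,\mathrm{PEG}}^{1-\alpha}$, and so $K_2s_{k,\alpha}^{p_\alpha}\le K_2q_\alpha^{p_\alpha}M_{\alpha,\mathrm{PEG}}^{\alpha}$, using $(1-\alpha)p_\alpha=\alpha$. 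Substituting this together with $K_1\|F(\xhat_{k-1})\|^\alpha\le K_1M_{\alpha,\mathrm{PEG}}^\alpha$ into the denominator gives $\Delta$.
\end{itemize}

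\textbf{Main obstacle.} The delicate step is the $\alpha\in(0,1)$ recursion. The bound on $\gamma_k\|F(\xhat_{k-1})\|$ must be uniform in $k$, yet it is only available through the self-referential term $s_{k,\alpha}$ in the stepsize. The fix is to bound $d_k$ in units of $M_{\alpha,\mathrm{PEG}}^{1-\alpha}$, so that the resulting power matches the exponent $\alpha$ after raising to $p_\alpha$.

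A second point of care is that the recursion may use $\|F(\xhat_{k-1})\|\le M$ at every step. This is valid because Step 1 holds for all $k$ directly from $R_k\le R_0$, not from the induction hypothesis.
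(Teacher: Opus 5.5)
Your proposal is correct and follows essentially the same route as the paper: bound $\|\hat{x}_{k-1}-x^*\|\le\sqrt{2}D_0$ from $R_k\le R_0$ to control $\|F(\hat{x}_{k-1})\|$. The paper then bounds $d_k$ through the recursion $d_{k+1}\le C_\nu\bigl(d_k+\gamma_k\|F(\hat{x}_{k-1})\|\bigr)$, using the uniform bound on $\gamma_k\|F(\hat{x}_{k-1})\|$ ($\nu/L_1$, respectively $\nu\overline{F}^{1-\alpha}/(K_1+K_2(\nu/K_1)^{p_\alpha})$), and unwinds the geometric series exactly as you do. The only cosmetic difference is that you use $a+b\le\sqrt{2(a^2+b^2)}$ where the paper uses Young's inequality, which gives the identical $\sqrt{2}D_0$ bound.
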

    
    \begin{proof}
    To lower bound our stepsizes, we must upper bound each of $\|F(\xhat_{k-1})\|$ and $\|x_k-\xhat_{k-1}\|$. Starting with upper bounding $\|F(\xhat_{k-1})\|$, we have by the definition of $R_k$ and by initializing the algorithm with $\xhat_{-1}=x_0$ that
    \begin{equation}
        \|x_k-x^*\|^2+\|x_k-\xhat_{k-1}\|^2 \leq \|x_0-x^*\|^2 \label{popov descent exp}
    \end{equation}
    and similarly,
    \begin{align}
        \|\xhat_{k-1} - x^*\|^2 &\leqfrom{\ref{young ineq}} 2\|x_k-x^*\|^2 +2 \|x_k-\xhat_{k-1}\|^2 \notag\\
        &\leqfrom{\ref{popov descent exp}} 2\|x_0-x^*\|^2 \label{popov half from sol}
    \end{align}
    
    We now treat the cases of $\alpha=1$ and $\alpha\in (0,1)$ separately. In the case of $\alpha=1$, we have
    \begin{align}
        \|F(\xhat_{k-1})\|&\leq \|F(\xhat_{k-1})-F(x^*)\|+\|F(x^*)\| \notag\\
        &\leqfrom{\ref{a1 l0l1}} \|\xhat_{k-1}-x^*\|(L_0+L_1\|F(x^*)\|)\exp(L_1\|\xhat_{k-1}-x^*\|) + \|F(x^*)\|\notag \\
        &\leqfrom{\ref{popov half from sol}} \sqrt{2}\|x_0-x^*\|(L_0+L_1\|F(x^*)\|)\exp(\sqrt{2}L_1\|x_0-x^*\|) + \|F(x^*)\| \label{popov final f bound a1}
    \end{align}
    As for when $\alpha \in (0,1)$,
    \begin{align}
        \|F&(\xhat_{k-1})\|\leq \|F(\xhat_{k-1})-F(x^*)\|+\|F(x^*)\| \notag\\
        &\leqfrom{\ref{a small l0l1}} \|\xhat_{k-1}-x^*\|(K_0+K_1\|F(x^*)\|^\alpha+K_2\|\xhat_{k-1}-x^*\|^{\alpha/(1-\alpha)}) + \|F(x^*)\| \notag\\
        &\leqfrom{\ref{popov half from sol}} \sqrt{2}\|x_0-x^*\|(K_0+K_1\|F(x^*)\|^\alpha+2^{\alpha/(2(1-\alpha))}K_2\|x_0-x^*\|^{\alpha/(1-\alpha)}) + \|F(x^*)\|\notag\\
        &=\overF \label{popov final f bound asmall}
    \end{align}
    With these bounds on our operator norms established, we move onto bounding $\|x_k-\xhat_{k-1}\|$.  We have for any value of $\alpha$ that
    \begin{align}
        \|x_k-\xhat_{k-1}\| &\leqfrom{\ref{Nonexpansive}} \gamma_{k-1}\|F(\xhat_{k-1})-F(\xhat_{k-2})\| \notag\\
        &\leqfrom{\ref{popov f bound}} C_{\nu}\|\xhat_{k-1}-\xhat_{k-2}\| \notag\\
        &\leqfrom{\ref{popov half step diff}} C_{\nu}\p{\|x_{k-1}-\xhat_{k-2}\|+\gamma_{k-1}\|F(\xhat_{k-2})\|} \label{popov half step bound intermediate}
        \end{align}
    We now must bound $\gamma_{k-1}\|F(\xhat_{k-2})\|$ separately for each $\alpha$. In the case where $\alpha=1$, we have
    \begin{align*}
            \gamma_{k-1}\|F(\xhat_{k-2})\| &= \frac{\nu \|F(\xhat_{k-2})\|}{(L_0+L_1\|F(\xhat_{k-2})\|)\exp(L_1\|x_{k-1}-\xhat_{k-2}\|)} \\
            &\leq \frac{\nu}{L_1}
    \end{align*}
    and similarly for $\alpha \in (0,1)$, we have
    \begin{align*}
     \gamma_{k-1}\|F(\xhat_{k-2})\| &=   \frac{\nu\|F(\xhat_{k-2})\|}{K_0+K_1\|F(\xhat_{k-2})\|^\alpha + K_2(\|x_{k-1}-\xhat_{k-2}\|+\frac{\nu}{K_1}\|F(\xhat_{k-2})\|^{1-\alpha})^\frac{\alpha}{1-\alpha}} \\
     &\leq \frac{\nu\|F(\xhat_{k-2})\|}{\p{K_1+K_2\p{\frac{\nu}{K_1}}^{\alpha/(1-\alpha)}}\|F(\xhat_{k-2})\|^\alpha} \\
     &=\frac{\nu \|F(\xhat_{k-2})\|^{1-\alpha}}{K_1+K_2\p{\frac{\nu}{K_1}}^{\alpha/(1-\alpha)}} \\
     &\leq \frac{\nu \overF^{1-\alpha} }{K_1+K_2\p{\frac{\nu}{K_1}}^{\alpha/(1-\alpha)}}
    \end{align*}
    
    Defining $M=\begin{cases}
        \frac{\nu}{L_1}\ifaone \\
        \frac{\nu \overF^{1-\alpha}}{K_1+K_2\p{\frac{\nu}{K_1}}^{\alpha/(1-\alpha)}} \ifasmall
    \end{cases}$ we have from (\ref{popov half step bound intermediate}) 
    
    \begin{equation*}
        \|x_k-\xhat_{k-1}\| \leq  C_{\nu}\p{\|x_{k-1}-\xhat_{k-2}\|+M} 
    \end{equation*}
    We then unwind the recursion:
    \begin{align}
      \|x_k-\xhat_{k-1}\|  &\leq C_{\nu}^k\|x_0-\xhat_{-1}\|+\sum_{i=1}^k MC_\nu^i\notag\\
      &= M\sum_{i=1}^k C_\nu^i \notag\\
      &\leq \frac{MC_\nu}{1-C_\nu}\notag\\
        &=\begin{cases}
            \frac{\nu^2 e^\nu}{L_1(1-\nu e^\nu)}\ifaone \\
            \frac{\nu^2 \overF^{1-\alpha} }{\p{K_1+K_2\p{\frac{\nu}{K_1}}^{\alpha/(1-\alpha)}}(1-\nu)} \ifasmall
        \end{cases} \label{popov half step bound} 
    \end{align}
    where in the second to last line we used that $C_\nu<1$. 
    
    Now that we have upper bounded both $\|F(\xhat_{k-1})\|$ and $\|x_k-\xhat_{k-1}\|$, we can lower bound our stepsizes.
    
    For $\alpha=1$, we have
    \begin{align*}
        \gamma_k&=\frac{\nu}{(L_0+L_1\|F(\xhat_{k-1})\|)\exp(L_1\|x_k-\xhat_{k-1}\|)} \\
        &\geqfrom{\ref{popov half step bound}} \frac{\nu}{(L_0+L_1\|F(\xhat_{k-1})\|)\exp\p{\frac{\nu^2 e^\nu}{1-\nu e^\nu}}} \\
        &\geqfrom{\ref{popov final f bound a1}} \resizebox{0.9\linewidth}{!}{$\frac{\nu}{(L_0+L_1(\sqrt{2}\|x_0-x^*\|(L_0+L_1\|F(x^*)\|)\exp(\sqrt{2}L_1\|x_0-x^*\|) + \|F(x^*)\|))\exp\p{\frac{\nu^2 e^\nu}{1-\nu e^\nu}}}$}
    \end{align*}
    While for $\alpha \in (0,1)$, we find 
    \begin{align*}
        \gamma_k &=\frac{\nu}{K_0+K_1\|F(\xhat_{k-1})\|^\alpha + K_2(\|x_k-\xhat_{k-1}\|+\frac{\nu}{K_1}\|F(\xhat_{k-1})\|^{1-\alpha})^\frac{\alpha}{1-\alpha}} \\
        &\geqfrom{\ref{popov final f bound asmall}} \frac{\nu}{K_0+K_1\overF^\alpha + K_2\p{\|x_k-\xhat_{k-1}\|+\frac{\nu}{K_1}\overF^{1-\alpha}}^\frac{\alpha}{1-\alpha}}\\
        &\geqfrom{\ref{popov half step bound}} \frac{\nu}{K_0+\p{K_1 + K_2\p{\frac{\nu^2}{\p{K_1+K_2\p{\frac{\nu}{K_1}}^{\alpha/(1-\alpha)}}(1-\nu)}+\frac{\nu}{K_1}}^\frac{\alpha}{1-\alpha}}\overF^{\alpha}}
    \end{align*}
    
    \end{proof}

\newpage

\section{Proofs of Convergence Results}\label{app:convergence}

In this section, we provide the proofs for our main convergence theorems from Section \ref{sec:convergence analysis}. 
Before treating the individual cases of strongly monotone and monotone operators, we provide a general descent lemma for each of \eg and \peg that can be used in either setting. We first provide the descent lemma for \eg.

    \begin{lemma}\label{lem: eg descent}
          Let $F$ be $\alpha$-symmetric $(L_0,L_1)$-Lipschitz and $\mu$-strongly monotone. Then iterates of \eg with stepsizes given by (\ref{EG Stepsizes}) with $\nu<1$ satisfy
          \begin{equation}
              \|x_{k+1}-y\|^2 \leqfrom{\ref{Young-like}} \p{1-\frac{2\gamma_k\mu}{1+\beta}}\|x_k-y\|^2-\p{1-C_\nu^2- \frac{2\gamma_k\mu}{\beta}}\|x_k-\xhatk\|^2 - 2\gamma_k\inprd{F(y)}{\hat{x}_k-y}   \label{EG predescent}
          \end{equation}
              where $y\in U$, $\beta>0$, and $C_\nu =\begin{cases}
            \nu e^\nu \text{ if $\alpha=1$} \\
            \nu \text{ if $\alpha \in (0,1)$}
        \end{cases}$.
    \end{lemma}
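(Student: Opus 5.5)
The plan is to begin from the unified descent inequality of Lemma~\ref{lem:unified predescent} with $\tilde{x}_k=x_k$. This gives, for any $y\in U$,
\[
\|x_{k+1}-y\|^2 \leq \|x_k-y\|^2 - \|x_{k+1}-\xhatk\|^2-\|\xhatk -x_k\|^2 + 2\gamma_k\|F(\xhatk)-F(x_k)\|\|x_{k+1}-\xhatk\| - 2\gamma_k\inprd{F(\xhatk)}{\xhatk - y}.
\]

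The first step handles the cross term. By the \eg case of Lemma~\ref{lem:lipschitz analogue}, namely \eqref{EG f bound}, we have $\gamma_k\|F(\xhatk)-F(x_k)\|\leq C_\nu\|x_k-\xhatk\|$. Applying \eqref{trivial ineq} then yields
\[
2\gamma_k\|F(\xhatk)-F(x_k)\|\|x_{k+1}-\xhatk\|\leq C_\nu^2\|x_k-\xhatk\|^2+\|x_{k+1}-\xhatk\|^2 .
\]
The $\|x_{k+1}-\xhatk\|^2$ terms cancel, leaving $-(1-C_\nu^2)\|x_k-\xhatk\|^2$.

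The second step treats the inner product. I split it as $\inprd{F(\xhatk)}{\xhatk-y}=\inprd{F(\xhatk)-F(y)}{\xhatk-y}+\inprd{F(y)}{\xhatk-y}$. Strong monotonicity \eqref{muSC} gives
\[
-2\gamma_k\inprd{F(\xhatk)}{\xhatk-y}\leq -2\gamma_k\mu\|\xhatk-y\|^2-2\gamma_k\inprd{F(y)}{\xhatk-y}.
\]

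The third step converts $-\|\xhatk-y\|^2$ into terms of $\|x_k-y\|^2$ and $\|x_k-\xhatk\|^2$. For this I use the Young-like inequality \eqref{Young-like} with $a=\xhatk-y$ and $b=x_k-\xhatk$, so that $a+b=x_k-y$:
\[
-\|\xhatk-y\|^2\leq -\tfrac{1}{1+\beta}\|x_k-y\|^2+\tfrac1\beta\|x_k-\xhatk\|^2.
\]
Multiplying by $2\gamma_k\mu\ge0$ and collecting coefficients gives exactly \eqref{EG predescent}.

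No step is a real obstacle; the argument is routine bookkeeping. The only care needed is the choice of $a,b$ in \eqref{Young-like}, which must be made so that the $\|x_k-y\|^2$ coefficient becomes $1-\tfrac{2\gamma_k\mu}{1+\beta}$. The case $\mu=0$ (the monotone case) is covered by the same chain of inequalities.
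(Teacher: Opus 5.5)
Your proposal is correct and follows the paper's proof essentially step for step. You use Lemma~\ref{lem:unified predescent} with $\tilde{x}_k=x_k$, absorb the cross term via \eqref{trivial ineq} and \eqref{EG f bound}, split off $F(y)$ and apply strong monotonicity, then use \eqref{Young-like} with $a=\xhatk-y$, $b=x_k-\xhatk$. The only cosmetic difference is order: the paper applies $2ab\le a^2+b^2$ first, producing $\gamma_k^2\|F(\xhatk)-F(x_k)\|^2$, and then invokes \eqref{EG f bound}, whereas you bound the operator difference first; the resulting inequality is identical.
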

    \begin{proof}
    First, from Lemma~\ref{lem:unified predescent}, we have
        \begin{align}
            \|x_{k+1}-y\|^2 &\leqfrom{\ref{eg popov descent}} \|x_k-y\|^2 - \|x_{k+1}-\xhatk\|^2-\|\xhatk -x_k\|^2 \notag\\
            &\qquad + 2\gamma_k\|F(\xhatk)-F(x_k)\|\|x_{k+1}-\xhatk\| - 2\gamma_k\inprd{F(\xhatk)}{\xhatk - y} \notag\\
            &\leqfrom{\ref{trivial ineq}} \|x_k-y\|^2 -\|\xhatk -x_k\|^2  + \gamma_k^2\|F(\xhatk)-F(x_k)\|^2 - 2\gamma_k\inprd{F(\xhatk)}{\xhatk - y} \label{vankov eg lemma}
        \end{align}
        Up to here, we have reproduced standard results for \eg for completeness (see \cite[Lemma 5.1]{pmlr-v235-vankov24a}).
    
        We now use our lemmas specific to the $\mu$-strongly monotone $(L_0, L_1)$-Lipschitz setting to get our final descent lemma. For our proofs in the monotone setting, we will simply take $\mu=0$.
        \begin{align*}
            \|x_{k+1}&-y\|^2 \leqfrom{\ref{vankov eg lemma}} \|x_k-y\|^2-\|x_k-\hat{x}_k\|^2\\
            &\qquad-2\gamma_k\inprd{F(\hat{x}_k)}{\hat{x}_k-y}+\gamma_k^2\|F(x_k)-F(\hat{x}_k)\|^2 \notag\\
            &\leqfrom{\ref{EG f bound}} \|x_k-y\|^2-(1-C_\nu^2)\|x_k-\hat{x}_k\|^2-2\gamma_k\inprd{F(\hat{x}_k)}{\hat{x}_k-y} \\
            &= \|x_k-y\|^2-(1-C_\nu^2)\|x_k-\hat{x}_k\|^2 \\
            &\qquad-2\gamma_k\inprd{F(\hat{x}_k) - F(y)}{\hat{x}_k-y} - 2\gamma_k\inprd{F(y)}{\hat{x}_k-y} \\
            &\leqfrom{\ref{muSC}} \|x_k-y\|^2-(1-C_\nu^2)\|x_k-\xhatk\|^2 -2\gamma_k\mu\|\xhatk-y\|^2  - 2\gamma_k\inprd{F(y)}{\hat{x}_k-y}
        \end{align*}

        We conclude the proof by applying Young's Lemma, so that for any $\beta>0$,
        \begin{align*}
            \|x_{k+1}&-y\|^2 \leqfrom{\ref{Young-like}} \p{1-\frac{2\gamma_k\mu}{1+\beta}}\|x_k-y\|^2-\p{1-C_\nu^2- \frac{2\gamma_k\mu}{\beta}}\|x_k-\xhatk\|^2 - 2\gamma_k\inprd{F(y)}{\hat{x}_k-y} 
        \end{align*}  
    \end{proof}

    \begin{lemma}
        Let $F$ be $\alpha$-symmetric $(L_0,L_1)$-Lipschitz and $\mu$-strongly monotone. Then iterates of \peg with the stepsizes given by (\ref{Popov stepsizes}) satisfy
    \begin{equation}
    \begin{aligned}
        R_{k+1}(y)&\leq \p{1-\frac{2\gamma_k\mu}{1+\beta}}R_k(y)-\p{1-4C_\nu^2-\frac{2\gamma_k\mu}{\beta}}\|x_k-\xhatk\|^2 \notag\\
        &\qquad+\p{4C_\nu^2-1+\frac{2\gamma_k\mu}{1+\beta}}\|x_k-\xhat_{k-1}\|^2  -2\gamma_k\inprd{F(y)}{\xhatk-y}
    \end{aligned}
         \label{popov predescent}
    \end{equation}
      where $y\in U$, $\beta>0$, and $R_k(y)=\|x_k-y\|^2+\|x_k-\xhat_{k-1}\|^2$.
    \end{lemma}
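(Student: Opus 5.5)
The plan is to start from the unified inequality of Lemma~\ref{lem:unified predescent} with $\tilde{x}_k=\xhat_{k-1}$, and to keep the term $\|x_{k+1}-\xhatk\|^2$ on the left-hand side instead of discarding it. That term becomes part of $R_{k+1}(y)$, since $R_{k+1}(y)=\|x_{k+1}-y\|^2+\|x_{k+1}-\xhatk\|^2$. Moving it to the left in \eqref{eg popov descent} gives
\begin{equation*}
R_{k+1}(y)\le \|x_k-y\|^2-\|\xhatk-x_k\|^2+2\gamma_k\|F(\xhatk)-F(\xhat_{k-1})\|\|x_{k+1}-\xhatk\|-2\gamma_k\inprd{F(\xhatk)}{\xhatk-y}.
\end{equation*}

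The key step is controlling the cross term, and I would use the single-call structure for this. Both $x_{k+1}=P_U(x_k-\gamma_kF(\xhatk))$ and $\xhatk=P_U(x_k-\gamma_kF(\xhat_{k-1}))$ are projections of points that differ only in the operator evaluation. Nonexpansiveness \eqref{Nonexpansive} therefore gives $\|x_{k+1}-\xhatk\|\le\gamma_k\|F(\xhatk)-F(\xhat_{k-1})\|$. Hence the cross term is at most $2\gamma_k^2\|F(\xhatk)-F(\xhat_{k-1})\|^2$. By \eqref{popov f bound} this is at most $2C_\nu^2\|\xhatk-\xhat_{k-1}\|^2$. Young's inequality \eqref{young ineq} then gives the bound $4C_\nu^2\bigl(\|x_k-\xhatk\|^2+\|x_k-\xhat_{k-1}\|^2\bigr)$. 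This is exactly where the constant $4C_\nu^2$ in the statement comes from. I expect this to be the only delicate point: the factor $2$ must be absorbed correctly, and one must not first apply \eqref{trivial ineq}, which would spend the $\|x_{k+1}-\xhatk\|^2$ term needed for $R_{k+1}$.

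Next I would handle the linear term. Split $\inprd{F(\xhatk)}{\xhatk-y}=\inprd{F(\xhatk)-F(y)}{\xhatk-y}+\inprd{F(y)}{\xhatk-y}$ and apply \eqref{muSC} to get $-2\gamma_k\mu\|\xhatk-y\|^2-2\gamma_k\inprd{F(y)}{\xhatk-y}$. Applying \eqref{Young-like} with $a=\xhatk-y$ and $b=x_k-\xhatk$ yields
\begin{equation*}
-\|\xhatk-y\|^2\le-\tfrac{1}{1+\beta}\|x_k-y\|^2+\tfrac1\beta\|x_k-\xhatk\|^2.
\end{equation*}
Combining the pieces, the coefficient of $\|x_k-y\|^2$ is $1-\tfrac{2\gamma_k\mu}{1+\beta}$ and that of $\|x_k-\xhatk\|^2$ is $-(1-4C_\nu^2-\tfrac{2\gamma_k\mu}{\beta})$. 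The remaining term is $4C_\nu^2\|x_k-\xhat_{k-1}\|^2$.

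Finally, I would rewrite $\|x_k-y\|^2=R_k(y)-\|x_k-\xhat_{k-1}\|^2$ inside the term $\bigl(1-\tfrac{2\gamma_k\mu}{1+\beta}\bigr)\|x_k-y\|^2$. This produces the factor in front of $R_k(y)$ and moves $-(1-\tfrac{2\gamma_k\mu}{1+\beta})\|x_k-\xhat_{k-1}\|^2$ onto that term. Added to $4C_\nu^2\|x_k-\xhat_{k-1}\|^2$, it gives exactly the coefficient $4C_\nu^2-1+\tfrac{2\gamma_k\mu}{1+\beta}$ claimed in \eqref{popov predescent}. As in the \eg case, the monotone version follows by setting $\mu=0$.
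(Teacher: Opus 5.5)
Your proposal is correct and follows the paper's proof essentially step for step. You keep $\|x_{k+1}-\xhatk\|^2$ to form $R_{k+1}(y)$, bound the cross term by nonexpansiveness as $2\gamma_k^2\|F(\xhatk)-F(\xhat_{k-1})\|^2$, then apply \eqref{popov f bound}, \eqref{young ineq}, strong monotonicity and \eqref{Young-like}, and finally regroup $\|x_k-y\|^2$ into $R_k(y)$, with coefficient bookkeeping that matches the stated inequality exactly.
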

    \begin{proof}
        Again starting with Lemma~\ref{lem:unified predescent}, we have,
           \begin{align}
            \|x_{k+1}&-y\|^2 \leqfrom{\ref{eg popov descent}} \|x_k-y\|^2 - \|x_{k+1}-\xhatk\|^2-\|\xhatk -x_k\|^2 \notag\\
            &\qquad + 2\gamma_k\|F(\xhatk)-F(\xhat_{k-1})\|\|x_{k+1}-\xhatk\| - 2\gamma_k\inprd{F(\xhatk)}{\xhatk - y} \notag\\
            &= \|x_k-y\|^2 - \|x_{k+1}-\xhatk\|^2-\|\xhatk -x_k\|^2 - 2\gamma_k\inprd{F(\xhatk)}{\xhatk - y}\notag\\
            &\qquad + 2\gamma_k\|F(\xhatk)-F(\xhat_{k-1})\|\|P_U(x_k-\gamma_k F(\xhatk)) - P_U(x_k-\gamma_k F(\xhat_{k-1}))\| \notag\\
            &\leqfrom{\ref{Nonexpansive}} \|x_k-y\|^2 -\|\xhatk - x_k\|^2   - \|x_{k+1}-\xhatk\|^2 \notag\\
            &\qquad- 2\gamma_k\inprd{F(\xhatk)}{\xhatk - y} + 2\gamma_k^2\|F(\xhatk)-F(\xhat_{k-1})\|^2 \label{popov lemma}
        \end{align}
        At this point, we have recovered \cite[Lemma 6.1]{pmlr-v235-vankov24a} for completeness. Rearranging and defining $R_k(y)=\|x_k-y\|^2+\|x_k-\xhat_{k-1}\|^2$, we have
        \begin{align}
        R_{k+1}(y) &\leq \|x_k-y\|^2 - \|x_k-\xhatk\|^2-2\gamma_k\inprd{F(\xhatk)}{\xhatk-y}+2\gamma_k^2\|F(\xhatk)-F(\xhat_{k-1})\|^2 \notag \\
        &\leqfrom{\ref{popov f bound}}  \|x_k-y\|^2 - \|x_k-\xhatk\|^2-2\gamma_k\inprd{F(\xhatk)}{\xhatk-y} +2 C_\nu^2\|\xhatk-\xhat_{k-1}\|^2\notag \\
        &=\|x_k-y\|^2 - \|x_k-\xhatk\|^2-2\gamma_k\inprd{F(\xhatk)}{\xhatk-y} +2 C_\nu^2\|\xhatk-x_k+x_k-\xhat_{k-1}\|^2 \notag\\
        &\leq \|x_k-y\|^2-(1-4C_\nu^2)\|x_k-\xhatk\|^2+4C_\nu^2\|x_k-\xhat_{k-1}\|^2 -2\gamma_k\inprd{F(\xhatk)}{\xhatk-y} \notag\\
        &=\|x_k-y\|^2-(1-4C_\nu^2)\|x_k-\xhatk\|^2+4C_\nu^2\|x_k-\xhat_{k-1}\|^2 \notag\\ &\qquad-2\gamma_k\inprd{F(\xhatk)-F(y)}{\xhatk-y} -2\gamma_k\inprd{F(y)}{\xhatk-y}\notag \\
        &\leqfrom{\ref{muSC}} \|x_k-y\|^2-(1-4C_\nu^2)\|x_k-\xhatk\|^2+4C_\nu^2\|x_k-\xhat_{k-1}\|^2 \notag \\
        &\qquad-2\gamma_k\mu\|\xhatk-y\|^2 -2\gamma_k\inprd{F(y)}{\xhatk-y} \notag 
    \end{align}

    Setting $\beta >0$, we now apply Young's Lemma and then rearrange terms to achieve our final result.
    \begin{align}
        R_{k+1}(y)&\leqfrom{\ref{Young-like}} \p{1-\frac{2\gamma_k\mu}{1+\beta}}\|x_k-y\|^2 - \p{1-4C_\nu^2 -\frac{2\gamma_k\mu}{\beta}}\|x_k-\xhatk\|^2 \notag \\
        &\qquad+ 4C_\nu^2\|x_k-\xhat_{k-1}\|^2 -2\gamma_k\inprd{F(y)}{\xhatk-y} \notag\\
        &= \p{1-\frac{2\gamma_k\mu}{1+\beta}}R_k(y)-\p{1-4C_\nu^2-\frac{2\gamma_k\mu}{\beta}}\|x_k-\xhatk\|^2 \notag\\
        &\qquad+\p{4C_\nu^2-1+\frac{2\gamma_k\mu}{1+\beta}}\|x_k-\xhat_{k-1}\|^2  -2\gamma_k\inprd{F(y)}{\xhatk-y} \notag 
    \end{align} 
    \end{proof}

\subsection{Proofs of Results for Monotone VIs}

\subsubsection{Proof of Theorem~\ref{thm: EG monotone}}
    \begin{theorem}
        Let $F$ be $\alpha$-symmetric $(L_0, L_1)$-Lipschitz and monotone. Additionally let $\xbark=\frac{\sum_{k=0}^{K-1}\gamma_k\xhatk}{\sum_{k=0}^{K-1}\gamma_k}$ and $x^*\in U^*$. Then the averaged iterates $\xbark$ of \eg with stepsizes given by (\ref{EG Stepsizes}) and $\nu$ chosen so that $C_\nu< 1$  satisfy
         \begin{equation*}
          \gap{\xbark}\leq \p{2+\frac{1}{\sqrt{1-C_\nu^2}}}^2\frac{\|x_0-x^*\|^2}{2\ugamma K}
        \end{equation*}
        where $\ugamma$ is defined in Lemma~\ref{lem:EG step lower bound} and $B=\{y\in U:\|y-x^*\|^2\leq  (1+(1-C_\nu^2)^{-\nicefrac{1}{2}})^2\|x_0-x^*\|^2\}$. Further, $\xbark\in B$.
    \end{theorem}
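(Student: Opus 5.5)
The plan is to apply the \eg descent inequality \eqref{EG predescent} with $\mu=0$ and let $\beta\to\infty$ (or simply set $\mu=0$). For every $y\in U$ this gives
\[
\|x_{k+1}-y\|^2\le \|x_k-y\|^2-(1-C_\nu^2)\|x_k-\xhatk\|^2-2\gamma_k\inprd{F(y)}{\xhatk-y}.
\]
I will use this inequality twice, once with $y=x^*$ and once with a general $y\in B$.

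\textbf{Step 1 (boundedness).} With $y=x^*$, the term $\inprd{F(x^*)}{\xhatk-x^*}$ is nonnegative because $x^*$ solves \eqref{VIP Solution} and $\xhatk\in U$. Hence $\|x_{k+1}-x^*\|^2\le\|x_k-x^*\|^2-(1-C_\nu^2)\|x_k-\xhatk\|^2$. This yields $\|x_k-x^*\|\le D_0:=\|x_0-x^*\|$ for all $k$, which is exactly the hypothesis of Lemma~\ref{lem:EG step lower bound}, so $\gamma_k\ge\ugamma$. It also gives $\|x_k-\xhatk\|\le D_0/\sqrt{1-C_\nu^2}$, so $\|\xhatk-x^*\|\le(1+(1-C_\nu^2)^{-1/2})D_0$. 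Thus every $\xhatk$ lies in $B$. Since $B$ is convex, being a ball intersected with $U$, the weighted average $\xbark$ lies in $B$ as well, which proves the last claim.

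\textbf{Step 2 (gap bound).} For a general $y\in B$, I sum the inequality over $k=0,\dots,K-1$. The terms telescope, and dropping the nonpositive terms gives
\[
2\sum_{k=0}^{K-1}\gamma_k\inprd{F(y)}{\xhatk-y}\le\|x_0-y\|^2 .
\]
Taking the supremum over $y\in B$ and applying \eqref{Gap lemma}, which is valid because $\xbark\in B$, yields $2K\ugamma\,\gap{\xbark}\le\sup_{y\in B}\|x_0-y\|^2$. By the triangle inequality,
\[
\|x_0-y\|\le\|x_0-x^*\|+\|y-x^*\|\le\bigl(2+(1-C_\nu^2)^{-1/2}\bigr)D_0 ,
\]
and squaring gives the stated constant.

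\textbf{Main obstacle.} The delicate point is the order of the argument. Step 1 must come first, because it delivers both the stepsize lower bound and $\xbark\in B$. That membership is what makes $\gap{\xbark}\ge0$ in \eqref{Gap lemma}, and Step 2 depends on it. No monotonicity between $\xhatk$ and $y$ is needed beyond the substitution $F(\xhatk)\to F(y)$, which is already built into \eqref{EG predescent}.
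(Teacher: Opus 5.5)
Your proposal is correct and follows the paper's proof essentially step for step. The $y=x^*$ instance of the $\mu=0$ descent inequality gives bounded iterates, $\gamma_k\ge\underline{\gamma}$, and $\hat{x}_k\in B$; the telescoped general-$y$ instance combined with the gap lemma then gives the rate, and the only cosmetic difference is that you use the triangle inequality where the paper applies Young's inequality with optimized $\beta$ (first $\beta=\sqrt{1-C_\nu^2}$, then $\beta=r^{-1}$), which yields exactly the same constants.
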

    
    \begin{proof}
        Starting with (\ref{EG predescent}) with $\mu=0$ from monotonicity, we have
        \begin{align}
              \|x_{k+1}-y\|^2 &\leq \|x_k-y\|^2-(1-C_\nu^2)\|x_k-\xhatk\|^2 - 2\gamma_k\inprd{F(y)}{\hat{x}_k-y} \label{EG monotone predescent}
        \end{align}
        We use this inequality twice. First, taking $y=x^*$, we have
        \begin{align}
            \|x_{k+1}-x^*\|^2 &\leqfrom{\ref{VIP Solution}}  \|x_k-x^*\|^2-(1-C_\nu^2)\|x_k-\xhatk\|^2 \label{eg pre half bound}\\
            &\leq \|x_0-x^*\|^2 \label{eg bounded iterates}
        \end{align}
      where the last inequality comes from our choice of $\nu$. This indicates that our iterates are bounded and allows us to apply (\ref{EG step lower bound}). Note also from rearranging (\ref{eg pre half bound}), we have
      \begin{align}
         (1-C_\nu^2)\|x_k-\xhatk\|^2  &\leq \|x_k-x^*\|^2-\|x_{k+1}-x^*\|^2 \notag\\
         &\leqfrom{\ref{eg bounded iterates}} \| x_0-x^*\|^2 \notag \\
         \Rightarrow \|x_k-\xhatk\|^2&\leq  \frac{\|x_0-x^*\|^2}{1-C_\nu^2} \label{eg half bound}
      \end{align}
      From Young's Inequality and combining (\ref{eg bounded iterates}) and (\ref{eg half bound}), we thus have for any $\beta> 0$
      \begin{align*}
          \|\xhatk-x^*\|^2 &\leqfrom{\ref{full young}} (1+\beta^{-1})\|x_k-x^*\|^2+(1+\beta)\|x_k-\xhatk\|^2 \\  
          &\leqfrom{\ref{eg half bound}} \p{1+\frac{1}{1-C_\nu^2}+\frac{1}{\beta}+\frac{\beta}{1-C_\nu^2}}\|x_0-x^*\|^2 
          \end{align*}
          We choose $\beta=\sqrt{1-C_\nu^2}$ to minimize the upper bound, obtaining the following
          
          \begin{align*}
          \|\xhatk-x^*\|^2&\leq \p{1+\frac{1}{\sqrt{1-C_\nu^2}}}^2\|x_0-x^*\|^2
      \end{align*}
      
      For convenience, let $r=1+\frac{1}{\sqrt{1-C_\nu^2}}$ and $B$ be the set $\set{y\in U:\|y-x^*\|^2\leq r^2\|x_0-x^*\|^2}$. Note that since $\xbark$ is a convex combination of $\xhat_i\in B$ for $i\in \set{0,1,...,K-1}$, $\xbark\in B$.
        
        Now returning to the case of general $y$, we rearrange and telescope to obtain
        \begin{align*}
          2\gamma_k  \inprd{F(y)}{\xhatk-y} &\leq \|x_k-y\|^2-\|x_{k+1}-y\|^2 \\
            \Rightarrow 2\sum_{k=0}^{K-1} \gamma_k\inprd{F(y)}{\xhatk-y} &\leq \|x_{0}-y\|^2- \|x_{K}-y\|^2  \\
            &\leq \|x_0-y\|^2
        \end{align*}
       
        We take $\supg$ over both sides and applying (\ref{Gap lemma}) to obtain
        \begin{align}
              \supg\|x_{0}-y\|^2 &\geq 2\supg\sum_{k=0}^{K-1} \gamma_k\inprd{F(y)}{\xhatk-y} \notag\\
              &\geqfrom{\ref{Gap lemma}}  2\ugamma K\gap{\xbark}\label{eg gap setup}
        \end{align}
All that remains is to upper bound $ \supg\|x_{0}-y\|^2$. To do this, we again apply Young's inequality with some $\beta>0$:
\begin{align*}
    \supg\|x_0-y\|^2 &\leqfrom{\ref{full young}} \supg\p{(1+\beta^{-1})\|x_0-x^*\|^2+ (1+\beta)\|x^*-y\|^2} \\
    &\leq \p{1+\beta^{-1}}\|x_0-x^*\|^2+(1+\beta)r^2\|x_0-x^*\|^2 \\
    &=(1+r^2+\beta^{-1}+\beta r^2)\|x_0-x^*\|^2
\end{align*}
        We can readily find that the value for $\beta$ that minimizes this expression is $\beta=r^{-1}$, which gives us
        \begin{equation}
            \supg \|x_0-y\|^2\leq \p{1+r}^2\|x_0-x^*\|^2\label{eg sup bound}
        \end{equation}
       Rearranging (\ref{eg gap setup}) and substituting in (\ref{eg sup bound}), we complete the proof:
        \begin{align*}
            \gap{\xbark} &\leqfrom{\ref{eg gap setup}} \frac{\supg \|x_0-y\|^2}{2\ugamma K} \\
            &\leqfrom{\ref{eg sup bound}} \frac{(1+r)^2\|x_0-x^*\|^2}{2\ugamma K} \\
            &=\p{2+\frac{1}{\sqrt{1-C_\nu^2}}}^2\frac{\|x_0-x^*\|^2}{2\ugamma K}
        \end{align*}
    \end{proof}

    \subsubsection{Proof of Theorem~\ref{thm: popov monotone}}
   \begin{theorem}
        Let $F$ be $\alpha$-symmetric $(L_0,L_1)$-Lipschitz and monotone. Additionally, let $\xbark=\frac{\sum_{k=0}^{K-1}\gamma_k\xhatk}{\sum_{k=0}^{K-1}\gamma_k}$ and $x^*\in U^*$. Then the averaged iterates $\xbark$ of \peg with stepsizes given by (\ref{Popov stepsizes}) and $\nu$ chosen so that $C_\nu\leq\frac{1}{2}$ satisfy 
        \begin{equation*}
            \gap{\xbark}\leq \frac{(3+2\sqrt{2})\|x_0-x^*\|^2}{2\ugamma K}
        \end{equation*}
        where $\ugamma$ is defined in Lemma~\ref{lem:popov step lower bound} and $B=\set{y\in U:\|y-x^*\|^2\leq 2\|x_0-x^*\|^2}$. Further, $\xbark\in B$.
    \end{theorem}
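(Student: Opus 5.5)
The plan is to run the same template as the proof of Theorem~\ref{thm: EG monotone}, now with the \peg descent inequality \eqref{popov predescent}. Setting $\mu=0$ there gives, for every $y\in U$,
\begin{equation*}
R_{k+1}(y)\leq R_k(y)-(1-4C_\nu^2)\|x_k-\xhatk\|^2-(1-4C_\nu^2)\|x_k-\xhat_{k-1}\|^2-2\gamma_k\inprd{F(y)}{\xhatk-y}.
\end{equation*}
Since $C_\nu\leq\half$, both squared-norm coefficients are nonnegative and can be dropped, leaving
\begin{equation*}
R_{k+1}(y)\leq R_k(y)-2\gamma_k\inprd{F(y)}{\xhatk-y}.
\end{equation*}
Note that $C_\nu\leq\half<1$, so Lemma~\ref{lem:popov step lower bound} is applicable once boundedness is shown.

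First take $y=x^*$. By \eqref{VIP Solution} the inner product term is nonpositive, so $R_{k+1}\leq R_k\leq R_0$. Because $\xhat_{-1}=x_0$, we have $R_0=\|x_0-x^*\|^2$. This gives the bounded-iterates hypothesis of Lemma~\ref{lem:popov step lower bound}, hence $\gamma_k\geq\ugamma$.

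Next I would locate the $\xhatk$. I expect this step to be the main subtlety, because the descent inequality controls $x_{k+1}-\xhatk$ and not $x_k-\xhatk$ directly. The way around this is that $R_{k+1}=\|x_{k+1}-x^*\|^2+\|x_{k+1}-\xhatk\|^2\leq\|x_0-x^*\|^2$. Young's inequality \eqref{young ineq} then gives
\begin{equation*}
\|\xhatk-x^*\|^2\leq 2\|x_{k+1}-x^*\|^2+2\|x_{k+1}-\xhatk\|^2\leq 2\|x_0-x^*\|^2,
\end{equation*}
which is exactly the bound \eqref{popov half from sol} shifted by one index. So every $\xhatk\in B$ with $r^2=2$. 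Since $\xbark$ is a convex combination of these points and $B$ is convex, $\xbark\in B$.

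For general $y\in B$, telescope $2\sum_{k=0}^{K-1}\gamma_k\inprd{F(y)}{\xhatk-y}\leq R_0(y)-R_K(y)\leq R_0(y)=\|x_0-y\|^2$, again using $\xhat_{-1}=x_0$. Taking $\supg$ and applying \eqref{Gap lemma}, which is valid because $\xbark\in B$, gives $2\ugamma K\,\gap{\xbark}\leq\supg\|x_0-y\|^2$.

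It remains to bound the supremum. By the triangle inequality, or equivalently Young with the optimal $\beta=r^{-1}$ as in \eqref{eg sup bound}, we get $\|x_0-y\|\leq(1+\sqrt2)\|x_0-x^*\|$. Hence $\supg\|x_0-y\|^2\leq(1+\sqrt2)^2\|x_0-x^*\|^2=(3+2\sqrt2)\|x_0-x^*\|^2$. Dividing by $2\ugamma K$ yields the claimed bound.
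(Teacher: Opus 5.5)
Your proposal is correct and follows the paper's proof essentially step for step. The shared steps are the $\mu=0$ case of \eqref{popov predescent}; taking $y=x^*$ to get $R_k\le R_0$ and invoke Lemma~\ref{lem:popov step lower bound}; the bound $\|\xhatk-x^*\|^2\le 2R_{k+1}\le 2\|x_0-x^*\|^2$ (the paper cites this as \eqref{popov half from sol} without making the index shift explicit, as you do); telescoping; and \eqref{Gap lemma}. The only cosmetic difference is that you bound $\supg\|x_0-y\|^2$ via the triangle inequality, whereas the paper uses Young's inequality with $\beta=1/\sqrt{2}$; both give $(1+\sqrt{2})^2=3+2\sqrt{2}$.
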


    \begin{proof}
    From (\ref{popov predescent}) with $\mu=0$, we have
    \begin{align}
         R_{k+1}(y)\leq R_k(y)-(1-4C_\nu^2)\|x_k-\xhatk\|^2-(1-4C_\nu^2)\|x_k-\xhat_{k-1}\|^2 -2\gamma_k\inprd{F(y)}{\xhatk-y} \label{popov monotone predescent}
    \end{align}   
    Like with \eg, we use this last inequality twice. First, in order to lower bound $\gamma_k$, we choose $y=x^*$, obtaining \begin{align}
        R_{k+1}(x^*)&\leq R_k(x^*)-(1-4C_\nu^2)\|x_k-\xhatk\|^2-(1-4C_\nu^2)\|x_k-\xhat_{k-1}\|^2  \label{popov monotone descent} \\
        &\leq R_k(x^*) \notag\\
        &\leq R_0(x^*) \notag
    \end{align}
  where in the second step we used that $C_\nu\leq\half$. This result allows us to apply (\ref{popov step lower bound}). Note from here we can also obtain the following bound obtained as an intermediate step in Lemma~\ref{lem:popov step lower bound}:
  \begin{equation*}
       \|\xhat_{k-1} - x^*\|^2  \leqfrom{\ref{popov half from sol}} 2\|x_0-x^*\|^2
  \end{equation*}
 Defining $B=\set{y\in U:\|y-x^*\|^2\leq 2\|x_0-x^*\|^2}$, we have that all $\xhatk\in B$, ensuring that $\xbark\in B$.
    
    Now, rearranging (\ref{popov monotone predescent}), we have
    \begin{align*}
        \gamma_k\inprd{F(y)}{\xhatk-y}&\leq \frac{1}{2}\p{R_k(y)-R_{k+1}(y)} \\
        \Rightarrow \sum_{k=0}^{K-1} \gamma_k\inprd{F(y)}{\xhat_k-y}&\leq \frac{1}{2}\p{R_0(y)-R_{K}(y)}\\
        &\leq \frac{1}{2}\p{\|x_0-y\|^2 +\|x_0-\xhat_{-1}\|^2} \\
        &=\half\|x_0-y\|^2
    \end{align*}
    Taking the supremum of both sides and applying (\ref{Gap lemma}), we obtain
    \begin{align}
       \frac{1}{2}\supg\|x_0-y\|^2 &\geq  \supg \sum_{k=0}^{K-1} \gamma_k\inprd{F(y)}{\xhatk - y} \notag\\
     &\geqfrom{\ref{Gap lemma}}  \ugamma K \gap{\xbark} 
        \label{popov sup ineq 1}
    \end{align}
    where $\xbark =\frac{ \sum_{k=0}^{K-1}\xhatk\gamma_k}{ \sum_{k=0}^{K-1}\gamma_k}$. We now turn to upper bounding the remaining supremum on the left-hand side of the inequality by using Young's inequality:
    \begin{align*}
        \supg \|x_0-y\|^2 &\leqfrom{\ref{full young}} \supg\p{(1+\beta^{-1})\|x_0-x^*\|^2 + (1+\beta)\|y-x^*\|^2} \\
        &=(1+\beta^{-1})\|x_0-x^*\|^2+ 2(1+\beta)\|x_0-x^*\|^2 \\
        &=(3+\beta^{-1}+2\beta)\|x_0-x^*\|^2
    \end{align*}
    We choose $\beta=\nicefrac{1}{\sqrt{2}}$ to minimize our upper bound, finding that
    \begin{equation}
        \supg\|x_0-y\|^2\leq (3+2\sqrt{2})\|x_0-x^*\|^2\label{popov sup bound}
    \end{equation}
    Substituting into (\ref{popov sup ineq 1}) and dividing by $\ugamma K$, we conclude the proof with
    \begin{align*}
        \gap{\xbark} &\leqfrom{\ref{popov sup ineq 1}} \frac{\supg \|x_0-y\|^2}{2\ugamma K}\\
        &\leqfrom{\ref{popov sup bound}}\frac{(3+2\sqrt{2})\|x_0-x^*\|^2}{2\ugamma K}
    \end{align*}
    \end{proof}

\subsubsection{Proof of Corollary~\ref{cor: popov monotone}}
    \begin{corollary}
        Let $F$ be $1$-symmetric $(L_0,L_1)$-Lipschitz and monotone. In the unconstrained setting ($U=\R^d$), the iterates of \peg with stepsizes given by (\ref{Popov stepsizes}) and $\nu e^\nu=\frac{1}{2\sqrt{2}}$ ($\nu\approx 0.27$) satisfy
         \begin{align*}
            \min_{0\leq k\leq K} \|F(\xhat_{k-1})\|^2 &\leq \frac{2\|x_0-x^*\|^2}{\ugamma^2 (K+1)}
        \end{align*}
        Furthermore, for $K+1>\frac{2 L_1^2}{\nu^2}\exp\p{\frac{2\nu^2 e^\nu}{1-\nu e^\nu}}\|x_0-x^*\|^2$, 
         \begin{equation*}
            \mink \|F(\xhat_{k-1})\|\leq \frac{L_0 \sqrt{2}\exp\p{\frac{\nu^2 e^\nu}{1-\nu e^\nu}}\|x_0-x^*\|}{\nu\sqrt{K+1}-L_1\sqrt{2}\exp\p{\frac{\nu^2 e^\nu}{1-\nu e^\nu}}\|x_0-x^*\|}
        \end{equation*} 
    \end{corollary}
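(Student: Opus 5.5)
We start from the monotone one-step inequality \eqref{popov monotone descent} with $y=x^*$:
\[
R_{k+1}(x^*)\le R_k(x^*)-(1-4C_\nu^2)\bigl(\|x_k-\xhatk\|^2+\|x_k-\xhat_{k-1}\|^2\bigr).
\]
With $\nu e^\nu=\frac{1}{2\sqrt2}$ we have $C_\nu^2=\frac18$, so $1-4C_\nu^2=\frac12$. The first step is to lower bound the bracket by a multiple of $\gamma_k^2\|F(\xhat_{k-1})\|^2$. In the unconstrained case the update gives $\xhatk=x_k-\gamma_kF(\xhat_{k-1})$, hence $\|x_k-\xhatk\|=\gamma_k\|F(\xhat_{k-1})\|$. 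Dropping the nonnegative term $\|x_k-\xhat_{k-1}\|^2$, we get
\[
\tfrac12\gamma_k^2\|F(\xhat_{k-1})\|^2\le R_k-R_{k+1}.
\]
Summing over $k=0,\dots,K$ and using $R_0=\|x_0-x^*\|^2$ (since $\xhat_{-1}=x_0$) gives
\[
\sum_{k=0}^K\gamma_k^2\|F(\xhat_{k-1})\|^2\le 2\|x_0-x^*\|^2.
\]
Because $R_k\le R_0$, Lemma~\ref{lem:popov step lower bound} applies (note $C_\nu<1$), so $\gamma_k\ge\ugamma$. Bounding the sum below by $(K+1)\ugamma^2\min_k\|F(\xhat_{k-1})\|^2$ yields the first claim.

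For the second claim we avoid the crude $\ugamma$, which carries $\exp(\sqrt2 L_1D_0)$. Instead we use the exact form of $\gamma_k$ together with only the bound \eqref{popov half step bound} on $d_k=\|x_k-\xhat_{k-1}\|$, namely $L_1d_k\le \frac{\nu^2e^\nu}{1-\nu e^\nu}=:c$. This gives
\[
\gamma_k\ge \frac{\nu e^{-c}}{L_0+L_1\|F(\xhat_{k-1})\|}.
\]
Write $g_k=\|F(\xhat_{k-1})\|$. Then $\gamma_k g_k\ge \frac{\nu e^{-c}g_k}{L_0+L_1g_k}$, and this is increasing in $g_k$.

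Take $k^*$ minimizing $\gamma_kg_k$ over $0\le k\le K$. From the summed inequality, $(K+1)\gamma_{k^*}^2g_{k^*}^2\le 2\|x_0-x^*\|^2$, so
\[
\frac{\nu e^{-c}\,g_{k^*}}{L_0+L_1g_{k^*}}\le\frac{\sqrt2\|x_0-x^*\|}{\sqrt{K+1}}.
\]
This is linear in $g_{k^*}$ after clearing the denominator:
\[
g_{k^*}\bigl(\nu\sqrt{K+1}-\sqrt2L_1e^{c}\|x_0-x^*\|\bigr)\le \sqrt2L_0e^{c}\|x_0-x^*\|.
\]
The coefficient on the left is positive exactly when $K+1>\frac{2L_1^2}{\nu^2}e^{2c}\|x_0-x^*\|^2$, which is the stated condition, and dividing by it gives the claimed bound. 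Since $\min_k g_k\le g_{k^*}$, this finishes the argument.

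The main point to check is that \eqref{popov half step bound} for $\alpha=1$ (namely $\|x_k-\xhat_{k-1}\|\le \frac{\nu^2e^\nu}{L_1(1-\nu e^\nu)}$) holds independently of the iterate-boundedness constants. It does: its proof used only Lemma~\ref{lem:lipschitz analogue}, $\gamma_{k-1}\|F(\xhat_{k-2})\|\le \nu/L_1$, and $C_\nu<1$. It is also worth noting that choosing $k^*$ as the minimizer of $\gamma_kg_k$, rather than of $g_k$, is what makes the monotonicity argument go through.
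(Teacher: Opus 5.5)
Your proposal is correct and follows the paper's proof essentially step for step: the same descent inequality with $1-4C_\nu^2=\frac{1}{2}$, the unconstrained identity $\|x_k-\xhatk\|=\gamma_k\|F(\xhat_{k-1})\|$, telescoping to $\sum_{k=0}^{K}\gamma_k^2\|F(\xhat_{k-1})\|^2\le 2\|x_0-x^*\|^2$, and, for the second bound, using the exact stepsize together with only \eqref{popov half step bound} before clearing the denominator. The only difference is cosmetic: you fix the index minimizing $\gamma_k\|F(\xhat_{k-1})\|$, while the paper fixes the index minimizing the lower bound $\nu e^{-c}\|F(\xhat_{k-1})\|/(L_0+L_1\|F(\xhat_{k-1})\|)$, and neither version actually uses the monotonicity of $g\mapsto g/(L_0+L_1g)$ mentioned in your final remark.
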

    \begin{proof}
        From (\ref{popov monotone predescent}), we have
        \begin{align*}
            R_{k+1}(x^*) &\leq R_k(x^*)-(1-4\nu^2e^{2\nu})(\|x_k-\xhatk\|^2+\|x_k-\xhat_{k-1}\|^2) -2\gamma_k\inprd{F(x^*)}{\xhatk-x^*} \\
            &= R_k(x^*)-(1-4\nu^2e^{2\nu})(\gamma_k^2\|F(\xhat_{k-1})\|^2+\|x_k-\xhat_{k-1}\|^2)
        \end{align*}
        We choose $\nu e^\nu = \frac{1}{2\sqrt{2}}$ ($\nu\approx 0.27$) to obtain
        \begin{align*}
            R_{k+1}(x^*)&\leq R_k(x^*)-\frac{1}{2}(\gamma_k^2\|F(\xhat_{k-1})\|^2+\|x_k-\xhat_{k-1}\|^2) \\
            &\leq R_k(x^*)-\frac{1}{2}\gamma_k^2\|F(\xhat_{k-1})\|^2
        \end{align*}
        From here, we have that $\gamma_k$ is lower bounded using (\ref{popov step lower bound}). We continue by rearranging and telescoping to obtain
        \begin{align*}
            \sum_{k=0}^K\gamma_k^2\|F(\xhat_{k-1})\|^2 &\leq 2\p{R_0(x^*)-R_{K+1}(x^*)} \\
            &\leq 2R_0(x^*) \\
            &= 2\|x_0-x^*\|^2
        \end{align*}
       Taking the minimum in our summand, we have that
       \begin{equation}
           (K+1)\mink \gamma_k^2\|F(\xhat_{k-1})\|^2 \leq 2\|x_0-x^*\|^2 \label{Popov min gF bound}
       \end{equation}
        
        We can then readily obtain best iterate convergence using $\ugamma\leq\gamma_k$
        \begin{align*}
            \min_{0\leq k\leq K} \|F(\xhat_{k-1})\|^2 &\leq \frac{2\|x_0-x^*\|^2}{\ugamma^2 (K+1)}
        \end{align*}
        giving us our first convergence result. 
    
        Alternatively, we do not use our lower bound for $\gamma_k$. Then we can note that from (\ref{Popov min gF bound})
    \begin{align*}
         \frac{2\|x_0-x^*\|^2}{K+1} &\geq   \mink \gamma_k^2\|F(\xhat_{k-1})\|^2 \\
            &=\mink \frac{\nu^2\|F(\xhat_{k-1})\|^2}{(L_0+L_1\|F(\xhat_{k-1})\|)^2\exp(2L_1\|x_k-\xhat_{k-1}\|)} \\
            &\geqfrom{\ref{popov half step bound}} \mink \frac{\nu^2\|F(\xhat_{k-1})\|^2}{(L_0+L_1\|F(\xhat_{k-1})\|)^2\exp\p{\frac{2\nu^2 e^\nu}{1-\nu e^\nu}}}\\
        \end{align*}
        Taking the square roots of both sides, we obtain
        \begin{equation*}
            \mink \frac{\nu\|F(\xhat_{k-1})\|}{(L_0+L_1\|F(\xhat_{k-1})\|)\exp\p{\frac{\nu^2 e^\nu}{1-\nu e^\nu}}}\leq \frac{\sqrt{2}\|x_0-x^*\|}{\sqrt{K+1}}
        \end{equation*}
        Then there exists some $0\leq k_0\leq K$ where
        \begin{equation*}
            \frac{\|F(\xhat_{k_0-1})\|}{L_0+L_1\|F(\xhat_{k_0-1})\|}\leq \frac{\sqrt{2}\exp\p{\frac{\nu^2 e^\nu}{1-\nu e^\nu}}\|x_0-x^*\|}{\nu\sqrt{K+1}}
        \end{equation*}
        Rearranging, we obtain
        \begin{align*}
            \p{\nu\sqrt{K+1}-L_1\sqrt{2}\exp\p{\frac{\nu^2 e^\nu}{1-\nu e^\nu}}\|x_0-x^*\|}\|F(\xhat_{k_0-1})\|\\
            \leq L_0 \sqrt{2}\exp\p{\frac{\nu^2 e^\nu}{1-\nu e^\nu}}\|x_0-x^*\|
        \end{align*}
        Thus, whenever $K+1>\frac{2 L_1^2}{\nu^2}\exp\p{\frac{2\nu^2 e^\nu}{1-\nu e^\nu}}\|x_0-x^*\|^2$, we can rearrange the terms a final time to find
        \begin{equation*}
            \mink \|F(\xhat_{k-1})\|\leq \frac{L_0 \sqrt{2}\exp\p{\frac{\nu^2 e^\nu}{1-\nu e^\nu}}\|x_0-x^*\|}{\nu\sqrt{K+1}-L_1\sqrt{2}\exp\p{\frac{\nu^2 e^\nu}{1-\nu e^\nu}}\|x_0-x^*\|}
        \end{equation*} 
    \end{proof}

    \subsection{Proofs of Results for Strongly Monotone VIs}
    \subsubsection{Proof of Theorem~\ref{thm: EG sm}}
    \begin{theorem}
        Let $F$ be $\alpha$-symmetric $(L_0, L_1)$-Lipschitz and $\mu$-strongly monotone. Then the iterates of \eg with stepsizes given by (\ref{EG Stepsizes}) and $\nu$ chosen so that $1-2\nu-\nu^2e^{2\nu}\geq0\ifaone$ and $1-\nu-\nu^2\geq0 \ifasmall$ satisfy
        \begin{equation*}
            \|x_k-x^*\|^2\leq (1-\ugamma\mu)^k \|x_0-x^*\|^2
        \end{equation*}
        where $\ugamma$ is defined in Lemma~\ref{lem:EG step lower bound}.
    \end{theorem}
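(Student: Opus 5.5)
The plan is to start from the descent inequality \eqref{EG predescent} of Lemma~\ref{lem: eg descent} with $y=x^*$. The term $-2\gamma_k\inprd{F(x^*)}{\xhatk-x^*}$ is nonpositive by \eqref{VIP Solution}, since $\xhatk\in U$, so we may drop it. This leaves
\begin{equation*}
\|x_{k+1}-x^*\|^2\leq \p{1-\frac{2\gamma_k\mu}{1+\beta}}\|x_k-x^*\|^2-\p{1-C_\nu^2-\frac{2\gamma_k\mu}{\beta}}\|x_k-\xhatk\|^2 .
\end{equation*}
We fix $\beta=1$, so that the contraction factor becomes $1-\gamma_k\mu$. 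It then suffices to show that the coefficient $1-C_\nu^2-2\gamma_k\mu$ is nonnegative, after which that term can be discarded as well.

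The key step is to bound $\gamma_k\mu$ by a multiple of $\nu$ that does not involve $\mu$. For this we use \eqref{mu L0 L1} at $x=x_k$. When $\alpha=1$ we get $\mu\leq L_0+L_1\|F(x_k)\|$, hence $\gamma_k\mu\leq\nu$ directly from \eqref{EG Stepsizes}. When $\alpha\in(0,1)$ we get $\mu\leq K_0+K_1\|F(x_k)\|^\alpha$. The stepsize denominator is $2K_0+(2K_1+2^{1-\alpha}K_2^{1-\alpha})\|F(x_k)\|^\alpha\geq 2(K_0+K_1\|F(x_k)\|^\alpha)$, so $\gamma_k\mu\leq\nu/2$. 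Substituting these bounds, the coefficient is at least $1-\nu^2e^{2\nu}-2\nu$ when $\alpha=1$ and at least $1-\nu^2-\nu$ when $\alpha\in(0,1)$. Both are nonnegative by the hypotheses on $\nu$. These hypotheses also force $C_\nu<1$, which Lemma~\ref{lem: eg descent} requires.

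This yields $\|x_{k+1}-x^*\|^2\leq(1-\gamma_k\mu)\|x_k-x^*\|^2$. Since $\gamma_k\mu\leq1$, the factor lies in $[0,1)$. In particular $\|x_{k}-x^*\|\leq\|x_0-x^*\|$ for all $k$, which we prove by induction on $k$ using the same inequality. This boundedness is exactly the hypothesis of Lemma~\ref{lem:EG step lower bound}, so $\gamma_k\geq\ugamma$ and therefore $1-\gamma_k\mu\leq1-\ugamma\mu$. Unrolling the recursion gives the claimed bound $(1-\ugamma\mu)^k\|x_0-x^*\|^2$.

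The main obstacle is the $\mu$-free control of $\gamma_k\mu$ in the second step. The delicate part is the $\alpha\in(0,1)$ case, where the factor-of-2 slack in the stepsize denominator must absorb \eqref{mu L0 L1}. A secondary point of care is ordering the argument to avoid circularity: the descent and boundedness are established first, and only then is the stepsize lower bound invoked.
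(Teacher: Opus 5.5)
Your proposal is correct and follows essentially the same route as the paper's proof. Both arguments take $y=x^*$ and $\beta=1$ in Lemma~\ref{lem: eg descent}, use \eqref{mu L0 L1} to get the $\mu$-free bounds $\gamma_k\mu\le\nu$ (resp.\ $\nu/2$) so that the $\|x_k-\xhatk\|^2$ term can be dropped, deduce boundedness of the iterates first, and only then invoke Lemma~\ref{lem:EG step lower bound} to replace $1-\gamma_k\mu$ by $1-\ugamma\mu$ and unroll.
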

    \begin{proof}
        Using (\ref{EG predescent}) with $y=x^*$, we have
        \begin{align}
            \|x_{k+1}-x^*\|^2 &\leqfrom{\ref{EG predescent}} \p{1-\frac{2\gamma_k\mu}{1+\beta}}\|x_k-x^*\|^2-\p{1-C_\nu^2 -\frac{2\gamma_k\mu}{\beta}}\|x_k-\xhatk\|^2 \notag\\
            &\qquad- 2\gamma_k\inprd{F(x^*)}{\hat{x}_k-x^*}\notag \\
            &\leqfrom{\ref{VIP Solution}} \p{1-\frac{2\gamma_k\mu}{1+\beta}}\|x_k-x^*\|^2-\p{1-C_\nu^2 -\frac{2\gamma_k\mu}{\beta}}\|x_k-\xhatk\|^2\label{EG usm predescent}
        \end{align}

        We now choose $\beta=1$, finding
        \begin{equation}
            \|x_{k+1}-x^*\|^2\leq  \p{1-\gamma_k\mu }\|x_k-x^*\|^2 -(1-2\gamma_k\mu-C_\nu^2)\|x_k-\xhatk\|^2 \label{eg usm predescent beta1}
        \end{equation}
Now note that when $\alpha=1$, we have
        \begin{align}
            \gamma_k\mu &= \frac{\nu\mu}{L_0+L_1\|F(x_k)\|} \notag\\
            &\leqfrom{\ref{mu L0 L1}} \nu \label{eqn: eg usm gmu nu}
        \end{align}
        while when $\alpha \in (0,1)$
        \begin{align*}
            \gamma_k\mu &=\frac{\nu\mu}{2K_0+(2K_1+2^{1-\alpha}K_2^{1-\alpha})\|F(x_k)\|^\alpha} \\ 
            &\leq \frac{\nu\mu}{2(K_0+K_1\|F(x_k)\|^\alpha)} \\
            &\leqfrom{\ref{mu L0 L1}} \frac{\nu}{2}
        \end{align*}
        Using these inequalities in (\ref{EG usm predescent}) our choice of $\nu$ guarantees that $(1-2\gamma_k\mu-C_\nu^2)$ is positive. Using this and noting that $\gamma_k\mu<1$ from our choices of $\nu$, we have 
        \begin{align}
            \|x_{k+1} - x^*\|^2 &\leq (1-\gamma_k\mu)\|x_k-x^*\|^2 \label{eg descent gamma k}\\
            &\leq \|x_k-x^*\|^2 \notag \\
            &\leq \|x_0-x^*\|^2
            \end{align}
   Now that our iterates are bounded, we can apply Lemma~\ref{lem:EG step lower bound} to equation (\ref{eg descent gamma k}), giving us
            \begin{align*}
          \|x_{k+1} - x^*\|^2  &\leqfrom{\ref{EG step lower bound}} (1-\ugamma\mu)\|x_k-x^*\|^2
        \end{align*}
        which we can unroll to obtain our final result.
    \end{proof}

\subsubsection{Proof of Corollary~\ref{cor: eg usm}}
\begin{corollary}
      Let $F$ be $1$-symmetric $(L_0,L_1)$-Lipschitz and $\mu$-strongly monotone, and $U=\R^n$ (unconstrained case). Then the iterates of \eg with stepsizes given by (\ref{EG Stepsizes}) and $\nu$ chosen so that $\nu\approx0.21$ is the solution to the $1-4\nu-2\nu^2e^{2\nu}=0$ satisfy $\|x_{K+1}-x^*\|^2\leq\varepsilon$ after 
     \begin{equation*}
        K= \floor{\frac{8L_1^2}{\nu^2} \|x_0-x^*\|^2 }+ \ceil{\frac{2L_0}{\nu\mu}\log\frac{\|x_0-x^*\|^2}{\eps}}
    \end{equation*}
    iterates.
\end{corollary}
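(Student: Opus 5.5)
Since $U=\R^n$ and $x^*$ solves the VI, we have $F(x^*)=0$ and $x_k-\xhatk=\gamma_kF(x_k)$ exactly. The plan is a two-regime counting argument on the single descent inequality \eqref{eg usm predescent beta1}, obtained from Lemma~\ref{lem: eg descent} with $y=x^*$ and $\beta=1$:
\[
\|x_{k+1}-x^*\|^2\le (1-\gamma_k\mu)\|x_k-x^*\|^2-(1-2\gamma_k\mu-C_\nu^2)\gamma_k^2\|F(x_k)\|^2 .
\]
By \eqref{eqn: eg usm gmu nu} we have $\gamma_k\mu\le\nu$, so the bracket is at least $1-2\nu-\nu^2e^{2\nu}$. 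The choice $1-4\nu-2\nu^2e^{2\nu}=0$ is exactly the condition that this quantity is at least $\tfrac12$. We therefore get, for every $k$,
\[
\|x_{k+1}-x^*\|^2\le (1-\gamma_k\mu)\|x_k-x^*\|^2-\tfrac12\gamma_k^2\|F(x_k)\|^2 .
\]
In particular $\|x_k-x^*\|^2$ is nonincreasing, because $0<\gamma_k\mu\le\nu<1$.

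Next split the iterations into two classes according to which term of the stepsize denominator dominates.

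\emph{Large-operator iterations} are those with $L_1\|F(x_k)\|\ge L_0$. There $\gamma_k\ge \nu/(2L_1\|F(x_k)\|)$, so $\gamma_k\|F(x_k)\|\ge \nu/(2L_1)$. Dropping the factor $(1-\gamma_k\mu)\le1$, the squared distance then decreases by at least $\nu^2/(8L_1^2)$. Since the squared distance starts at $\|x_0-x^*\|^2$, never increases, and stays nonnegative, there can be at most $\floor{8L_1^2\|x_0-x^*\|^2/\nu^2}$ such iterations in total. Their positions in the run do not matter.

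\emph{Small-operator iterations} are those with $L_1\|F(x_k)\|<L_0$. There $\gamma_k\ge\nu/(2L_0)$, so each such step contracts the squared distance by the factor $1-\nu\mu/(2L_0)$. All other steps are also non-expansive for the squared distance. Hence after $m$ small-operator iterations, whenever they occur, we have
\[
\|x-x^*\|^2\le (1-\nu\mu/(2L_0))^m\|x_0-x^*\|^2\le e^{-m\nu\mu/(2L_0)}\|x_0-x^*\|^2 ,
\]
and this is at most $\eps$ once $m=\ceil{\frac{2L_0}{\nu\mu}\log\frac{\|x_0-x^*\|^2}{\eps}}$.

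Now take $K$ iterations with $K$ equal to the stated sum. By pigeonhole, at least the required number $m$ of them are small-operator iterations, since at most $\floor{8L_1^2\|x_0-x^*\|^2/\nu^2}$ are large. Monotonicity of the squared distance then gives $\|x_{K+1}-x^*\|^2\le\eps$.

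The main obstacle is the interleaving of the two regimes: nothing guarantees that all large-operator steps come first. This is handled entirely by the monotone decrease of $\|x_k-x^*\|^2$, which lets us count the steps of each type independently. The other delicate point is checking that the specific $\nu$ leaves the factor $\tfrac12$ in front of $\gamma_k^2\|F(x_k)\|^2$; this reduces to the algebra above.
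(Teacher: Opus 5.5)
Your proposal is correct and follows the paper's proof essentially step for step. It uses the same $\beta=1$ descent inequality with the factor $\tfrac12$ from the choice of $\nu$, the same split at $L_1\|F(x_k)\|$ versus $L_0$, and the same bound of at most $\lfloor 8L_1^2\|x_0-x^*\|^2/\nu^2\rfloor$ large-operator steps. Your explicit pigeonhole-and-monotonicity remark about interleaving spells out what the paper encodes in the exponent $\max\{k+1-K',0\}$.
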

\begin{proof}
    Starting from \eqref{eg usm predescent beta1}, we have
    \begin{align}
        \|x_{k+1}-x^*\|^2 &\leqfrom{\ref{eg usm predescent beta1}}  \p{1-\gamma_k\mu}\|x_k-x^*\|^2-\p{1-C_\nu^2 -2\gamma_k\mu}\|x_k-\xhatk\|^2 \notag\\
        &=  \p{1-\gamma_k\mu}\|x_k-x^*\|^2-\p{1-\nu^2 e^{2\nu} -2\gamma_k\mu}\gamma_k^2\|F(x_k)\|^2  \notag\\
        &\leqfrom{\ref{eqn: eg usm gmu nu}} \p{1-\gamma_k\mu}\|x_k-x^*\|^2-\p{1-\nu^2 e^{2\nu} -2\nu}\gamma_k^2\|F(x_k)\|^2 
        \label{eqn: eg cor usm start}
    \end{align}
    where in the second line we used that $C_\nu=\nu e^\nu$ when $\alpha=1$ and from the definition of \eg, $\|x_k-\xhatk\|=\gamma_k\|F(x_k)\|$ in the unconstrained setting.
    
Furthermore, from our choice of $\nu$ as the solution to $1-4\nu-2\nu^2 e^{2\nu}=0$, we have that
    \begin{equation}
        1-2\nu-\nu^2e^{2\nu}=\frac{1}{2} \label{eqn: eg cor usm nu choice}
    \end{equation}
Substituting this into \eqref{eqn: eg cor usm start}, we obtain
 \begin{align}
     \|x_{k+1}-x^*\|^2 &\leq\p{1-\gamma_k\mu}\|x_k-x^*\|^2-\half\gamma_k^2\|F(x_k)\|^2  \label{eqn: eg descent usm cor}
 \end{align}
We now separate into two cases depending on the magnitude of $\|F(x_k)\|^2$.

    \textbf{Case 1: $\|F(x_k)\| > \nicefrac{L_0}{L_1}$}. In this setting, since $L_1\|F(x_k)\|> L_0$, we have
    \begin{align*}
        \gamma_k \|F(x_k)\|&= \frac{\nu\|F(x_k)\|}{L_0+L_1\|F(x_k)\|} \\
        &> \frac{\nu}{2L_1}
    \end{align*}
  Note that from the second term of (\ref{eqn: eg descent usm cor}), this implies that $\|x_k-x^*\|^2$ decreases by at least $\frac{\nu^2}{8L_1^2}$ at each iteration in this regime. Consequently, at most
\begin{equation}
    K'=\floor{\frac{8L_1^2}{\nu^2} \|x_0-x^*\|^2 }
\end{equation}
iterations can occur with $\|F(x_k)\| > \nicefrac{L_0}{L_1}$.

\textbf{Case 2: $\|F(x_k)\|\leq \nicefrac{L_0}{L_1}$}: In this regime, we have,
\begin{align*}
        \gamma_k & \geq \frac{\nu }{2L_0}
    \end{align*}
    Using the first term of (\ref{eqn: eg descent usm cor}) and accounting for the number of iterations that Case 1 is satisfied, we then obtain that
    \begin{align*}
        \|x_{k+1}-x^*\|^2 \leq \p{1-\frac{\nu\mu}{2L_0}}^{\max\set{k+1-K', 0}}\|x_0-x^*\|^2
    \end{align*}
    Hence, we conclude that $\|x_{K+1}-x^*\|^2\leq \eps$ after at most
    \begin{equation*}
        K= \floor{\frac{8L_1^2}{\nu^2} \|x_0-x^*\|^2 }+ \ceil{\frac{2L_0}{\nu\mu}\log\frac{\|x_0-x^*\|^2}{\eps}}
    \end{equation*}
    iterations, as desired.    
\end{proof}

    \subsubsection{Proof of Theorem~\ref{thm: popov sm}}
    \begin{theorem}
        Let $F$ be $\alpha$-symmetric $(L_0,L_1)$-Lipschitz and $\mu$-strongly monotone. Then the iterates of \peg with stepsizes given by (\ref{Popov stepsizes}) and $\nu$ chosen so that $1-2\nu-4C_\nu^2\geq 0$ satisfy
        \begin{equation*}
           R_k\leq (1-\ugamma\mu)^kR_0
        \end{equation*}
        where $R_k=\|x_k-x^*\|^2+\|x_k-\xhat_{k-1}\|^2$  and $\ugamma$ is defined in Lemma~\ref{lem:popov step lower bound}. 
    \end{theorem}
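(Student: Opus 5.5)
The plan is to start from the \peg descent inequality \eqref{popov predescent} with $y=x^*$. Since $x^*$ solves \eqref{VIP Solution} and $\xhatk\in U$, the term $-2\gamma_k\inprd{F(x^*)}{\xhatk-x^*}$ is nonpositive and can be dropped. Writing $R_k=R_k(x^*)$, I choose $\beta=1$, which gives
\begin{equation*}
R_{k+1}\le (1-\gamma_k\mu)R_k-(1-4C_\nu^2-2\gamma_k\mu)\|x_k-\xhatk\|^2+(4C_\nu^2-1+\gamma_k\mu)\|x_k-\xhat_{k-1}\|^2 .
\end{equation*}
The goal is to show that the last two terms are nonpositive, so that $R_{k+1}\le(1-\gamma_k\mu)R_k$.

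To control these coefficients, I bound $\gamma_k\mu$ by $\nu$ using \eqref{mu L0 L1}, applied at the point $\xhat_{k-1}$.
\begin{itemize}
\item For $\alpha=1$: since $e^{L_1d_k}\ge1$, we get $\gamma_k\mu\le \nu\mu/(L_0+L_1\|F(\xhat_{k-1})\|)\le\nu$.
\item For $\alpha\in(0,1)$: the term $K_2s_{k,\alpha}^{p_\alpha}$ in the denominator is nonnegative, so $\gamma_k\mu\le \nu\mu/(K_0+K_1\|F(\xhat_{k-1})\|^\alpha)\le\nu$.
\end{itemize}
With this bound, the coefficient $1-4C_\nu^2-2\gamma_k\mu$ is at least $1-4C_\nu^2-2\nu\ge0$ by hypothesis, so the $\|x_k-\xhatk\|^2$ term can be dropped. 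For the other coefficient, $4C_\nu^2-1+\gamma_k\mu\le 4C_\nu^2-1+\nu\le -\nu\le 0$, again using $1-2\nu-4C_\nu^2\ge0$. So that term can be dropped as well.

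These two steps give $R_{k+1}\le(1-\gamma_k\mu)R_k$. Because $\gamma_k\mu\le\nu<1$, the factor lies in $[0,1)$, so $R_{k+1}\le R_k\le R_0$ for all $k$ by induction. This is exactly the hypothesis of Lemma~\ref{lem:popov step lower bound}, which requires $C_\nu<1$; that holds here since $4C_\nu^2\le1$. The lemma then gives $\gamma_k\ge\ugamma$, hence $R_{k+1}\le(1-\ugamma\mu)R_k$, and unrolling yields $R_k\le(1-\ugamma\mu)^kR_0$.

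The main obstacle is avoiding circularity. The lower bound on $\gamma_k$ needs the iterates to be bounded, and boundedness has to come from the per-step contraction. The argument therefore first proves the contraction with the actual stepsize $\gamma_k$, using only the upper bound $\gamma_k\mu\le\nu$, and only afterwards invokes the stepsize lower bound. A minor check is that $1-\ugamma\mu\ge0$, which follows from $\ugamma\le\gamma_k$.
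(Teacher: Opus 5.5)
Your proposal is correct and follows the paper's proof essentially step for step. Like the paper, you take $y=x^*$ and $\beta=1$ in \eqref{popov predescent}, and use \eqref{mu L0 L1} at $\xhat_{k-1}$ to get $\gamma_k\mu\le\nu$, which makes both extra coefficients sign-definite. You then obtain $R_{k+1}\le R_k\le R_0$ from the contraction with the actual stepsize, and only afterwards invoke Lemma~\ref{lem:popov step lower bound} to replace $\gamma_k$ by $\ugamma$ and unroll.
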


    \begin{proof}
    From (\ref{popov predescent}), taking $y=x^*$ we have
    \begin{align}
      R_{k+1}  &\leqfrom{\ref{popov predescent}}  \p{1-\frac{2\gamma_k\mu}{1+\beta}}R_k-\p{1-4C_\nu^2-\frac{2\gamma_k\mu}{\beta}}\|x_k-\xhatk\|^2 \notag\\
        &\qquad+\p{4C_\nu^2-1+\frac{2\gamma_k\mu}{1+\beta}}\|x_k-\xhat_{k-1}\|^2  -2\gamma_k\inprd{F(x^*)}{\xhatk-x^*}\notag\\
      &\leqfrom{\ref{VIP Solution}}\p{1-\frac{2\gamma_k\mu}{1+\beta}}R_k(y)-\p{1-4C_\nu^2-\frac{2\gamma_k\mu}{\beta}}\|x_k-\xhatk\|^2 \notag\\
        &\qquad+\p{4C_\nu^2-1+\frac{2\gamma_k\mu}{1+\beta}}\|x_k-\xhat_{k-1}\|^2  \label{eqn: popov usm prebeta descent} \\
      \end{align}
We now set $\beta=1$ and use the stepsize definition to obtain the following:
        \begin{align}
        R_{k+1}&\leq (1-\gamma_k\mu)R_k-(1-2\gamma_k\mu-4C_\nu^2)\|x_k-\xhatk\|^2 \notag\\
      &\qquad+(4C_\nu^2+\gamma_k\mu-1)\|x_k-\xhat_{k-1}\|^2 \notag \\
            &\leqfrom{\ref{mu L0 L1}} (1-\gamma_k\mu)R_k-(1-2\nu-4C_\nu^2)\|x_k-\xhatk\|^2-(1-\nu-4C_\nu^2)\|x_k-\xhat_{k-1}\|^2  \label{popov predescent usm}
        \end{align}
  
    We continue the proof by choosing $\nu$ so that $ 1-\nu-4C_\nu^2\geq 1-2\nu-4C_\nu^2\geq 0$. When $\alpha=1$ this corresponds to $\nu <0.26$, and when $\alpha\in (0,1)$, this is $\nu\leq\frac{-1+\sqrt{5}}{4} \approx 0.31$. Then we have
    \begin{align}
        R_{k+1} \leq (1-\gamma_k\mu)R_k \label{popov descent gammak}
    \end{align}
    Note from our step size definition and (\ref{mu L0 L1}), we have $\gamma_k\mu <1$, indicating that $R_{k+1}\leq R_k\leq R_0$. This allows us to apply Lemma~\ref{lem:popov step lower bound} to equation (\ref{popov descent gammak}), giving us that
    \begin{equation*}
        R_{k+1}\leq (1-\ugamma \mu)R_k
    \end{equation*}
  Unrolling this inequality gives us the final result.
    \end{proof}

    \subsubsection{Proof of Corollary~\ref{cor: popov sm}}
    \begin{corollary}
          Let $F$ be $1$-symmetric $(L_0,L_1)$-Lipschitz and $\mu$-strongly monotone, and $U=\R^n$ (unconstrained case). Then the iterates of \peg with stepsizes given by (\ref{Popov stepsizes}) and $\nu$ chosen so that $\nu\approx0.17$ is the solution to the $1-4\nu-8\nu^2e^{2\nu}=0$ satisfy $R_k\leq\varepsilon$ after 
         \begin{equation*}
              K= \floor{\frac{8L_1^2}{\nu^2} \exp\p{\frac{2\nu^2 e^\nu}{1-\nu e^\nu}} R_0 } +  \ceil{\frac{2L_0}{\nu\mu}\exp\p{\frac{\nu^2 e^\nu}{1-\nu e^\nu}}\log\frac{R_0}{\eps}}
         \end{equation*}
       iterates, where $R_k=\|x_k-x^*\|^2+\|x_k-\xhat_{k-1}\|^2$.
    \end{corollary}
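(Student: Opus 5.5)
We mirror the proof of Corollary~\ref{cor: eg usm}, now using the Lyapunov function $R_k$ and the \peg descent inequality. Start from \eqref{popov predescent usm} with $y=x^*$, which uses $\beta=1$ and $\gamma_k\mu\le\nu$ from \eqref{mu L0 L1}. Keep the $\|x_k-\xhatk\|^2$ term instead of discarding it. In the unconstrained case, $\xhatk=x_k-\gamma_kF(\xhat_{k-1})$, so $\|x_k-\xhatk\|=\gamma_k\|F(\xhat_{k-1})\|$, and with $C_\nu=\nu e^\nu$ we obtain
\begin{equation*}
R_{k+1}\le(1-\gamma_k\mu)R_k-(1-2\nu-4\nu^2e^{2\nu})\gamma_k^2\|F(\xhat_{k-1})\|^2-(1-\nu-4\nu^2e^{2\nu})\|x_k-\xhat_{k-1}\|^2 .
\end{equation*}
The choice $1-4\nu-8\nu^2e^{2\nu}=0$ gives $1-2\nu-4\nu^2e^{2\nu}=\tfrac12$, and the last coefficient is then nonnegative. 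Hence
\begin{equation*}
R_{k+1}\le(1-\gamma_k\mu)R_k-\tfrac12\gamma_k^2\|F(\xhat_{k-1})\|^2 .
\end{equation*}
In particular $R_k\le R_0$, so the bound \eqref{popov half step bound} holds: $L_1\|x_k-\xhat_{k-1}\|\le \frac{\nu^2e^\nu}{1-\nu e^\nu}=:c$. Consequently
\begin{equation*}
\gamma_k\ge \frac{\nu e^{-c}}{L_0+L_1\|F(\xhat_{k-1})\|}.
\end{equation*}

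We then split into two cases on $\|F(\xhat_{k-1})\|$.

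\textbf{Case 1: $\|F(\xhat_{k-1})\|>L_0/L_1$.} The denominator is below $2L_1\|F(\xhat_{k-1})\|$, so $\gamma_k\|F(\xhat_{k-1})\|>\frac{\nu e^{-c}}{2L_1}$. Each such iteration therefore decreases $R_k$ by at least $\frac{\nu^2e^{-2c}}{8L_1^2}$, using $1-\gamma_k\mu\le1$. Since $R_k\ge0$ and $R_k\le R_0$, at most $\floor{\frac{8L_1^2}{\nu^2}e^{2c}R_0}$ iterations fall in this case.

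\textbf{Case 2: $\|F(\xhat_{k-1})\|\le L_0/L_1$.} Here $\gamma_k\ge\frac{\nu e^{-c}}{2L_0}$, so $R_{k+1}\le(1-\frac{\nu\mu}{2L_0}e^{-c})R_k$.

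Every iteration is non-expansive for $R_k$, so the contractions from Case 2 multiply. After $K'$ Case~1 iterations and $m$ Case~2 iterations we have $R\le(1-\frac{\nu\mu}{2L_0e^{c}})^mR_0\le\exp(-\frac{m\nu\mu}{2L_0e^c})R_0$. Choosing $m=\ceil{\frac{2L_0}{\nu\mu}e^c\log\frac{R_0}{\eps}}$ gives $R_k\le\eps$, and the total count matches the stated $K$.

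The main obstacle is the stepsize lower bound, which must avoid the exponential dependence on $D_0$. This requires the $D_0$-free bound on $\|x_k-\xhat_{k-1}\|$ from \eqref{popov half step bound} in place of Lemma~\ref{lem:popov step lower bound}. That bound only needs $C_\nu<1$, which holds since $\nu\approx0.17$. One must also check that $1-\nu-4\nu^2e^{2\nu}\ge0$ at this $\nu$, so the extra term can be dropped.
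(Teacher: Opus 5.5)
Your proposal is correct and follows the same argument as the paper's proof: you use the $\beta=1$ inequality \eqref{popov predescent usm}, the identity $\|x_k-\xhatk\|=\gamma_k\|F(\xhat_{k-1})\|$ with the choice of $\nu$ giving coefficient $\frac{1}{2}$, the $D_0$-free bound \eqref{popov half step bound} on $\|x_k-\xhat_{k-1}\|$, and the same two-case split on whether $\|F(\xhat_{k-1})\|$ exceeds $L_0/L_1$. Your observation that, for $\alpha=1$, \eqref{popov half step bound} needs only $C_\nu<1$ rather than the boundedness hypothesis of Lemma~\ref{lem:popov step lower bound} is also accurate.
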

    \begin{proof}
       We take $\nu$ to be the solution to $1-4\nu-8\nu^2e^{2\nu}=0$. Then we have that
          \begin{align*}
              1-2\nu-4\nu^2e^{2\nu} &=\frac{1}{2} \\
              1-\nu-4\nu^2e^{2\nu}&=\frac{1}{2}+\nu
          \end{align*}
        Combining this with (\ref{popov predescent usm}), we have
        \begin{align}
            R_{k+1} &\leq (1-\gamma_k\mu)R_k-\frac{1}{2}\|x_k-\xhatk\|^2-\p{\frac{1}{2}+\nu}\|x_k-\xhat_{k-1}\|^2\notag \\
            &= (1-\gamma_k\mu)R_k-{\frac{\gamma_k^2}{2}\|F(\xhat_{k-1})\|^2}-\p{\frac{1}{2}+\nu}\|x_k-\xhat_{k-1}\|^2  \notag\\
            &\leq (1-\gamma_k\mu)R_k-{\frac{\gamma_k^2}{2}\|F(\xhat_{k-1})\|^2}\label{eqn: popov descent usm cor}
        \end{align}
    Note also since the criteria of Lemma~\ref{lem:popov step lower bound} are satisfied, we can use the intermediate equation~(\ref{popov half step bound}):
    \begin{equation}
         \|x_k-\xhat_{k-1}\|\leq \frac{\nu^2 e^\nu}{L_1(1-\nu e^\nu)} \label{eqn: popov half step lower bound usm cor} 
    \end{equation}
    We now separate our convergence into two cases depending on the magnitude of $\|F(\xhat_{k-1})\|$.
    
    \textbf{Case 1: $\|F(\xhat_{k-1})\| > \nicefrac{L_0}{L_1}$}. In this setting, note that since we have $L_1\|F(\xhat_{k-1}\|>L_0$, we readily obtain
    \begin{align*}
        \gamma_k\|F(\xhat_{k-1})\| &= \frac{\nu\|F(\xhat_{k-1})\|}{(L_0+L_1\|F(\xhat_{k-1})\|)\exp\p{L_1\|x_k-\xhat_{k-1}\|}} \\ 
        &> \frac{\nu}{2L_1\exp(L_1\|x_k-\xhat_{k-1}\|)}
        \end{align*}
    Then using (\ref{eqn: popov half step lower bound usm cor}), we have
    \begin{align*}
        \gamma_k\|F(\xhat_{k-1})\| &\geq \frac{\nu}{2L_1 \exp\p{\frac{\nu^2 e^\nu}{1-\nu e^\nu}}}
    \end{align*}
    Note that from the second term of (\ref{eqn: popov descent usm cor}), this implies that $R_k$ decreases by at least $\frac{\nu^2}{8L_1^2 \exp\p{\frac{2\nu^2 e^\nu}{1-\nu e^\nu}}}$ at each iteration in this regime. Consequently, at most
    \begin{equation}
        K'=\floor{\frac{8L_1^2}{\nu^2} \exp\p{\frac{2\nu^2 e^\nu}{1-\nu e^\nu}} R_0 }
    \end{equation}
    iterations can occur with $\|F(\xhat_{k-1})\| > \nicefrac{L_0}{L_1}$.
    
    \textbf{Case 2: $\|F(\xhat_{k-1})\| \leq \nicefrac{L_0}{L_1}$}. In this regime, we have
    \begin{align*}
        \gamma_k &\geq \frac{\nu}{2L_0\exp(L_1\|x_k-\xhat_{k-1}\|)} \\
        &\geq \frac{\nu}{2L_0\exp\p{\frac{\nu^2 e^\nu}{1-\nu e^\nu}}} 
    \end{align*}
    Using the first term of (\ref{eqn: popov descent usm cor}) and combining this result with our iteration count from Case 1, we then obtain that
    \begin{equation*}
        R_{k+1} \leq R_0\p{1-\frac{\nu\mu}{2L_0\exp\p{\frac{\nu^2 e^\nu}{1-\nu e^\nu}}}}^{\max\set{k+1-K', 0}}
    \end{equation*}
         Thus, we conclude that $R_{K+1}\leq\varepsilon$ after at most 
         \begin{equation*}
             K= \floor{\frac{8L_1^2}{\nu^2} \exp\p{\frac{2\nu^2 e^\nu}{1-\nu e^\nu}} R_0 }+  \ceil{\frac{2L_0}{\nu\mu}\exp\p{\frac{\nu^2 e^\nu}{1-\nu e^\nu}}\log\frac{R_0}{\eps}}
         \end{equation*}
         iterations.
    \end{proof}

    \subsection{Proofs of Line-Search Results and Algorithm Details}
    \label{app: backtrack proofs}
    In this section, we provide details on our line-search method and proofs of the convergence results. The full algorithm is detailed in Algorithm~\ref{alg:AdaptiveAlgo}. The stepsize forms are the same as for our methods without line-search. However, each time the backtracking condition in Line~\ref{alg: while line} (based on (\ref{eqn: backtrack cond})) is violated, we alternate between updating our estimates for $L_0$ and $L_1$ ($\lambda_k^0$ and $\lambda^1_k$) using the update rules in Lines \ref{alg: L0 update} and \ref{alg: L1 update}. These updates allow each Lipschitzness parameter to be updated in a way roughly proportional to its contribution to the stepsize denominator, and have been used both for \eg in the unconstrained $(L_0,L_1)$-Lipschitz setting \citep{yoon2026polyaktypeextragradientmethodsmonotone} and for the Franke-Wolfe algorithm in the $(L_0,L_1)$-smooth setting \citep{vyguzov2026frankwolfealgorithmsl0l1smooth}. 

\begin{algorithm}
    \small
        \caption{\algname{(P)EG-LS} \newline $\tilde{x}_k=x_k$ for \algname{EG-LS} and $\xhat_{k-1}$ for \algname{PEG-LS}; \newline$\gamma(\lambda_k^0,\lambda_k^1)=\frac{\nu}{\lambda_k^0+\lambda_k^1\|F(x_k)\|}$ for \algname{EG-LS}, $\frac{\nu}{(\lambda_k^0+\lambda_k^1\|F(\xhat_{k-1})\|)\exp(\lambda_k^1\|x_k-\xhat_{k-1}\|)}$ for \algname{PEG-LS}}
        \label{alg:AdaptiveAlgo}
        \begin{algorithmic}[1]
            \REQUIRE Initial  point $x_0 \in U$; Initial line search estimates $\lambda_{-1}^0,\lambda_{-1}^1 > 0$; Line search factor $\rho > 2$; Stepsize scaling parameter $\nu>0$
            \STATE $\textrm{toggle} = 0$
            \FOR{$k = 0, 1,...,K-1$}
            \STATE $\lambda_k^0 = \lambda_{k-1}^0$
            \STATE $\lambda_k^1 = \lambda_{k-1}^1$
            \STATE $\gamma_k = \gamma(\lambda_k^0,\lambda_k^1)$
            \STATE $\xhatk = P_U(x_k - \gamma_k F(\tilde{x}_k))$.
            \WHILE{$\gamma_k\|F(\hat{x}_k)-F(\tilde{x}_{k})\| \label{alg: while line}
            > \nu e^\nu \|\hat{x}_k-\tilde{x}_{k}\|$}
                \IF{$\textrm{toggle}=0$}
                    \STATE 
                    $\lambda_k^0=\lambda_k^0\p{\rho-\frac{\lambda_k^0}{\lambda_k^0+\lambda_k^1\|F(\tilde{x}_k)\|}}$ \label{alg: L0 update}
                    \STATE $\textrm{toggle}=1$
                \ELSIF{$\textrm{toggle}=1$}
                    \STATE
                    $\lambda_k^1=\lambda_k^1\p{\rho-\frac{\lambda_k^1\|F(\tilde{x}_k)\|}{\lambda_k^0+\lambda_k^1\|F(\tilde{x}_k)\|}}$  \label{alg: L1 update}
                    \STATE $\textrm{toggle}=0$
                \ENDIF
                \STATE $\gamma_k = \gamma(\lambda_k^0,\lambda_k^1)$
                \STATE $\xhatk=P_U(x_k-\gamma_kF(\tilde{x}_k))$.
            \ENDWHILE
            \STATE $x_{k+1} = P_U(x_k - \gamma_k F(\xhatk))$
            \ENDFOR
        \end{algorithmic}
    \end{algorithm}

 We can readily show that the algorithm will only backtrack a finite number of times.

\begin{lemma}
\label{lem: finite while loops}
Suppose $F$ is 1-symmetric $(L_0,L_1)$-Lipschitz. Then the total number of while-loop calls of Algorithm~\ref{alg:AdaptiveAlgo} with backtracking parameter $\rho>2$ is at most 
    \begin{equation}
    \label{eqn:linesearch-finite-operator-call-bound}
        N=2\max\set{
            0, 
            \ceil{\frac{\log(L_0/\lambda_{-1}^0)}{\log(\rho-1)}},
            \ceil{\frac{\log(L_1/\lambda_{-1}^1)} {\log(\rho-1)}}
        }
    \end{equation}
    
    Consequently, for all accepted steps we have
    \begin{equation}
        \lambda_k^0 \le \lamz
        =
        \rho^{\lceil N/2 \rceil}\lambda_{-1}^0,
        \qquad
        \lambda_k^1 \le \lamo
        =
        \rho^{\lceil N/2 \rceil}\lambda_{-1}^1. \label{eqn: lam max app}
    \end{equation}
    \end{lemma}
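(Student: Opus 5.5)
The proof rests on two facts. First, the acceptance test always passes once both estimates dominate the true constants. Second, each rejection multiplies one estimate by at least $\rho-1>1$, and each estimate grows by at most a factor $\rho$.

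\textbf{Step 1: a sufficient condition for acceptance.} Suppose $\lambda_k^0\ge L_0$ and $\lambda_k^1\ge L_1$. Then the trial stepsize is at most the stepsize obtained with $(L_0,L_1)$, since the denominator is monotone in both estimates; for \adapeg this includes the factor $\exp(\lambda_k^1\|x_k-\xhat_{k-1}\|)$. The while-condition does not hold in this case. For \adaeg, repeat the proof of \eqref{EG f bound}:
\begin{itemize}
\item use \eqref{a1 l0l1} and \eqref{EG half step f} to get $\gamma_k\|F(\xhatk)-F(x_k)\|\le\gamma_k(L_0+L_1\|F(x_k)\|)e^{L_1\gamma_k\|F(x_k)\|}\|\xhatk-x_k\|$;
\item bound $\gamma_k(L_0+L_1\|F(x_k)\|)\le\nu$ and $L_1\gamma_k\|F(x_k)\|\le\nu$, both using $\lambda_k^i\ge L_i$.
\end{itemize}
For \adapeg, repeat \eqref{f bound a1 popov} with \eqref{popov half step diff}. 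The exponent satisfies $L_1\|x_k-\xhat_{k-1}\|+L_1\gamma_k\|F(\xhat_{k-1})\|$, and it is absorbed by $e^{\lambda_k^1\|x_k-\xhat_{k-1}\|}$ in the denominator together with $e^{\nu}$. So a rejection can only occur while $\lambda_k^0<L_0$ or $\lambda_k^1<L_1$.

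\textbf{Step 2: growth per update.} Write $t=\lambda_k^0/(\lambda_k^0+\lambda_k^1\|F(\tilde x_k)\|)\in(0,1]$. The update in Line~\ref{alg: L0 update} multiplies $\lambda^0$ by $\rho-t\in[\rho-1,\rho)$, and Line~\ref{alg: L1 update} does the same for $\lambda^1$ with the complementary ratio. Thus every update of either estimate is a multiplication by a factor between $\rho-1>1$ and $\rho$. The estimates are never decreased, and they are carried across iterations, as is the toggle.

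\textbf{Step 3: counting.} Because of the toggle, the rejections over the whole run alternate strictly between $\lambda^0$ and $\lambda^1$, including across iterations. After $2m$ rejections, each estimate has been multiplied by a factor of at least $(\rho-1)^m$. Take
\[
m=\max\set{0,\ceil{\log(L_0/\lambda_{-1}^0)/\log(\rho-1)},\ceil{\log(L_1/\lambda_{-1}^1)/\log(\rho-1)}}.
\]
Then after $2m=N$ rejections both $\lambda^0\ge L_0$ and $\lambda^1\ge L_1$, so by Step 1 no further rejection occurs. This gives the bound $N$.

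For \eqref{eqn: lam max app}, each estimate has been updated at most $\lceil N/2\rceil$ times, each time by a factor less than $\rho$. This gives $\lambda_k^i\le\rho^{\lceil N/2\rceil}\lambda_{-1}^i$.

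The main obstacle I expect is the parity bookkeeping in Step 3. One estimate may already exceed its true constant while the other does not, yet rejections still alternate, so the "wasted" updates must be counted. The $2\times\max$ form handles this, because every pair of consecutive rejections contains exactly one update of each estimate. A second point to check is the \adapeg version of Step 1. There the stepsize uses $\lambda^1$ inside the exponential, and I will verify that the monotonicity argument goes through without needing any bound on $\|x_k-\xhat_{k-1}\|$.
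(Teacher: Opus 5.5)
Your proposal is correct and follows the same route as the paper. It shows that the test passes once both estimates dominate $L_0,L_1$, that each update factor lies between $\rho-1$ and $\rho$, and it counts rejections through the strict alternation of the toggle. Your Step 1 also spells out what the paper leaves implicit when it cites Lemma~\ref{lem:lipschitz analogue}: overestimates only shrink $\gamma_k$, and for \adapeg the factor $e^{\lambda_k^1\|x_k-\xhat_{k-1}\|}$ absorbs $e^{L_1\|x_k-\xhat_{k-1}\|}$. One small slip: the $\lambda^1$ factor $\rho-\lambda_k^1\|F(\tilde x_k)\|/(\lambda_k^0+\lambda_k^1\|F(\tilde x_k)\|)$ equals $\rho$ when $F(\tilde x_k)=0$, so say ``at most $\rho$'' rather than ``less than $\rho$''; the bound $\rho^{\lceil N/2\rceil}\lambda_{-1}^1$ is unchanged.
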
 
    \begin{proof}
     First, if both $\lambda_k^0\geq L_0$ and $\lambda_k^1 \geq L_1$, then from Lemma~\ref{lem:lipschitz analogue} we have
        \begin{equation*}
            \gamma_k\|F(\xhatk)-F(\tilde{x}_k)\|\leq C_\nu\|\xhatk-\tilde{x}_k\|
        \end{equation*}
        where $\tilde{x}_k=x_k$ for \eg and $\xhat_{k-1}$ for \peg. Note that this is the while loop condition for Algorithm~\ref{alg:AdaptiveAlgo}, so once $\lambda_k^0\geq L_0$ and $\lambda_k^1 \geq L_1$, no additional while loop calls will occur. 
        
       Now, note that at each update step in the while loop, the update factor applied to $\lambda_k^0$ or $\lambda_k^1$ is lower bounded by $\rho-1$ and upper bounded by $\rho$. Thus, after $k$ while-loop calls, the denominator estimates are lower bounded by
        
    \begin{equation*}
       \lambda_k^0\geq  (\rho-1)^{\lfloor k/2 \rfloor}\lambda_{-1}^0,
        \qquad
        \lambda_k^1\geq(\rho-1)^{\lfloor k/2 \rfloor}\lambda_{-1}^1
    \end{equation*}
    
    Consequently, by the definition of $N$, we have that
    \begin{equation*}
        \lambda_N^0\geq (\rho-1)^{\ceil{N/2}}\lambda_{-1}^0\geq L_0,
        \qquad
        \lambda_N^1\geq (\rho-1)^{\ceil{N/2}}\lambda_{-1}^1\geq L_1.
    \end{equation*}
    
    At this point, no further while loop calls will occur, leading to at most $N$ while loop calls over all the algorithm's iterations.  The upper bounds bounds on $\lambda_k^0$ and $\lambda_k^1$ follow immediately since both sequences only increase during while-loop calls. As a final note, these bounds on the number of while loop calls imply that over $K$ iterations of each algorithm, there are at most $2K+N$ operator evaluations for \adaeg and $K+N+1$ operator evaluations for \adapeg. 
    \end{proof}

    We now continue into our convergence proofs, treating \adaeg and \adapeg separately. For each algorithm, we present a lemma lower bounding the stepsizes and then the corollaries with the convergence guarantee for the corresponding setting. 

\subsubsection{\adaeg Convergence Proofs}     \begin{lemma}
 \label{lem: adaeg step lower bound}
  Let $F$ be $1$-symmetric $(L_0,L_1)$-Lipschitz. Then if the iterates of \adaeg satisfy $\|x_{k+1}-x^*\|\leq\|x_0-x^*\|$ for all $k\in 0,1,2,...$ we have 
        \begin{equation}
            \gamma_k\geq \ugamma = 
                \frac{\nu}{\lamz+\lamo\p{ \|x_0-x^*\|(L_0+L_1\|F(x^*)\|)\exp(L_1\|x_0-x^*\|) + \|F(x^*)\|}} 
         \label{adaEG step lower bound}
        \end{equation} 
        where $\lamz$ and $\lamo$ are defined in Lemma~\ref{lem: finite while loops}
        \end{lemma}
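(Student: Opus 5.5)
The plan is to follow the proof of Lemma~\ref{lem:EG step lower bound} for $\alpha=1$, replacing the true constants $L_0,L_1$ in the stepsize denominator by the accepted estimates $\lambda_k^0,\lambda_k^1$. These estimates will then be controlled via Lemma~\ref{lem: finite while loops}. The accepted stepsize of \adaeg is $\gamma_k=\frac{\nu}{\lambda_k^0+\lambda_k^1\|F(x_k)\|}$. A lower bound on $\gamma_k$ therefore follows once we bound above the three quantities $\lambda_k^0$, $\lambda_k^1$ and $\|F(x_k)\|$. Since the denominator is monotone increasing in each of them, any upper bounds can be substituted directly.

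\textbf{Step 1 (the estimates).} By \eqref{eqn: lam max app} in Lemma~\ref{lem: finite while loops}, every accepted step satisfies $\lambda_k^0\le\lamz$ and $\lambda_k^1\le\lamo$. This holds because the estimates only increase inside the while loop. Moreover, the loop is entered at most $N$ times in total, and each update multiplies by a factor at most $\rho$. This step needs only the $1$-symmetric $(L_0,L_1)$-Lipschitz assumption; no iterate boundedness is used.

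\textbf{Step 2 (the operator norm).} I will use the true operator, not the estimates. By the triangle inequality and \eqref{a1 l0l1} with $y=x^*$,
\[
\|F(x_k)\|\le \|F(x^*)\|+\|x_k-x^*\|(L_0+L_1\|F(x^*)\|)e^{L_1\|x_k-x^*\|}.
\]
The hypothesis $\|x_k-x^*\|\le\|x_0-x^*\|$ (given for $k\ge1$, trivial for $k=0$) lets me replace $\|x_k-x^*\|$ by $\|x_0-x^*\|$, exactly as in \eqref{eg f upper bound}. Note that the true $L_0,L_1$ appear here because this bound comes from the Lipschitz property of $F$, not from the algorithm's estimates. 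This is why the stated $\ugamma$ mixes $\lamz,\lamo$ with $L_0,L_1$.

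\textbf{Step 3 (combine).} Substituting both bounds into the denominator gives
\[
\gamma_k\ge\frac{\nu}{\lamz+\lamo\bigl(\|x_0-x^*\|(L_0+L_1\|F(x^*)\|)e^{L_1\|x_0-x^*\|}+\|F(x^*)\|\bigr)},
\]
which is \eqref{adaEG step lower bound}.

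The only point needing care is Step~1. One must check that \eqref{eqn: lam max app} applies to the values at acceptance, since the estimates are carried across iterations and only grow inside the while loop. One must also confirm that the accepted $\gamma_k$ is the one computed with the final $\lambda_k^0,\lambda_k^1$ of that iteration. Both facts follow directly from the structure of Algorithm~\ref{alg:AdaptiveAlgo}.
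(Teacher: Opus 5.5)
Your proposal is correct and follows the paper's proof step for step. Like the paper, you reuse the operator-norm bound \eqref{eg f upper bound} from the proof of Lemma~\ref{lem:EG step lower bound}, which uses the true $L_0,L_1$ together with the iterate bound, and you replace $\lambda_k^0,\lambda_k^1$ in the accepted stepsize by $\lamz,\lamo$ via \eqref{eqn: lam max app}; your remarks on the accepted $\gamma_k$ using the final estimates and on the trivial $k=0$ case are details the paper leaves implicit.
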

    \begin{proof}
    This result readily follows from Lemma~\ref{lem:EG step lower bound}. Specifically, note that from the assumed descent inequality, we can reuse the following intermediate inequality from Lemma~\ref{lem:EG step lower bound}:
        \begin{equation}
            \|F(x_k)\| \leqfrom{\ref{eg f upper bound}} \|x_0-x^*\|(L_0+L_1\|F(x^*)\|)\exp(L_1\|x_0-x^*\|) + \|F(x^*)\|  \label{eqn: eg f upper bound backtrack}
        \end{equation}
            Thus, we have that for \adaeg, the stepsizes will satisfy
    \begin{align*}
        \gamma_k &= \frac{\nu}{\lambda_k^0 + \lambda_k^1\|F(x_k)\|} \\
        &\geqfrom{\ref{eqn: lam max app}} \frac{\nu}{\lamz +\lamo\|F(x_k)\|} \\
        &\geqfrom{\ref{eqn: eg f upper bound backtrack}} \frac{\nu}{\lamz+ \lamo ( \|x_0-x^*\|(L_0+L_1\|F(x^*)\|)\exp(L_1\|x_0-x^*\|) + \|F(x^*)\|)}
    \end{align*}
    \end{proof}
 \begin{corollary}
 \label{cor: adaeg convergence app}
      Let $F$ be $1$-symmetric $(L_0, L_1)$-Lipschitz and monotone. Additionally let $\xbark=\frac{\sum_{k=0}^{K-1}\gamma_k\xhatk}{\sum_{k=0}^{K-1}\gamma_k}$ and $x^*\in U^*$. Then the averaged iterates $\xbark$ of \adaeg with $\nu$ chosen so that $\nu e^\nu< 1$  satisfy
         \begin{equation*}
          \gap{\xbark}\leq \p{2+\frac{1}{\sqrt{1-C_\nu^2}}}^2\frac{\|x_0-x^*\|^2}{2\ugamma K}
        \end{equation*}
     where $\ugamma$ is defined in Lemma~\ref{lem: adaeg step lower bound} and $B=\{y\in U:\|y-x^*\|^2\leq  (1+(1-C_\nu^2)^{-\nicefrac{1}{2}})^2\|x_0-x^*\|^2\}$. Further, $\xbark\in B$.
 \end{corollary}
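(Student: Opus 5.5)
The argument mirrors the proof of Theorem~\ref{thm: EG monotone}. The only substantive change is that the pointwise bound \eqref{EG f bound} from Lemma~\ref{lem:lipschitz analogue} is replaced by the acceptance criterion of the line search. By Lemma~\ref{lem: finite while loops}, the while loop terminates at every iteration. Hence each accepted step $\gamma_k$ satisfies
\[
\gamma_k\|F(\xhatk)-F(x_k)\|\leq \nu e^\nu\|\xhatk-x_k\| = C_\nu\|\xhatk-x_k\|.
\]
Note that $\xhatk$ and $x_{k+1}$ in Algorithm~\ref{alg:AdaptiveAlgo} are exactly the \eg updates with this accepted stepsize. Therefore Lemma~\ref{lem:unified predescent}, and the derivation of \eqref{vankov eg lemma}, apply verbatim; they use only the update structure and the projection properties. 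Substituting the acceptance inequality into the $\gamma_k^2\|F(\xhatk)-F(x_k)\|^2$ term and using monotonicity ($\mu=0$) gives \eqref{EG monotone predescent}:
\[
\|x_{k+1}-y\|^2\leq \|x_k-y\|^2-(1-C_\nu^2)\|x_k-\xhatk\|^2-2\gamma_k\inprd{F(y)}{\xhatk-y}\quad\text{for all } y\in U.
\]

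Next I take $y=x^*$ and use \eqref{VIP Solution}. Since $C_\nu<1$, this gives $\|x_{k+1}-x^*\|\leq\|x_k-x^*\|\leq \|x_0-x^*\|$. This is exactly the hypothesis of Lemma~\ref{lem: adaeg step lower bound}, which then yields $\gamma_k\geq\ugamma$ with the line-search lower bound \eqref{adaEG step lower bound}. It relies on $\lambda_k^0\le\lamz$ and $\lambda_k^1\le\lamo$ from Lemma~\ref{lem: finite while loops}. The same telescoping gives $\|x_k-\xhatk\|^2\leq \|x_0-x^*\|^2/(1-C_\nu^2)$. Young's inequality with $\beta=\sqrt{1-C_\nu^2}$ then places every $\xhatk$ in $B$ with $r=1+(1-C_\nu^2)^{-1/2}$. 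Since $\xbark$ is a convex combination of the $\xhatk$, we get $\xbark\in B$.

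Finally, for general $y\in B$ I telescope to obtain $2\sum_{k<K}\gamma_k\inprd{F(y)}{\xhatk-y}\leq\|x_0-y\|^2$. Taking $\supg$ and applying \eqref{Gap lemma} gives $2\ugamma K\,\gap{\xbark}\leq\supg\|x_0-y\|^2$. Bounding the right-hand side by $(1+r)^2\|x_0-x^*\|^2$ via Young with $\beta=r^{-1}$, as in \eqref{eg sup bound}, yields the stated constant $\p{2+\frac{1}{\sqrt{1-C_\nu^2}}}^2$.

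The only step requiring care is the first. I must check that the acceptance test is phrased with $\tilde{x}_k=x_k$, which it is, and that the final $\gamma_k$ used in $x_{k+1}$ is the accepted one, which holds because $x_{k+1}$ is computed after the loop exits. I must also note that finiteness of the loop (Lemma~\ref{lem: finite while loops}) is what makes the accepted step well defined. Beyond that, the proof is a line-by-line transcription of the proof of Theorem~\ref{thm: EG monotone}.
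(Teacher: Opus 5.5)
Your proposal is correct and is essentially the paper's proof. The paper likewise observes that the stepsize rule enters the proof of Theorem~\ref{thm: EG monotone} in only two places: through \eqref{EG f bound}, which the line-search acceptance test now guarantees, and through the stepsize lower bound, which is replaced by Lemma~\ref{lem: adaeg step lower bound}. Your extra checks — that the test uses $\tilde{x}_k=x_k$, that $x_{k+1}$ is computed with the accepted $\gamma_k$, and that Lemma~\ref{lem: finite while loops} makes the accepted step well defined — are details the paper leaves implicit.
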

 \begin{proof}
        Note that in the proofs of Theorems \ref{thm: EG monotone}, the specific stepsize choice for \eg is used in only two locations. The first is in showing that \eqref{EG f bound} holds, and the second is in lower bounding the stepsizes $\gamma_k$. With respect to the first use, Equations~\ref{EG f bound} is the backtracking conditions of \adaeg, guaranteeing that this equation holds at each iterate. With respect to the stepsize lower bounds, we simply use the new lower bound derived in Lemma~\ref{lem: adaeg step lower bound}. Thus, we obtain identical convergence results but with a different value for $\ugamma$.
     \end{proof}

\begin{corollary}
         Let $F$ be $1$-symmetric $(L_0, L_1)$-Lipschitz and $\mu$-strongly monotone. Then the iterates of \adaeg with $\nu$ chosen so that $\nu e^\nu <1$ satisfy
        \begin{equation*}
            \|x_k-x^*\|^2\leq \p{1-\frac{2\ugamma\mu(1-\nu^2 e^{2\nu})}{2\ugamma\mu+1-\nu^2 e^{2\nu}}}^k \|x_0-x^*\|^2
        \end{equation*}
        where $\ugamma$ is defined in Lemma~\ref{lem: adaeg step lower bound}.
\end{corollary}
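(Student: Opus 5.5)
The plan is to start from the \eg descent inequality \eqref{EG predescent} of Lemma~\ref{lem: eg descent}. That lemma's proof uses the specific stepsize only through \eqref{EG f bound}, and for \adaeg this bound is exactly the acceptance condition of the line search with $C_\nu=\nu e^\nu$. So \eqref{EG predescent} holds verbatim for the accepted steps. Taking $y=x^*$ and using \eqref{VIP Solution} to drop the inner-product term gives, for every $\beta>0$,
\begin{equation*}
\|x_{k+1}-x^*\|^2\le \Bigl(1-\tfrac{2\gamma_k\mu}{1+\beta}\Bigr)\|x_k-x^*\|^2-\Bigl(1-C_\nu^2-\tfrac{2\gamma_k\mu}{\beta}\Bigr)\|x_k-\xhatk\|^2 .
\end{equation*}

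Unlike Theorem~\ref{thm: EG sm}, here we cannot bound $\gamma_k\mu$ by a multiple of $\nu$ via \eqref{mu L0 L1}. The estimates $\lambda_k^0,\lambda_k^1$ may be smaller than $L_0,L_1$, so $\gamma_k\mu$ is not controlled by the stepsize formula. This is the main obstacle. To handle it, I will choose $\beta$ depending on $k$ so that the second coefficient vanishes: $\beta_k=\frac{2\gamma_k\mu}{1-C_\nu^2}$, which is positive since $\nu e^\nu<1$. The bracket is then zero, and
\begin{equation*}
1-\frac{2\gamma_k\mu}{1+\beta_k}=1-\frac{2\gamma_k\mu(1-C_\nu^2)}{1-C_\nu^2+2\gamma_k\mu}.
\end{equation*}
This contraction factor always lies in $[0,1)$, because $2\gamma_k\mu(1-C_\nu^2)<1-C_\nu^2+2\gamma_k\mu$ holds for any $\gamma_k\mu\ge0$ when $C_\nu<1$. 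Hence $\|x_{k+1}-x^*\|\le\|x_k-x^*\|\le\|x_0-x^*\|$ for all $k$, with no restriction relating $\gamma_k$ and $\mu$.

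With the iterates bounded, Lemma~\ref{lem: adaeg step lower bound} applies and gives $\gamma_k\ge\ugamma$. The map $t\mapsto \frac{2t(1-C_\nu^2)}{1-C_\nu^2+2t}$ is increasing in $t\ge0$, since its derivative is proportional to $(1-C_\nu^2)^2>0$. Therefore replacing $\gamma_k\mu$ by $\ugamma\mu$ only enlarges the factor, giving
\begin{equation*}
\|x_{k+1}-x^*\|^2\le\Bigl(1-\tfrac{2\ugamma\mu(1-\nu^2e^{2\nu})}{2\ugamma\mu+1-\nu^2e^{2\nu}}\Bigr)\|x_k-x^*\|^2 .
\end{equation*}
Unrolling this recursion yields the claim. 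Finite termination of each while loop, and hence well-definedness of $\gamma_k$, is guaranteed by Lemma~\ref{lem: finite while loops}.
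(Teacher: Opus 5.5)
Your proposal is correct and follows the paper's proof essentially step for step. The paper likewise starts from \eqref{EG usm predescent}, justifies it for \adaeg via the line-search acceptance condition, and chooses $\beta=\frac{2\gamma_k\mu}{1-C_\nu^2}$ to eliminate the $\|x_k-\xhatk\|^2$ term. It then uses the fact that the resulting factor is below $1$ to get bounded iterates, and invokes monotonicity of the factor in $\gamma_k$ together with Lemma~\ref{lem: adaeg step lower bound} before unrolling. Your explicit derivative check and the remark on well-definedness via Lemma~\ref{lem: finite while loops} are small details that the paper leaves implicit.
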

\begin{proof}
    We start with \eqref{EG usm predescent} when $\alpha=1$,
    \begin{equation*}
        \|x_{k+1}-x^*\|^2\leq \underbrace{\p{1-\frac{2\gamma_k\mu}{1+\beta}}\|x_k-x^*\|^2}_{\text{Term I}}- \underbrace{\p{1-\nu^2 e^{2\nu} -\frac{2\gamma_k\mu}{\beta}}\|x_k-\xhatk\|^2 }_{\text{Term II}}
    \end{equation*}
    for any $\beta>0$.
    Note that this result holds for \adaeg since the only use of the specific stepsize for \eg in acquiring this bound is in applying \eqref{EG f bound}, which is guaranteed for \adaeg by the backtracking condition. 
    While for \eg we chose $\beta=1$, this required us to later use $\gamma_k\mu\leq \nu$ in the proof to eliminate Term II of  \eqref{EG usm predescent}. Since we have no such guarantee that $\gamma_k\mu \leq \nu$ for the line-search algorithm, we instead choose $\beta=\frac{2\gamma_k\mu}{1-C_\nu^2}$ in order to immediately eliminate this term of \eqref{EG usm predescent}, finding
        \begin{equation}
            \|x_{k+1}-x^*\|^2\leq  \p{1-\frac{2\gamma_k\mu(1-\nu^2 e^{2\nu})}{2\gamma_k\mu+1-\nu^2 e^{2\nu}}}\|x_k-x^*\|^2 \label{eg descent gamma k2}
        \end{equation}

        Noting that $\frac{2\gamma_k\mu(1-\nu^2 e^{2\nu})}{2\gamma_k\mu+1-\nu^2 e^{2\nu}}<1$ since $1-\nu^2 e^{2\nu}<1$, we trivially have that our iterates are bounded by $\|x_{k+1}-x^*\|^2\leq \|x_0-x^*\|^2$. Noting also that $\frac{2\gamma_k\mu(1-\nu^2 e^{2\nu})}{2\gamma_k\mu+1-\nu^2 e^{2\nu}}$ is an increasing function with $\gamma_k$, we can apply Lemma~\ref{lem: adaeg step lower bound} to equation (\ref{eg descent gamma k2}), giving us
            \begin{align*}
         \|x_{k+1}-x^*\|^2\leqfrom{\ref{EG step lower bound}}  \p{1-\frac{2\ugamma\mu(1-\nu^2 e^{2\nu})}{2\ugamma\mu+1-\nu^2 e^{2\nu}}}\|x_k-x^*\|^2
        \end{align*}
        which we can unroll to obtain our final result.
\end{proof}

\subsubsection{\adapeg Convergence Proofs}
\begin{lemma}\label{lem: adapeg step lower app}
    Let $F$ be $1$-symmetric $(L_0,L_1)$-Lipschitz. Then if the iterates of \adapeg with $\nu$ chosen so that $\nu e^\nu<1$ satisfy $R_{k+1}\leq R_0$ for all $k\in 0,1,2,...,$ we have
    \begin{equation}
        \shrink{0.95}{\gamma_k\geq\ugamma=\frac{\nu}{(\lamz+\lamo(\sqrt{2}\|x_0-x^*\|(L_0+L_1\|F(x^*)\|)\exp(\sqrt{2}L_1\|x_0-x^*\|) + \|F(x^*)\|))\exp\p{\lamo M}}}
    \end{equation}
    where $R_k=\|x_k-x^*\|^2+\|x_k-\hat{x}_{k-1}\|^2$, $M=\min\set{\|x_0-x^*\|, \frac{\nu^2 e^\nu}{\lambda^1_{-1}(1-\nu e^\nu)}}$, and $\lamz$ and $\lamo$ are defined in Lemma~\ref{lem: finite while loops}.
\end{lemma}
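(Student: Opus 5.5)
The plan is to mirror the proof of Lemma~\ref{lem:popov step lower bound} for $\alpha=1$. Three changes are needed. The true constants $L_0,L_1$ still control how large $\|F(\xhat_{k-1})\|$ can be. The stepsize denominator now uses the estimates $\lambda_k^0,\lambda_k^1$, which Lemma~\ref{lem: finite while loops} bounds above by $\lamz,\lamo$. The recursion for $d_k=\|x_k-\xhat_{k-1}\|$ must be redone with the estimates in place of $L_1$. We write
\[
\gamma_k=\frac{\nu}{(\lambda_k^0+\lambda_k^1\|F(\xhat_{k-1})\|)\exp(\lambda_k^1 d_k)}
\]
and bound each of the three quantities $\lambda_k^0$, $\lambda_k^1\|F(\xhat_{k-1})\|$ and $\lambda_k^1 d_k$ from above.

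\textbf{Step 1 (operator norm).} From $R_k\le R_0=\|x_0-x^*\|^2$ (using $\xhat_{-1}=x_0$) and \eqref{young ineq}, we get $\|\xhat_{k-1}-x^*\|\le\sqrt2\|x_0-x^*\|$, exactly as in \eqref{popov half from sol}. Applying \eqref{a1 l0l1} with the true constants then reproduces \eqref{popov final f bound a1}, i.e.\ $\|F(\xhat_{k-1})\|\le M_{1,\mathrm{PEG}}$. Together with $\lambda_k^0\le\lamz$ and $\lambda_k^1\le\lamo$ from \eqref{eqn: lam max app}, this bounds the prefactor by $\lamz+\lamo M_{1,\mathrm{PEG}}$. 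This step is routine.

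\textbf{Step 2 (bounding $d_k$).} It suffices to show $d_k\le M$, since then $\exp(\lambda_k^1 d_k)\le\exp(\lamo M)$. The bound $d_k\le\|x_0-x^*\|$ is immediate from $d_k^2\le R_k\le R_0$.

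For the other bound we follow \eqref{popov half step bound intermediate}. By nonexpansiveness \eqref{Nonexpansive}, $d_k\le\gamma_{k-1}\|F(\xhat_{k-1})-F(\xhat_{k-2})\|$. The accepted while-loop test of Algorithm~\ref{alg:AdaptiveAlgo} gives this at most $\nu e^\nu\|\xhat_{k-1}-\xhat_{k-2}\|$. Then \eqref{popov half step diff} gives $d_k\le\nu e^\nu\,(d_{k-1}+\gamma_{k-1}\|F(\xhat_{k-2})\|)$.

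Next we bound $\gamma_{k-1}\|F(\xhat_{k-2})\|\le\nu/\lambda_{k-1}^1\le\nu/\lambda_{-1}^1$. Here we use that the $\lambda^1$ estimates never decrease, because the update factor is at least $\rho-1>1$. Unrolling the recursion with $d_0=0$ and $\nu e^\nu<1$ yields $d_k\le\nu^2e^\nu/(\lambda_{-1}^1(1-\nu e^\nu))$. Taking the smaller of the two bounds gives $d_k\le M$.

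\textbf{Step 3 (assemble).} Substituting the bounds from Steps 1 and 2 into the stepsize formula gives the claimed $\ugamma$.

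The main obstacle is Step 2. In Lemma~\ref{lem:popov step lower bound} the argument gave $\lambda_k^1 d_k\le\nu^2e^\nu/(1-\nu e^\nu)$ because the same $L_1$ appears in both the stepsize and the bound on $d_k$. Here the recursion involves $1/\lambda_{-1}^1$ while the exponent involves $\lambda_k^1\le\lamo$, so the cancellation is lost. That mismatch is why the statement carries the cruder factor $\exp(\lamo M)$ with the $\min$.

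Some care is needed at $k=0$ and at the first accepted step, where $\xhat_{-2}$ is undefined. There the bound holds directly because $d_0=0$. We must also check that the while-loop test is invoked with $\tilde x_{k-1}=\xhat_{k-2}$, so that the accepted inequality is exactly the one used in \eqref{popov half step bound intermediate}.
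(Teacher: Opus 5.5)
Your proposal is correct and follows the paper's proof essentially step for step. It has the same three ingredients: the bound $\|F(\xhat_{k-1})\|\le M_{1,\mathrm{PEG}}$ via $\|\xhat_{k-1}-x^*\|\le\sqrt{2}\|x_0-x^*\|$ and \eqref{a1 l0l1} with the true constants; the two bounds on $d_k$, namely the crude one from $d_k^2\le R_k\le R_0$ and the unrolled recursion driven by the accepted line-search test and $\lambda^1_{k-1}\ge\lambda^1_{-1}$; and the final substitution $\lambda_k^0\le\lamz$, $\lambda_k^1\le\lamo$ from Lemma~\ref{lem: finite while loops}.
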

\begin{proof}
    We are largely able to reuse our proof of Lemma~\ref{lem:popov step lower bound}. From our assumption that $R_{k+1}\leq R_0$, we have
    \begin{equation}
        \|x_k-x^*\|^2+\|x_k-\hat{x}_{k-1}\|^2\leq \|x_0-x^*\|^2 \label{adapeg step lower bound 1}
    \end{equation}
 From the same reasoning as in the proof of Lemma~\ref{lem:popov step lower bound}, we have that Equation \ref{popov final f bound a1} applies to \adapeg:
    \begin{equation}
       \|F(\xhat_{k-1})\|\leq \sqrt{2}\|x_0-x^*\|(L_0+L_1\|F(x^*)\|)\exp(\sqrt{2}L_1\|x_0-x^*\|) + \|F(x^*)\| \label{adapeg f bound}
    \end{equation}

    Lastly, we must bound $\|x_k-\xhat_{k-1}\|$. We derive two distinct bounds and take the minima between them. For the first bound, we can use a similar telescoping argument to the proof of Lemma~\ref{lem:popov step lower bound}. Specifically, we again have,
    \begin{align}
        \|x_k-\xhat_{k-1}\| &\leqfrom{\ref{Nonexpansive}} \gamma_{k-1}\|F(\xhat_{k-1})-F(\xhat_{k-2})\| \notag\\
        &\leq \nu e^\nu\|\xhat_{k-1}-\xhat_{k-2}\| \notag\\
        &\leqfrom{\ref{popov half step diff}} \nu e^\nu \p{\|x_{k-1}-\xhat_{k-2}\|+\gamma_{k-1}\|F(\xhat_{k-2})\|} \notag
    \end{align}
    where the second inequality is a result of our backtracking condition. Then since $\lambda_k^1\geq \lambda_{-1}^1$ for all $k$, we have
    \begin{align*}
            \gamma_{k-1}\|F(\xhat_{k-2})\| &= \frac{\nu \|F(\xhat_{k-2})\|}{(\lambda_{k-1}^0+\lambda_{k-1}^1\|F(\xhat_{k-2})\|)\exp(\lambda_{k-1}^1\|x_{k-1}-\xhat_{k-2}\|)} \\
            &\leq \frac{\nu}{\lambda_{-1}^1}
    \end{align*}
    We thus obtain the recursion of 
    \begin{equation*}
        \|x_k-\xhat_{k-1}\|\leq \nu e^\nu \p{\|x_{k-1}-\xhat_{k-2}\|+\frac{\nu}{\lambda_{-1}^1}}
    \end{equation*}
    Unwinding this in the same manner as in the proof of Lemma~\ref{lem:popov step lower bound}, we find
    $$\|x_k-\xhat_{k-1}\|\leq \frac{\nu^2 e^\nu}{\lambda^1_{-1}(1-\nu e^\nu)}$$.

    For the second bound, we use a simple but somewhat more crude result from \eqref{adapeg step lower bound 1} of
    \begin{equation*}
        \|x_k-\hat{x}_{k-1}\|^2\leq \|x_0-x^*\|^2 
    \end{equation*}
     This bound in particular may be tighter when we use a small initial estimate $\lambda_{-1}^1$ for $L_1$.

    Combining the above two bounds, we define $M=\min\set{\|x_0-x^*\|, \frac{\nu^2 e^\nu}{\lambda^1_{-1}(1-\nu e^\nu)}}$ and thus obtain
    \begin{equation}
        \|x_k-\hat{x}_{k-1}\|\le M  \label{adapeg halfstep bound}
    \end{equation}

Putting all of our bounds together, we conclude that for all $k$,
\begin{align*}
    \gamma_k &=\frac{\nu}{({\lambda}_k^0+{\lambda}_k^1\|F(\xhat_{k-1})\|)\exp({\lambda}_k^1\|x_k-\xhat_{k-1}\|)}\\
    &\geqfrom{\ref{eqn: lam max app}}\frac{\nu}{(\lamz+\lamo\|F(\xhat_{k-1})\|)\exp(\lamo\|x_k-\xhat_{k-1}\|)} \\
     &\geqfrom{\ref{adapeg halfstep bound}}\frac{\nu}{(\lamz+\lamo\|F(\xhat_{k-1})\|)\exp\p{\lamo M}} \\
    &\geqfrom{\ref{adapeg f bound}} \frac{\nu}{(\lamz+\lamo(\sqrt{2}\|x_0-x^*\|(L_0+L_1\|F(x^*)\|)\exp(\sqrt{2}L_1\|x_0-x^*\|) + \|F(x^*)\|))\exp\p{\lamo M}}
\end{align*}

\end{proof}

\begin{corollary}
\label{cor: adapeg convergence app}
         Let $F$ be $1$-symmetric $(L_0,L_1)$-Lipschitz and monotone. Additionally, let $\xbark=\frac{\sum_{k=0}^{K-1}\gamma_k\xhatk}{\sum_{k=0}^{K-1}\gamma_k}$ and $x^*\in U^*$. Then the averaged iterates $\xbark$ of \adapeg with $\nu$ chosen so that $\nu e^\nu\leq\frac{1}{2}$ satisfy
        \begin{equation*}
            \gap{\xbark}\leq \frac{(3+2\sqrt{2})\|x_0-x^*\|^2}{2\ugamma K}
        \end{equation*}
       where $\ugamma$ is defined in Lemma~\ref{lem: adapeg step lower app} and $B=\set{y\in U:\|y-x^*\|^2\leq 2\|x_0-x^*\|^2}$. Further, $\xbark\in B$. 
    \end{corollary}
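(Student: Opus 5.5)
The plan mirrors the argument used for Corollary~\ref{cor: adaeg convergence app}: rerun the proof of Theorem~\ref{thm: popov monotone} verbatim, changing only the two places where the specific \peg stepsize enters. Those two places are the operator-difference bound \eqref{popov f bound} and the stepsize lower bound from Lemma~\ref{lem:popov step lower bound}.

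\textbf{Step 1: the descent inequality.} The derivation of \eqref{popov predescent} uses the stepsize only through \eqref{popov f bound}, that is, through
\[
\gamma_k\|F(\xhatk)-F(\xhat_{k-1})\|\le C_\nu\|\xhatk-\xhat_{k-1}\|
\quad\text{with } C_\nu=\nu e^\nu .
\]
For \adapeg this is exactly the exit condition of the while loop in Algorithm~\ref{alg:AdaptiveAlgo}. By Lemma~\ref{lem: finite while loops} the loop terminates, so the condition holds at every accepted step. Lemma~\ref{lem:unified predescent} is purely algebraic and holds for any positive stepsizes. Hence \eqref{popov predescent}, and in particular \eqref{popov monotone predescent} with $\mu=0$, holds for \adapeg.

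\textbf{Step 2: bounded iterates and the stepsize lower bound.} Take $y=x^*$ and use $\nu e^\nu\le\tfrac12$, so that $1-4C_\nu^2\ge0$. This gives $R_{k+1}(x^*)\le R_k(x^*)\le R_0(x^*)=\|x_0-x^*\|^2$, using $\xhat_{-1}=x_0$. This is precisely the hypothesis of Lemma~\ref{lem: adapeg step lower app}, which then yields $\gamma_k\ge\ugamma$ with the $\ugamma$ defined there. Young's inequality \eqref{young ineq}, as in \eqref{popov half from sol}, gives $\|\xhatk-x^*\|^2\le 2\|x_0-x^*\|^2$. So every $\xhatk$ lies in $B$, and by convexity of $B$ the average $\xbark$ lies in $B$ as well.

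\textbf{Step 3: telescoping to the gap bound.} For general $y\in B$, telescope \eqref{popov monotone predescent} to obtain
\[
\sum_{k=0}^{K-1}\gamma_k\inprd{F(y)}{\xhatk-y}\le \tfrac12\|x_0-y\|^2 .
\]
Then take $\sup_{y\in B}$ and apply \eqref{Gap lemma}, which is valid because $\xbark\in B$ and $\gamma_k\ge\ugamma$. Finally, bound $\sup_{y\in B}\|x_0-y\|^2\le(3+2\sqrt2)\|x_0-x^*\|^2$ via Young's inequality with $\beta=1/\sqrt2$, exactly as in \eqref{popov sup bound}. This gives the claimed rate.

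The only point requiring care is Step~2. One has to check that the argument is not circular: the descent inequality must hold before the lower bound on $\gamma_k$ is invoked. It does, because the descent needs only the accepted backtracking condition, while the lower bound is used only after $R_k\le R_0$ has been established.
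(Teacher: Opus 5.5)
Your proposal is correct and follows the paper's own proof: the paper also notes that the \peg stepsize enters the proof of Theorem~\ref{thm: popov monotone} in only two places. The first is \eqref{popov f bound}, which for \adapeg is guaranteed by the accepted backtracking condition; the second is the stepsize lower bound, which is replaced by Lemma~\ref{lem: adapeg step lower app}. Your check that $R_k\le R_0$ follows from the descent inequality alone, before the lower bound on $\gamma_k$ is invoked, is a useful detail the paper leaves implicit.
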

    \begin{proof}
        Note that in the proof of Theorem~\ref{thm: popov monotone}, the specific stepsize choices for \peg are used in only two locations. The first is in showing that Equation~ \ref{popov f bound} hold, and the second is in lower bounding the stepsizes $\gamma_k$. With respect to the first use, Equations \ref{popov f bound} is the backtracking conditions of \adapeg, guaranteeing that this equation holds at each iterate. With respect to the stepsize lower bounds, Lemma~\ref{lem: adapeg step lower app} provides a new lower bound. Thus, we obtain identical results to Theorem~\ref{thm: popov monotone} but with a different stepsize lower bound.
     \end{proof}  

     \begin{theorem}
         Let $F$ be $1$-symmetric $(L_0,L_1)$-Lipschitz and $\mu$-strongly monotone. Then the iterates of \adapeg with $\nu$ chosen so that $\nu e^\nu<\half$ satisfy
        \begin{equation*}
           R_k\leq \p{1-\frac{2\ugamma\mu(1-4\nu^2 e^{2\nu})}{2\ugamma\mu+1-4\nu^2 e^{2\nu}}}^kR_0
        \end{equation*}
        where $R_k=\|x_k-x^*\|^2+\|x_k-\xhat_{k-1}\|^2$  and $\ugamma$ is defined in Lemma~\ref{lem: adapeg step lower app}.
     \end{theorem}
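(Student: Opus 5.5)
The proof follows the template of the \adaeg strongly monotone corollary, transplanted to the Lyapunov function $R_k$. First, the \peg descent inequality \eqref{popov predescent} still holds for \adapeg. In its derivation, the stepsize rule enters only through \eqref{popov f bound}, i.e.\ $\gamma_k\|F(\xhatk)-F(\xhat_{k-1})\|\le C_\nu\|\xhatk-\xhat_{k-1}\|$ with $C_\nu=\nu e^\nu$, and this is exactly the acceptance test of the while loop. Lemma~\ref{lem: finite while loops} guarantees every iteration terminates with an accepted step. Taking $y=x^*$ and using \eqref{VIP Solution} to drop the inner product term, we get, for every $\beta>0$,
\begin{align*}
R_{k+1}\le \Bigl(1-\tfrac{2\gamma_k\mu}{1+\beta}\Bigr)R_k-\Bigl(1-4C_\nu^2-\tfrac{2\gamma_k\mu}{\beta}\Bigr)\|x_k-\xhatk\|^2+\Bigl(4C_\nu^2-1+\tfrac{2\gamma_k\mu}{1+\beta}\Bigr)\|x_k-\xhat_{k-1}\|^2 .
\end{align*}

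We cannot rely on $\gamma_k\mu\le\nu$, since the estimates $\lambda_k^0,\lambda_k^1$ may be smaller than $L_0,L_1$. So, as for \adaeg, we choose $\beta$ adaptively: $\beta=\frac{2\gamma_k\mu}{1-4C_\nu^2}$, which is positive because $\nu e^\nu<\half$. This kills the $\|x_k-\xhatk\|^2$ term. With $c:=1-4\nu^2e^{2\nu}$, a direct computation gives
\[
\frac{2\gamma_k\mu}{1+\beta}=\frac{2\gamma_k\mu c}{2\gamma_k\mu+c}=:\rho_k .
\]
The remaining coefficient on $\|x_k-\xhat_{k-1}\|^2$ is $\rho_k-c$, and
\[
\rho_k-c=-\frac{c^2}{2\gamma_k\mu+c}\le 0,
\]
so that term can be dropped. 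This yields $R_{k+1}\le(1-\rho_k)R_k$ with $0<\rho_k<c<1$.

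Since $\rho_k\in(0,1)$, we get $R_{k+1}\le R_k\le R_0$ for all $k$, by induction on $k$. This is exactly the hypothesis of Lemma~\ref{lem: adapeg step lower app}, so $\gamma_k\ge\ugamma$. The map $t\mapsto\frac{2t\mu c}{2t\mu+c}$ is increasing in $t$, hence $\rho_k\ge\frac{2\ugamma\mu c}{2\ugamma\mu+c}$. Unrolling the recursion gives the claimed rate.

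The main obstacle is the $\|x_k-\xhat_{k-1}\|^2$ term. In the non-adaptive proof it was controlled using $\gamma_k\mu\le\nu$, which is unavailable here. The check $\rho_k-c\le0$ is what lets the adaptive $\beta$ handle both error terms simultaneously, so it must be verified carefully. A secondary point is the circularity between boundedness and the stepsize lower bound. It is resolved because the contraction $R_{k+1}\le(1-\rho_k)R_k$ holds for any accepted $\gamma_k>0$, before any lower bound is invoked.
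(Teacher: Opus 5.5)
Your proof is correct and follows the paper's argument step for step. You make the same choice $\beta=\frac{2\gamma_k\mu}{1-4\nu^2e^{2\nu}}$ in \eqref{eqn: popov usm prebeta descent}, and the leftover $\|x_k-\xhat_{k-1}\|^2$ coefficient is $-\frac{c^2}{2\gamma_k\mu+c}\le 0$, which the paper writes as $(4\nu^2e^{2\nu}-1)\bigl(1-\frac{2\gamma_k\mu}{2\gamma_k\mu+c}\bigr)$. You then use the same monotonicity-in-$\gamma_k$ step to invoke Lemma~\ref{lem: adapeg step lower app}, so the only quibble is cosmetic: your $\rho_k$ clashes with the line-search factor $\rho$ of Algorithm~\ref{alg:AdaptiveAlgo}.
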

     \begin{proof}
        We begin with \eqref{eqn: popov usm prebeta descent} in the case of $\alpha=1$:
     \begin{align*}
         R_{k+1} &\leq {\p{1-\frac{2\gamma_k\mu}{1+\beta}}R_k(y)}-\p{1-4\nu^2 e^{2\nu}-\frac{2\gamma_k\mu}{\beta}}\|x_k-\xhatk\|^2 \notag\\
        &\qquad+{\p{4\nu^2 e^{2\nu}-1+\frac{2\gamma_k\mu}{1+\beta}}\|x_k-\xhat_{k-1}\|^2}
     \end{align*}
        Note that this result holds for \adapeg since the only use of the specific stepsize for \peg in acquiring this bound is in applying \eqref{popov f bound}, which is guaranteed for \adapeg by the backtracking condition. Similar to our strongly monotone convergence proof for \adaeg, we choose $\beta=\frac{2\gamma_k\mu}{1-4C^2_\nu}$ to eliminate the $\|x_k-\xhatk\|$ term in \eqref{eqn: popov usm prebeta descent}, finding
    \begin{align}
        R_{k+1}\leqfrom{\ref{eqn: popov usm prebeta descent}} \p{1-\frac{2\gamma_k\mu(1-4\nu^2 e^{2\nu})}{2\gamma_k\mu+1-4\nu^2 e^{2\nu}}}R_k + \p{4\nu^2 e^{2\nu} - 1}\p{1-\frac{2\gamma_k\mu}{2\gamma_k\mu+1-4\nu^2 e^{2\nu}}}\|x_k-\xhat_{k-1}\|^2  \label{eqn: popov usm big nu}
    \end{align}
    Then since $\nu e^\nu<\frac{1}{2}$ and $\frac{2\gamma_k\mu}{2\gamma_k\mu+1-4\nu^2 e^{2\nu}}<1$, the second term of \eqref{eqn: popov usm big nu} is negative, allowing us to drop it from the inequality and obtain
    \begin{equation}
        R_{k+1}\leq \p{1-\frac{2\gamma_k\mu(1-4\nu^2 e^{2\nu})}{2\gamma_k\mu+1-4\nu^2 e^{2\nu}}}R_k \label{popov descent gamma k2}
    \end{equation}
    Noting that $\frac{2\gamma_k\mu(1-4\nu^2 e^{2\nu})}{2\gamma_k\mu+1-4\nu^2 e^{2\nu}}<1$ since $1-4\nu^2 e^{2\nu}<1$, we trivially have that $R_{k+1}\leq R_0$ for all $k$. Noting also that $\frac{2\gamma_k\mu(1-4\nu^2 e^{2\nu})}{2\gamma_k\mu+1-4\nu^2 e^{2\nu}}$ is an increasing function with $\gamma_k$, we can apply Lemma~\ref{lem: adapeg step lower app} to (\ref{popov descent gamma k2}), giving us
    \begin{align*}
         R_{k+1}\leqfrom{\ref{popov step lower bound}}\p{1-\frac{2\ugamma\mu(1-4\nu^2 e^{2\nu})}{2\ugamma\mu+1-4\nu^2 e^{2\nu}}}R_k 
    \end{align*}
        Unrolling this inequality gives us our final result.  
     \end{proof}

\newpage
\section{Additional Details on Numerical Experiments}\label{app: experiment section}
In this section, we provide additional details on each of our numerical experiments. Code for the numerical experiments can be found in the supplemental material.

\subsection{Calculation of the Gap Function for Monotone Problems}\label{sec: app gap}
In this subsection, we provide additional details on our calculation of the restricted gap function for our experiments on monotone operators and demonstrate our results experimental hold with a second merit function. 

For both the monotone min-max optimization experiment and the traffic equilibrium experiment, we numerically calculated the value of $\texttt{Gap}_{F,U}{\xbark}$ using the \texttt{scipy.optimize SLSQP} solver.  While providing reasonable results and being consistent with our theoretical findings, we acknowledge that since the $\texttt{Gap}_{F,U}$ function is a nonconvex optimization problem for nonlinear operators, these numerical optimizations may land on local rather than global maxima. Consequently, we provide here experimental results for each of the monotone problems using a second merit function: the \textit{primal gap} function $\gapp$, defined as follows \citep{facchinei_finite-dimensional_2003}:
\begin{equation}
    \gapp(x)=\sup_{y\in U} \inprd{F(x)}{x-y}
\end{equation}
This linear optimization problem over a convex set is simpler to calculate than the restricted gap function, and for monotone operators upper bounds $\texttt{Gap}_{F,U}$ since
\begin{align*}
    \texttt{Gap}_P(x)&=\sup_{y\in U} \inprd{F(x)}{x-y} \\
    &=\sup_{y\in U}\p{\inprd{F(x)-F(y)}{x-y}+ \inprd{F(y)}{x-y}} \\
     &\geqfrom{\ref{muSC}} \sup_{y\in U}\inprd{F(y)}{x-y} \\
     &=\texttt{Gap}_{F,U}(x)
\end{align*}
As can be seen in Figures~\ref{fig:primal traffic} and \ref{fig:primal minimax}, both the traffic equilibrium problem and the monotone min-max problems show the same qualitative behavior in the primal gap function as that shown for the restricted gap function in the main text.

\begin{figure}[htbp]
    \centering
    \begin{minipage}[b]{0.48\textwidth}
        \centering
        \includegraphics[width=\textwidth]{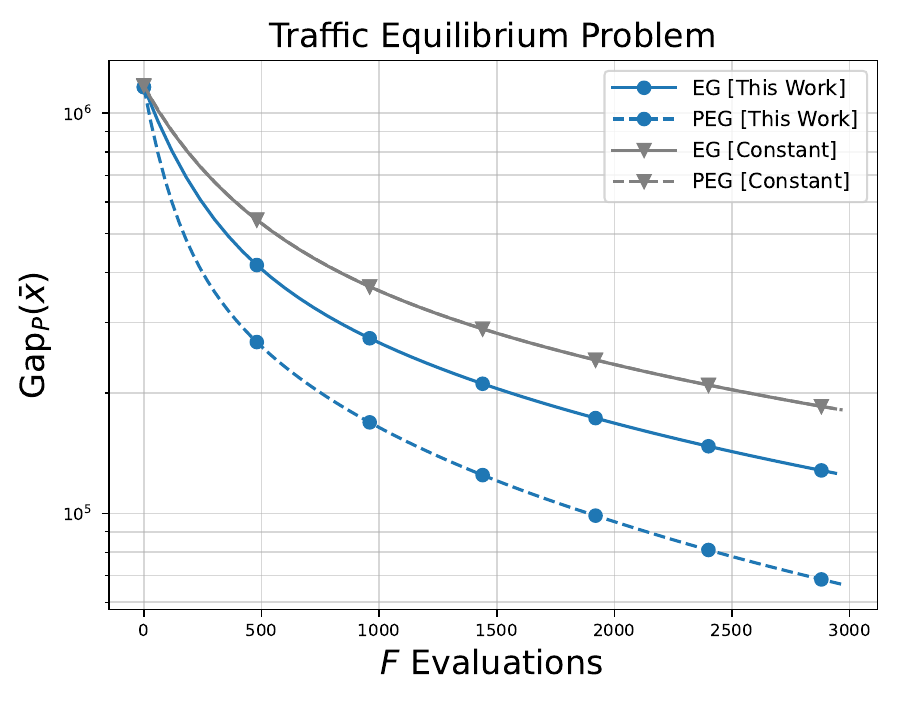}
        \caption{Convergence of the primal gap function for the traffic equilibrium problem experiment.}
        \label{fig:primal traffic}
    \end{minipage}
    \hfill
    \begin{minipage}[b]{0.48\textwidth}
        \centering
        \includegraphics[width=\textwidth]{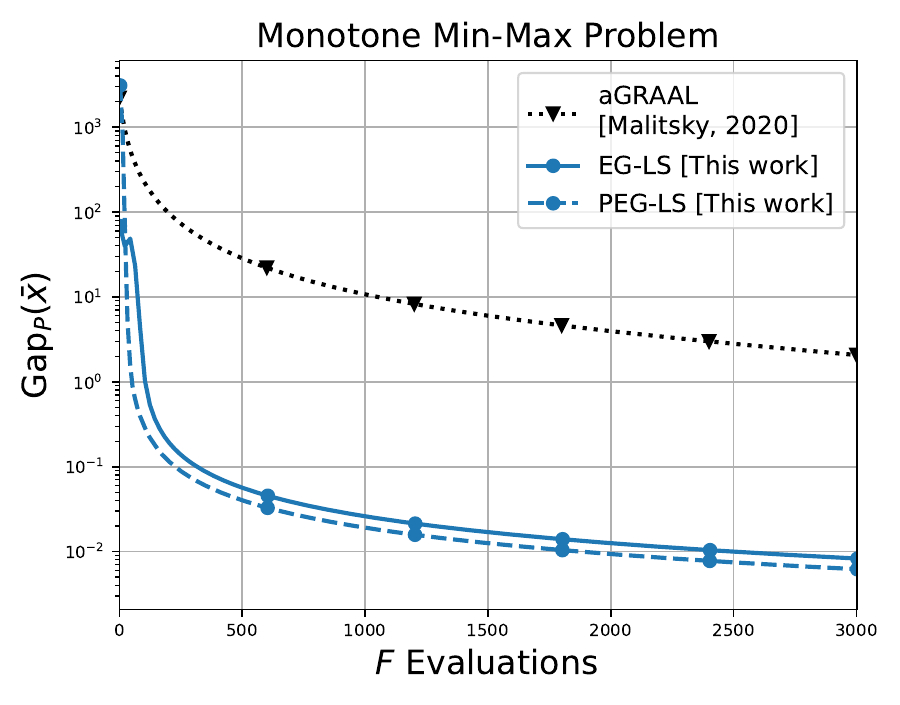}
        \caption{Convergence of the primal gap function for the monotone min-max problem experiment.}
        \label{fig:primal minimax}
    \end{minipage}
\end{figure}

\subsection{Traffic Equilibrium Problem}\label{app:traffic}

In this subsection, we provide a more complete description of the traffic equilibrium problem, the specific traffic network we considered for our experiment, and derive the $(L_0,L_1)$-Lipschitz constants in the case of the BPR cost function.

The traffic equilibrium network problem was first framed as a VI by \cite{dafermos_traffic_1980}. Suppose we have some graph $G(V,E)$, where the vertices represent locations and edges represent routes between each of those locations. We then have $n$ origin-destination (OD) pairs with set``travel demands'' between them. We represent these demands as a single vector $d\in \R^n$. Now say we have $p$ paths connecting these OD pairs, and we define the path-demand matrix $P\in \R^{p\times n}$ as the matrix with entries
$$P_{ij}=\begin{cases}
    1\text{ if path $i$ satisfies OD demand $j$} \\
    0\text{ otherwise}
\end{cases}$$
Letting $f\in\R^p$ be the flows on each of these paths, we require $P^Tf=d$. To calculate the path flow, we define the traffic load on each individual edge as $x\in \R^{|E|}$. Defining the arc-chain incidence matrix as $A\in\R^{|E|\times p}$ such that
$$A_{ij}=\begin{cases}
    1\text{ if edge $i$ is in path $j$}\\ 0\text{ otherwise}
\end{cases}$$
we have that $x=Af$. Lastly, each of these edges has a different cost function $F_i(x)$ that represents how the cost of using the connection scales with the traffic flow on it and possibly several of the other edges. We concatenate these costs into what will become the operator of the VI representing the traffic flow problem, $F(x)$. Then solving the VI
\begin{equation}
    \text{Find $x^*$ such that $\inprd{F(x^*)}{x-x^*}\geq 0$ for all $x$ that satisfy $x=Af, P^Tf=d,f\geq0$} 
\end{equation}
gives us the equilibrium solution for the traffic flow model.
\begin{figure}[htbp]
    \centering
    \begin{tikzpicture}[
        scale=1.2,
        mynode/.style={
            circle,
            draw=black,
            thick,
            minimum size=0.8cm,
            font=\small\bfseries,
            fill=white
        },
        myarrow/.style={
            ->,
            >={Stealth[length=3mm]},
            thick,
            draw=black
        },
        label style/.style={
            font=\scriptsize,
            text=black,
            inner sep=2pt,
            align=center
        }
    ]

    \node[mynode] (1) at (0, 4) {1};
    \node[mynode] (4) at (5, 4) {4};
    \node[mynode] (2) at (0, 0) {2};
    \node[mynode] (5) at (5, 0) {5};
    \node[mynode] (3) at (2.5, 2) {3};

    \draw[myarrow] (1) -- node[label style, sloped, above] {$t_1=100, c_1=100$} (4);
    \draw[myarrow] (1) -- node[label style, sloped, above] {$t_2=120, c_2=120$} (3);
    \draw[myarrow] (1) -- node[label style, left] {$t_3=80, c_3=80$} (2);
    \draw[myarrow] (3) -- node[label style, sloped, above] {$t_4=150, c_4=150$} (4);
    \draw[myarrow] (3) -- node[label style, sloped, below] {$t_5=70, c_5=70$} (2);
    \draw[myarrow] (4) -- node[label style, right] {$t_6=140, c_6=210$} (5);
    \draw[myarrow] (3) -- node[label style, sloped, below] {$t_7=150, c_7=150$} (5);
    \draw[myarrow] (2) -- node[label style, below] {$t_8=160, c_8=250$} (5);

    \end{tikzpicture}
    \caption{Traffic network for traffic equilibrium experiment with BPR cost function parameters.}
    \label{fig:traffic_network_1}
\end{figure}
We followed the standard model where $F_i(x)=F_i(x_i)$. The most commonly used cost function is the BPR cost function \cite{bureau1964traffic}, defined most generally as
\begin{equation}
    F_i(x)=t_i\p{1+a\p{\frac{x_i}{c_i}}^\beta} \label{eqn: bpr app}
\end{equation}
where $t_i$ represents the time to traverse the road in the absence of traffic, $c_i$ is the capacity of the road, and $a$ represents how quickly congestion effects come into effect. While the values of $a=0.15$ and $\beta=4$ reported in the main text are most common, values for $\beta$ from 2 to 12 have also been used in practice \citep{conical_traffic}.

In the standard model, the Jacobian of the operator is a diagonal matrix with entries
\begin{equation}
    \frac{\partial F_i}{\partial x_i}=\frac{t_ia\beta}{c_i}\left(\frac{x_i}{c_i}\right)^{(\beta-1)}
\end{equation}
Since $x_i\geq 0$, we have that the lowest eigenvalue of the Jacobian $\J\geq 0$, making this a monotone problem. Additionally, we have from Equation~\ref{eqn: bpr app} that $\p{\frac{x_i}{c_i}}^\beta=\p{\frac{F_i(x_i)}{t_i}-1}\frac{1}{a}$
\begin{align*}
    \|\J\|&\leq \max_i \frac{t_ia\beta}{c_i}\p{\frac{x_i}{c_i}}^{(\beta-1)} \\
    &= \max_i\frac{t_ia\beta}{c_i} \p{\p{\frac{F_i}{t_i}-1}\frac{1}{a}}^{\nicefrac{\beta-1}\beta}\\
    &=\max_i \frac{\beta a^{1/\beta} t_i}{c_i}\p{\frac{F_i(x)}{t_i}-1}^{\nicefrac{(\beta-1)}{\beta}}\\
&\leq \max_i \frac{\beta\, a^{1/\beta}\, t_i}{c_i}\p{\frac{F_i(x)}{t_i}}^{\nicefrac{(\beta-1)}{\beta}}\\
&=\max_i \frac{\beta a^{1/\beta} t_i^{1/\beta}}{c_i} F_i(x)^{\nicefrac{(\beta-1)}{\beta}}\\
&\leq \beta a^{1/\beta}\max_i\p{\frac{t_i^{1/\beta}}{c_i}} \|F(x)\|^{\nicefrac{(\beta-1)}{\beta}}
\end{align*}
making this a $\frac{\beta-1}{\beta}$-symmetric $(0, \beta a^{1/\beta}\max_i\p{\nicefrac{t_i^{1/\beta}}{c_i}})$-Lipschitz operator. For this paper, we specifically used the originally proposed values of $a=0.15,\beta=4$ \citep{bureau1964traffic}.

We used a specific traffic network inspired by \cite{traffic_problem_source} with parameters for the BPR cost function shown in Figure~\ref{fig:traffic_network_1}. Overall, this network with our choice of $a$ and $\beta$ makes $F$ $\frac{3}{4}$-symmetric $(0,0.1)$-Lipschitz. The only OD-pair is $(1,5)$ with a demand of $1000$. We use this constraint to determine the $L$-Lipschitz constant as $\|J\|\leq \max_i \frac{t_ia\beta}{c_i}\p{\frac{x_i}{c_i}}^{(\beta-1)}\leq \max_i \frac{t_ia\beta}{c_i}\p{\frac{1000}{c_i}}^{(\beta-1)}\approx1750=L$. We used these theoretical values for $L$, $\alpha$, and $L_1$, and we increased $L_0$ to $10^{-8}$ to ensure numerical stability. Lastly, we set $\nu$ to the maximum admissible values given by Theorems~\ref{thm: EG monotone} and \ref{thm: popov monotone} ($1$ for \eg and $\half$ for \peg).

\subsection{Strongly Monotone Problem}
In this subsection, we prove the strong monotonicity of $F(x)=\p{x_1^2,x_2^2}$ when the constraint set $U$ is the disk centered at $\p{\half,\half}$ with radius  $\frac{1}{4}$ and find the solution to the VI. We consider the more general case of  $x$ constrained to a disk centered at $(c,c)$, $c>0$ with radius $r$, $0<r<c$.  The Jacobian of this operator is
\begin{equation*}
    \J=\begin{pmatrix}
        2x_1 & 0\\ 0 &2x_2
    \end{pmatrix}
\end{equation*}
which has as its lowest eigenvalue $2\min\set{x_1,x_2}$. Due to our constraint set, $x_1$ and $x_2$ are never below $c-r>0$, making this operator $2(c-r)$-strongly monotone on $U$. Similarly, the largest eigenvalue of $\J$ is $2\max\set{x_1,x_2}$, which is upper bounded by $2(c+r)$. Consequently, on $U$ the operator is $L$-Lipschitz with Lipschitz constant $2(c+r)$, though in $\R^d$, there is no finite Lipschitz constant \citep{choudhury2025extragradient}. Then since $F$ is strongly monotone on $U$, the VI has a unique solution, and using a geometric argument, we can readily see that $x^*=\begin{pmatrix}
    c-\frac{r\sqrt{2}}{2}, c-\frac{r\sqrt{2}}{2}
\end{pmatrix}^\top$. Specifically, note that $x^*$ is on the boundary of $U$, and $F(x^*)$ is directed towards the center of the disk, which is the inward normal direction. Thus, we have $\inprd{F(x^*)}{x-x^*}\geq0$ for all $x\in U$ (see Figure~\ref{fig: solution_geo}).

\begin{figure}[h]
    \centering
    \includegraphics[width=0.3\linewidth]{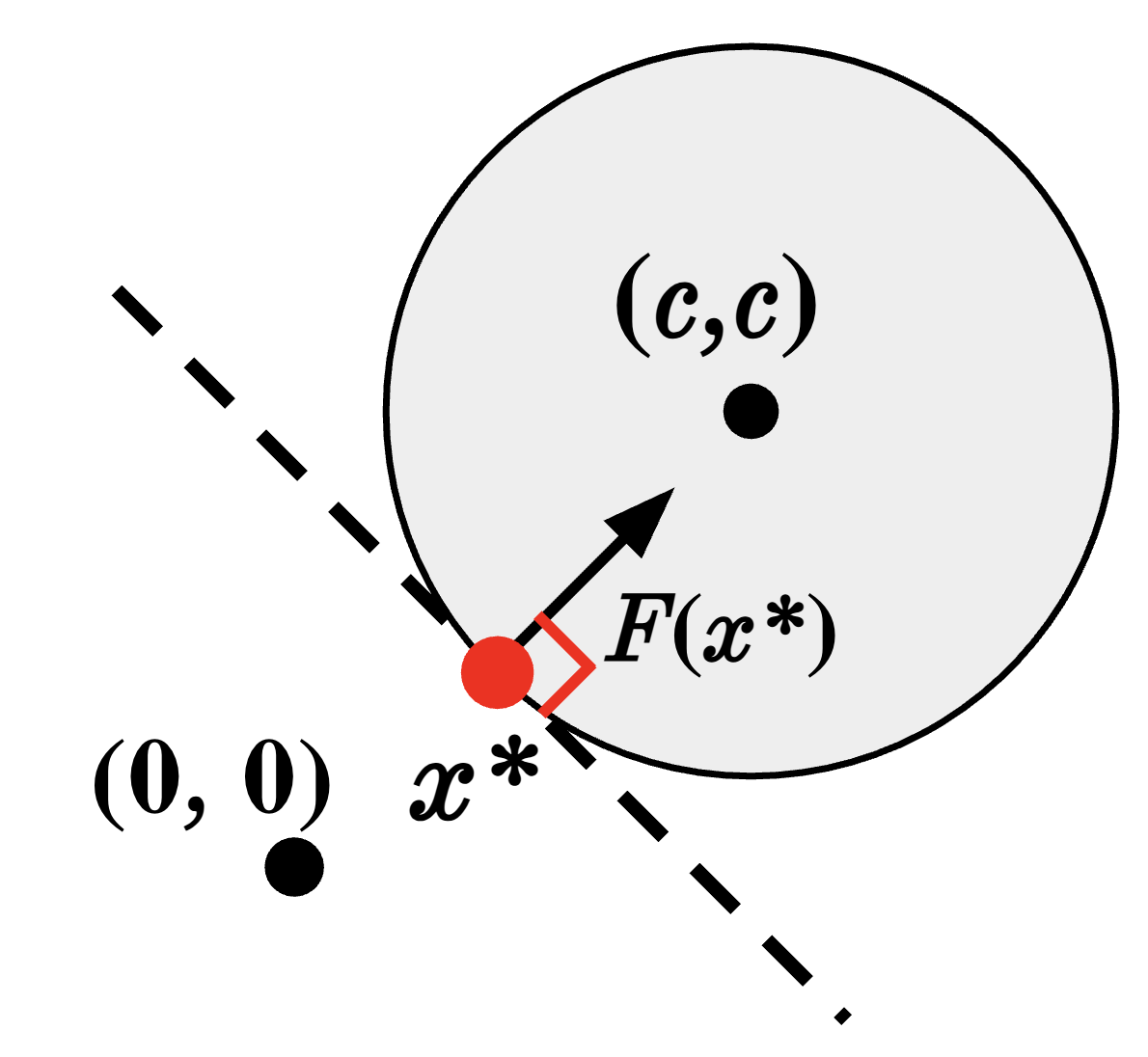}
    \caption{Solution to Strongly Monotone VI with $\alpha\in (0,1)$}
    \label{fig: solution_geo}
\end{figure}

For our experiments, we set $\nu$ near the maximally admissible values according to Theorems~\ref{thm: EG sm} and \ref{thm: popov sm} ($0.61$ for \eg and $0.30$ for \peg).

\subsection{Monotone min-max Problem}\label{app:minimax}
In this subsection, we provide additional details on the monotone min-max problem for our numerical experiment. 

\subsubsection{VI Properties}
From Example 3 of \cite{choudhury2025extragradient}, we have that the min-max problem
\begin{equation}
    \min_{w_1 \in U_1} \max_{w_2 \in U_2}  \frac{1}{4} \|w_1\|^4+ w_1^{\top} \B w_2 - \frac{1}{4}\|w_2\|^4
\end{equation}
is $1$-symmetric $\left(2\tau_0 + \|\textbf{M}\|, 2^{\frac{17}{9}} \tau_1 \right)$-Lipschitz,
for any choice of $\tau_1 > 0$ and $\tau_0 =  \frac{4}{\tau_1^2}, \textbf{M} = \begin{bmatrix}
            0 & \B \\
            -\B^\top & 0
        \end{bmatrix}$. The specific operator for the problem is
        \begin{equation*}
            F(x)=\begin{pmatrix}
                \|w_1\|^2 w_1 + \B w_2 \\
                \|w_2\|^2w_2 - \B^\top w_1
            \end{pmatrix}
        \end{equation*}
which has corresponding Jacobian
\begin{equation*}
  \J=  \begin{pmatrix}
        {\|w_1\|^2I + 2{w_1 w_1^\top}} & \B \\
        -\B^\top &   {\|w_2\|^2I + 2{w_2 w_2^\top}}
    \end{pmatrix}
\end{equation*}

Noting that the off-diagonal is skew-symmetric, we have $\J\succeq \min_{(w_1,w_2)\in U_1\times U_2}\set{\|w_1\|^2, \|w_2\|^2}$, providing us with the strong monotonicity parameter $\mu$. 

Taking $U_1$ and $U_2$ to be balls of radius $r$ in $\R^{50}$ centered at the ones vectors, when $r=\sqrt{50}$, $w_1=w_2=0$ is in $U_1\times U_2$, leading to $\mu=0$ (a monotone problem). However, when $r=3$, we have that $\|w_1\|^2$ and $\|w_2\|^2$ are lower bounded by $(\sqrt{50}-3)^2\approx16.57$, making the problem $16.57$-strongly monotone.

\vspace{-3mm}
\subsubsection{Hyperparameter Tuning}
For \adaeg and \adapeg, we set the initial Lipschitz estimates to the arbitrary but small value of $\lambda_{-1}^0=\lambda_{-1}^1=0.1$. We then set the initial stepsize parameter for the \algname{EG Backtracking} method from \cite{pmlr-v235-vankov24a} so that the first attempted $\gamma_k$ is the same as that from \adaeg. 

For the line-search scaling parameter, \algname{EG Backtracking} \citep{pmlr-v235-vankov24a} uses a parameter $q\in(0,1)$. We ran their algorithm for each $q\in\set{0.1, 0.2,..., 0.9}$, and Figure~\ref{fig:minimax} shows the $q$ which required resulted in the minimum error after $100$ operator evaluations.

For \adaeg and \adapeg, we tested $9$ values for the line-search scaling parameter $\rho$ log uniformly spread between $2.01$ (close to our minimum permitted value of $2$) and $100$. As we did with \cite{pmlr-v235-vankov24a}'s method, Figure~\ref{fig:minimax} in the main text shows the $\rho$ which resulted in the minimum error after $100$ operator evaluations for the strongly monotone problem and after $3000$ evaluations for the monotone problem. We set $\nu$ near the maximal admissible values according to Corollaries~\ref{cor: adaeg convergence app} and \ref{cor: adapeg convergence app} ($0.567$ for \adaeg and $0.3517$ for \adapeg).

For \algname{aGRAAL}, the primary hyperparameter that determines the algorithmic performance is the initial stepsize $\lambda_0$. We follow the recommendation of \cite{malitsky_golden_2020} by making a small perturbation $x'_0$ of the initial point $x_0$ and taking $\lambda_0=\frac{\|x_0'-x_0\|}{\|F(x_0')-F(x)\|}$. For our specific instantiation, we take $x_0'=P_U(0.95x_0)$. 

In Figures~\ref{fig:monotone tuning} and \ref{fig:sm tuning} below, we plot the full results of our hyperparameter tuning. As is seen in Figure~\ref{fig:monotone tuning}, in the monotone setting, \pegls consistently outperforms \algname{aGRAAL} with the recommended initial stepsize choice from~\cite{malitsky_golden_2020}. In Figure~\ref{fig:sm tuning}, we see that \pegls with $\rho<10$ has comparable performance to \algname{EG Backtracking} \citep{pmlr-v235-vankov24a}  across its full range of $q$ values. At larger $\rho$, the performance of \pegls deteriorates somewhat. For practitioners, we would generally recommend $\rho<10$ so that the line-search can more finely explore different estimates of $L_0$ and $L_1$, and this recommendation is supported by our experiments.

\begin{figure}
    \centering
    \includegraphics[width=0.4\linewidth]{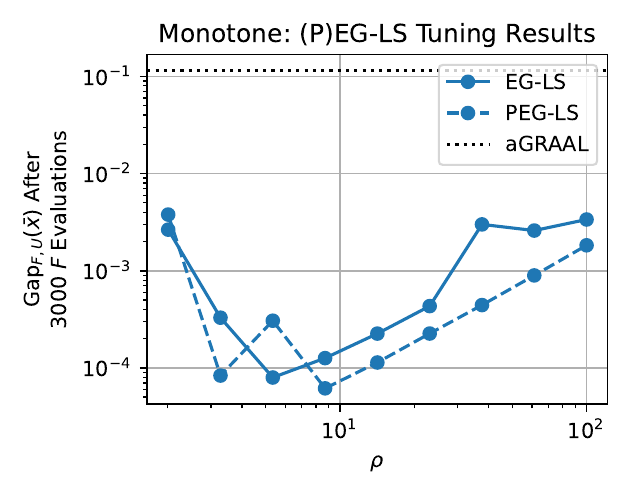}
    \caption{In Figure \ref{fig:monotone tuning}, we show the final values of $\texttt{Gap}_{F,U}(\xbark)$ from our tuning of the line-search constant $\rho$ for the monotone experiment with \adaeg and \adapeg.}
    \label{fig:monotone tuning}
\end{figure}

\begin{figure}
    \centering
    \includegraphics[width=0.7\linewidth]{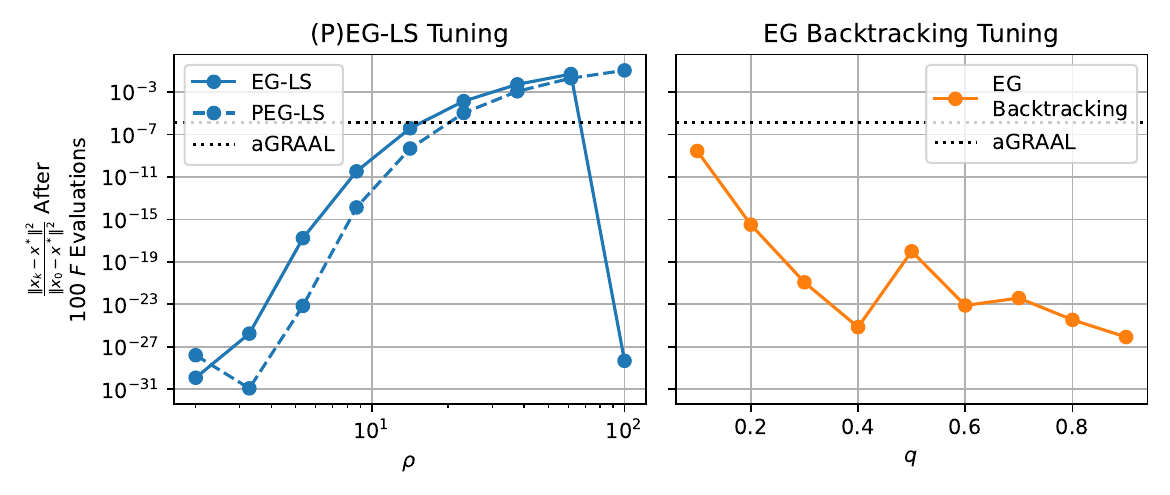}
    \caption{In Figure \ref{fig:sm tuning}, we show the final values of $\frac{\|x_K-x^*\|^2}{\|x_0-x^*\|^2}$ from our tuning of the line-search constant $\rho$ for \adaeg and \adapeg and $q$ for \algname{EG Backtracking} \citep{pmlr-v235-vankov24a} for our strongly monotone experiment.}
    \label{fig:sm tuning}
\end{figure}

\textbf{Determination of $x^*$}
For the determination of $x^*$ for the strongly monotone convergence measure, we ran each of \adaeg, \adapeg, and \cite{pmlr-v235-vankov24a}'s \eg method with backtracking for $1000$ iterations. At this point, the vector norm of the pairwise differences between $x^*_{\adaeg}$, $x^*_\adapeg$, and $x^*_{\algname{EG\,Backtracking}}$ agree to within $1.2*10^{-15}$, indicating the algorithms have fully converged to the same unique solution.

\end{document}